\numberwithin{equation}{section}
\titleformat{\subsubsection}[runin]{\normalfont \normalsize \bfseries
\justifyheading}{\thesubsubsection.}{0em}{}
\DeclareMathAlphabet{\mathpzc}{OT1}{pzc}{m}{it}
\newtheorem{cor}[subsubsection]{Corollary}
\newtheorem{lem}[subsubsection]{Lemma}
\newtheorem{prop}[subsubsection]{Proposition}
\newtheorem{conj}[subsubsection]{Conjecture}
\newtheorem{thm}[subsubsection]{Theorem}
\theoremstyle{remark}
\newtheorem{rem}[subsubsection]{Remark}
\newtheorem{example}[subsubsection]{Example}
\newcommand{\nc}{\newcommand}
\nc{\renc}{\renewcommand}
\nc{\ssec}{\subsection}
\nc{\sssec}{\subsubsection}
\nc{\on}{\operatorname}
\nc\ol{\overline}
\nc\wt{\widetilde}
\nc\tboxtimes{\wt{\boxtimes}}
\nc\tstar{\wt{\star}}
\nc{\alp}{\alpha}
\nc{\ZZ}{{\mathbb Z}}
\nc{\NN}{{\mathbb N}}
\nc{\OO}{{\mathbb O}}
\renc{\SS}{{\mathbb S}}
\nc{\DD}{{\mathbb D}}
\nc{\GG}{{\mathbb G}}
\renewcommand{\AA}{{\mathbb A}}
\nc{\Fq}{{\mathbb F}_q}
\nc{\Fqb}{\ol{{\mathbb F}_q}}
\nc{\Ql}{\ol{{\mathbb Q}_\ell}}
\nc{\id}{\text{id}}
\nc\X{\mathcal X}
\nc{\Hom}{\on{Hom}}
\nc{\Lie}{\on{Lie}}
\nc{\Loc}{\on{Loc}}
\nc{\Pic}{\on{Pic}}
\nc{\Bun}{\on{Bun}}
\nc{\IC}{\on{IC}}
\nc{\Aut}{\on{Aut}}
\nc{\rk}{\on{rk}}
\nc{\Sh}{\on{Sh}}
\nc{\Perv}{\on{Perv}}
\nc{\pos}{{\on{pos}}}
\nc{\Conv}{\on{Conv}}
\nc{\Sph}{\on{Sph}}
\nc{\Sym}{\on{Sym}}
\nc{\BunBb}{\overline{\Bun}_B}
\nc{\BunNb}{\overline{\Bun}_N}
\nc{\BunTb}{\overline{\Bun}_T}
\nc{\BunBbm}{\overline{\Bun}_{B^-}}
\nc{\BunBbel}{\overline{\Bun}_{B,el}}
\nc{\BunBbmel}{\overline{\Bun}_{B^-,el}}
\nc{\Buno}{\overset{o}{\Bun}}
\nc{\BunPb}{{\overline{\Bun}_P}}
\nc{\BunBM}{\Bun_{B(M)}}
\nc{\BunBMb}{\overline{\Bun}_{B(M)}}
\nc{\BunPbw}{{\widetilde{\Bun}_P}}
\nc{\BunBP}{\widetilde{\Bun}_{B,P}}
\nc{\GUb}{\overline{G/U}}
\nc{\GUPb}{\overline{G/U(P)}}
\nc\syminfty{\on{Sym}^{\infty}}
\nc\lal{\ol{\lambda}}
\nc\xl{\ol{x}}
\nc\thl{\ol{\theta}}
\nc\nul{\ol{\nu}}
\nc\mul{\ol{\mu}}
\nc{\oX}{\overset{\circ}{X}{}}
\nc{\hl}{\overset{\leftarrow}h{}}
\nc{\hr}{\overset{\rightarrow}h{}}
\nc{\M}{{\mathcal M}}
\nc{\N}{{\mathcal N}}
\nc{\F}{{\mathcal F}}
\renc{\D}{{\mathcal D}}
\nc{\Y}{{\mathcal Y}}
\nc{\G}{{\mathcal G}}
\nc{\E}{{\mathcal E}}
\nc{\CalC}{{\mathcal C}}
\nc\Dh{\widehat{\D}}
\renewcommand{\O}{{\mathcal O}}
\nc{\K}{{\mathcal K}}
\renewcommand{\S}{\bbS}
\nc{\T}{{\mathcal T}}
\nc{\V}{{\mathcal V}}
\renc{\P}{{\mathcal P}}
\nc{\A}{{\AA}}
\nc{\B}{{\BB}}
\nc{\U}{{\mathcal U}}
\nc{\frn}{{\check{\mathfrak u}(P)}}
\nc{\fC}{\mathfrak C}
\nc\f{{\mathfrak f}}
\nc{\qo}{{\mathfrak q}}
\nc{\po}{{\mathfrak p}}
\nc{\s}{{\mathfrak s}}
\nc\w{\text{w}}
\renewcommand{\r}{{\mathfrak r}}
\renewcommand{\mod}{{\on{-}\mathsf{mod}}}
\nc\Spec{\on{Spec}}
\nc\Mod{\on{Mod}}
\nc{\tw}{\widetilde{\mathfrak t}}
\nc{\pw}{\widetilde{\mathfrak p}}
\nc{\qw}{\widetilde{\mathfrak q}}
\nc{\jw}{\widetilde j}
\nc{\grb}{\overline{\Gr_{X^{\fset}}}}
\nc{\I}{\mathcal I}
\renewcommand{\i}{\mathfrak i}
\renewcommand{\j}{\mathfrak j}
\nc{\lambdach}{{\check\lambda}}
\nc{\Lambdach}{{\check\Lambda}{}}
\nc{\much}{{\check\mu}}
\nc{\omegach}{{\check\omega}}
\nc{\nuch}{{\check\nu}}
\nc{\etach}{{\check\eta}}
\nc{\alphach}{{\check\alpha}}
\nc{\rhoch}{{\check\rho}}
\nc{\Hb}{\overline{\H}}
\nc{\BA}{{\mathbb{A}}}
\nc{\BB}{\mathbb{B}}
\nc{\BC}{{\mathbb{C}}}
\nc{\BD}{{\mathbb{D}}}
\nc{\BE}{{\mathbb{E}}}
\nc{\BF}{{\mathbb{F}}}
\nc{\BG}{{\mathbb{G}}}
\nc{\BH}{{\mathbb{H}}}
\nc{\BI}{{\mathbb{I}}}
\nc{\BM}{{\mathbb{M}}}
\nc{\BN}{{\mathbb{N}}}
\nc{\BO}{{\mathbb{O}}}
\nc{\BP}{{\mathbb{P}}}
\nc{\BQ}{{\mathbb{Q}}}
\nc{\BR}{{\mathbb{R}}}
\nc{\BS}{{\mathbb{S}}}
\nc{\BT}{{\mathbb{T}}}
\nc{\BV}{{\mathbb{V}}}
\nc{\BZ}{{\mathbb{Z}}}
\nc{\bbone}{\mathbbm{1}}
\nc{\bbA}{{\mathbb{A}}}
\nc{\bbB}{\mathbb{B}}
\nc{\bbC}{{\mathbb{C}}}
\nc{\bbD}{{\mathbb{D}}}
\nc{\bbE}{{\mathbb{E}}}
\nc{\bbF}{{\mathbb{F}}}
\nc{\bbG}{{\mathbb{G}}}
\nc{\bbH}{{\mathbb{H}}}
\nc{\bbI}{{\mathbb{I}}}
\nc{\bbL}{{\mathbb{L}}}
\nc{\bbM}{{\mathbb{M}}}
\nc{\bbN}{{\mathbb{N}}}
\nc{\bbO}{{\mathbb{O}}}
\nc{\bbP}{{\mathbb{P}}}
\nc{\bbQ}{{\mathbb{Q}}}
\nc{\bbR}{{\mathbb{R}}}
\nc{\bbS}{{\mathbb{S}}}
\nc{\bbT}{{\mathbb{T}}}
\nc{\bbU}{{\mathbb{U}}}
\nc{\bbV}{{\mathbb{V}}}
\nc{\bbW}{{\mathbb{W}}}
\nc{\bbX}{{\mathbb{X}}}
\nc{\bbY}{{\mathbb{Y}}}
\nc{\bbZ}{{\mathbb{Z}}}
\nc{\CA}{{\mathcal{A}}}
\nc{\CB}{{\mathcal{B}}}
\nc{\CE}{{\mathcal{E}}}
\nc{\CF}{{\mathcal{F}}}
\nc{\CH}{{\mathcal{H}}}
\nc{\CL}{{\mathcal{L}}}
\nc{\CC}{{\mathcal{C}}}
\nc{\CG}{{\mathcal{G}}}
\nc{\CM}{{\mathcal{M}}}
\nc{\CN}{{\mathcal{N}}}
\nc{\CK}{{\mathcal{K}}}
\nc{\CO}{{\mathcal{O}}}
\nc{\CP}{{\mathcal{P}}}
\nc{\CQ}{{\mathcal{Q}}}
\nc{\CR}{{\mathcal{R}}}
\nc{\CS}{{\mathcal{S}}}
\nc{\CU}{{\mathcal{U}}}
\nc{\CV}{{\mathcal{V}}}
\nc{\CW}{{\mathcal{W}}}
\nc{\CX}{{\mathcal{X}}}
\nc{\CY}{{\mathcal{Y}}}
\nc{\CZ}{{\mathcal{Z}}}
\nc{\CI}{{\mathcal{I}}}
\nc{\csM}{{\check{\mathcal A}}{}}
\nc{\oM}{{\overset{\circ}{\mathcal M}}{}}
\nc{\obM}{{\overset{\circ}{\mathbf M}}{}}
\nc{\oCA}{{\overset{\circ}{\mathcal A}}{}}
\nc{\obA}{{\overset{\circ}{\mathbf A}}{}}
\nc{\ooM}{{\overset{\circ}{M}}{}}
\nc{\osM}{{\overset{\circ}{\mathsf M}}{}}
\nc{\vM}{{\overset{\bullet}{\mathcal M}}{}}
\nc{\nM}{{\underset{\bullet}{\mathcal M}}{}}
\nc{\oD}{{\overset{\circ}{\mathcal D}}{}}
\nc{\obC}{{\overset{\circ}{\mathbf C}}{}}
\nc{\obD}{{\overset{\circ}{\mathbf D}}{}}
\nc{\oA}{{\overset{\circ}{\mathbb A}}{}}
\nc{\op}{{\overset{\bullet}{\mathbf p}}{}}
\nc{\oU}{{\overset{\bullet}{\mathcal U}}{}}
\nc{\oZ}{{\overset{\circ}{\mathcal Z}}{}}
\nc{\ofZ}{{\overset{\circ}{\mathfrak Z}}{}}
\nc{\oF}{{\overset{\circ}{\fF}}}
\nc{\fa}{{\mathfrak{a}}}
\nc{\fb}{{\mathfrak{b}}}
\nc{\fc}{{\mathfrak{c}}}
\nc{\fd}{{\mathfrak{d}}}
\nc{\ff}{{\mathfrak{f}}}
\nc{\fg}{{\mathfrak{g}}}
\nc{\fgl}{{\mathfrak{gl}}}
\nc{\fh}{{\mathfrak{h}}}
\nc{\fj}{{\mathfrak{j}}}
\nc{\fl}{{\mathfrak{l}}}
\nc{\fm}{{\mathfrak{m}}}
\nc{\fn}{{\mathfrak{n}}}
\nc{\fu}{{\mathfrak{u}}}
\renc{\u}{\fu}
\nc{\fp}{{\mathfrak{p}}}
\nc{\fr}{{\mathfrak{r}}}
\nc{\fs}{{\mathfrak{s}}}
\nc{\ft}{{\mathfrak{t}}}
\nc{\fz}{{\mathfrak{z}}}
\nc{\fsl}{{\mathfrak{sl}}}
\nc{\hsl}{{\widehat{\mathfrak{sl}}}}
\nc{\hgl}{{\widehat{\mathfrak{gl}}}}
\nc{\hg}{{\widehat{\mathfrak{g}}}}
\nc{\chg}{{\widehat{\mathfrak{g}}}{}^\vee}
\nc{\hn}{{\widehat{\mathfrak{n}}}}
\nc{\chn}{{\widehat{\mathfrak{n}}}{}^\vee}
\nc{\fA}{{\mathfrak{A}}}
\nc{\fB}{{\mathfrak{B}}}
\nc{\fD}{{\mathfrak{D}}}
\nc{\fE}{{\mathfrak{E}}}
\nc{\fF}{{\mathfrak{F}}}
\nc{\fG}{{\mathfrak{G}}}
\nc{\fK}{{\mathfrak{K}}}
\nc{\fL}{{\mathfrak{L}}}
\nc{\fM}{{\mathfrak{M}}}
\nc{\fN}{{\mathfrak{N}}}
\nc{\fP}{{\mathfrak{P}}}
\nc{\fU}{{\mathfrak{U}}}
\nc{\fV}{{\mathfrak{V}}}
\nc{\fX}{{\mathfrak{X}}}
\nc{\fY}{{\mathfrak{Y}}}
\nc{\fZ}{{\mathfrak{Z}}}
\nc{\bb}{{\mathbf{b}}}
\nc{\bc}{{\mathbf{c}}}
\nc{\bd}{{\mathbf{d}}}
\nc{\bbf}{{\mathbf{f}}}
\nc{\be}{{\mathbf{e}}}
\nc{\bg}{{\mathbf{g}}}
\nc{\bi}{{\mathbf{i}}}
\nc{\bj}{{\mathbf{j}}}
\nc{\bn}{{\mathbf{n}}}
\nc{\bo}{{\mathbf{o}}}
\nc{\bp}{{\mathbf{p}}}
\nc{\bq}{{\mathbf{q}}}
\nc{\bt}{{\mathbf{t}}}
\nc{\bu}{{\mathbf{u}}}
\nc{\bv}{{\mathbf{v}}}
\nc{\bx}{{\mathbf{x}}}
\nc{\bs}{{\mathbf{s}}}
\nc{\by}{{\mathbf{y}}}
\nc{\bw}{{\mathbf{w}}}
\nc{\bA}{{\mathbf{A}}}
\nc{\bK}{{\mathbf{K}}}
\nc{\bB}{{\mathbf{B}}}
\nc{\bC}{{\mathbf{C}}}
\nc{\bG}{{\mathbf{G}}}
\nc{\bD}{{\mathbf{D}}}
\nc{\bH}{{\mathbf{H}}}
\nc{\bM}{{\mathbf{M}}}
\nc{\bN}{{\mathbf{N}}}
\nc{\bO}{{\mathbf{O}}}
\nc{\bT}{{\mathbf{T}}}
\nc{\bV}{{\mathbf{V}}}
\nc{\bW}{{\mathbf{W}}}
\nc{\bX}{{\mathbf{X}}}
\nc{\bZ}{{\mathbf{Z}}}
\nc{\bS}{{\mathbf{S}}}
\nc{\sA}{{\mathsf{A}}}
\nc{\sB}{{\mathsf{B}}}
\nc{\sC}{{\mathsf{C}}}
\nc{\sD}{{\mathsf{D}}}
\nc{\sF}{{\mathsf{F}}}
\nc{\sG}{{\mathsf{G}}}
\nc{\sK}{{\mathsf{K}}}
\nc{\sM}{{\mathsf{M}}}
\nc{\sO}{{\mathsf{O}}}
\nc{\sW}{{\mathsf{W}}}
\nc{\sQ}{{\mathsf{Q}}}
\nc{\sP}{{\mathsf{P}}}
\nc{\sV}{{\mathsf{V}}}
\nc{\sS}{{\mathsf{S}}}
\nc{\sT}{{\mathsf{T}}}
\nc{\sZ}{{\mathsf{Z}}}
\nc{\sfp}{{\mathsf{p}}}
\nc{\sll}{{\mathsf{l}}}
\nc{\sr}{{\mathsf{r}}}
\nc{\bk}{{\mathsf{k}}}
\nc{\sg}{{\mathsf{g}}}
\nc{\sff}{{\mathsf{f}}}
\nc{\sfb}{{\mathsf{b}}}
\nc{\sfc}{{\mathsf{c}}}
\nc{\sd}{{\mathsf{d}}}
\nc{\se}{{\mathsf{e}}}
\nc{\BK}{{\bar{K}}}
\nc{\tA}{{\widetilde{\mathbf{A}}}}
\nc{\tB}{{\widetilde{\mathcal{B}}}}
\nc{\tg}{{\widetilde{\mathfrak{g}}}}
\nc{\tG}{{\widetilde{G}}}
\nc{\TM}{{\widetilde{\mathbb{M}}}{}}
\nc{\tO}{{\widetilde{\mathsf{O}}}{}}
\nc{\tU}{{\widetilde{\mathfrak{U}}}{}}
\nc{\TZ}{{\tilde{Z}}}
\nc{\tx}{{\tilde{x}}}
\nc{\tbv}{{\tilde{\bv}}}
\nc{\tfP}{{\widetilde{\mathfrak{P}}}{}}
\nc{\tz}{{\tilde{\zeta}}}
\nc{\tmu}{{\tilde{\mu}}}
\nc{\urho}{\underline{\rho}}
\nc{\uB}{\underline{B}}
\nc{\uC}{{\underline{\mathbb{C}}}}
\nc{\ui}{\underline{i}}
\nc{\uj}{\underline{j}}
\nc{\ofP}{{\overline{\mathfrak{P}}}}
\nc{\oB}{{\overline{\mathcal{B}}}}
\nc{\og}{{\overline{\mathfrak{g}}}}
\nc{\oI}{{\overline{I}}}
\nc{\eps}{\varepsilon}
\nc{\hrho}{{\hat{\rho}}}
\nc{\one}{{\mathbf{1}}}
\nc{\two}{{\mathbf{t}}}
\nc{\Rep}{{\mathop{\operatorname{\rm Rep}}}}
\nc{\Tot}{{\mathop{\operatorname{\rm Tot}}}}
\nc{\Ker}{{\mathop{\operatorname{\rm Ker}}}}
\nc{\Hilb}{{\mathop{\operatorname{\rm Hilb}}}}
\nc{\Ext}{{\mathop{\operatorname{\rm Ext}}}}
\nc{\CHom}{{\mathop{\operatorname{{\mathcal{H}}\it om}}}}
\nc{\GL}{{\mathop{\operatorname{\rm GL}}}}
\nc{\gr}{{\mathop{\operatorname{\rm gr}}}}
\nc{\Id}{{\mathop{\operatorname{\rm Id}}}}
\nc{\de}{{\mathop{\operatorname{\rm def}}}}
\nc{\length}{{\mathop{\operatorname{\rm length}}}}
\nc{\supp}{{\mathop{\operatorname{\rm supp}}}}
\nc{\Cliff}{{\mathsf{Cliff}}}
\nc{\Fl}{\on{Fl}}
\nc{\Fib}{{\mathsf{Fib}}}
\nc{\Coh}{{\on{Coh}}}
\nc{\QCoh}{{\on{QCoh}}}
\nc{\IndCoh}{{\on{IndCoh}}}
\nc{\FCoh}{{\mathsf{FCoh}}}
\nc{\reg}{{\text{\rm reg}}}
\nc{\cplus}{{\mathbf{C}_+}}
\nc{\cminus}{{\mathbf{C}_-}}
\nc{\cthree}{{\mathbf{C}_*}}
\nc{\Qbar}{{\bar{Q}}}
\nc\Eis{\on{Eis}}
\nc\Eisb{\ol\Eis{}}
\nc\Eisr{\on{Eis}^{rat}{}}
\nc\wh{\widehat}
\nc{\Def}{\on{Def_{\check{\fb}}(E)}}
\nc{\barZ}{\overline{Z}{}}
\nc{\barbarZ}{\overline{\barZ}{}}
\nc{\barpi}{\overline\pi}
\nc{\barbarpi}{\overline\barpi}
\nc{\barpip}{\overline\pi{}^+}
\nc{\barpim}{\overline\pi{}^-}
\nc{\fq}{\mathfrak q}
\nc{\fqb}{\ol{\fq}{}}
\nc{\fpb}{\ol{\fp}{}}
\nc{\fpr}{{\fp^{rat}}{}}
\nc{\fqr}{{\fq^{rat}}{}}
\nc{\hattimes}{\wh\otimes}
\nc{\bh}{{\bar{h}}}
\nc{\bOmega}{{\overline{\Omega(\check \fn)}}}
\nc{\seq}[1]{\stackrel{#1}{\sim}}
\nc{\cT}{{\check{T}}}
\nc{\cG}{{\check{G}}}
\nc{\cM}{{\check{M}}}
\nc{\cB}{{\check{B}}}
\nc{\cP}{{\check{P}}}
\nc{\ct}{{\check{\mathfrak t}}}
\nc{\cg}{{\check{\fg}}}
\nc{\cb}{{\check{\fb}}}
\nc{\cn}{{\check{\fn}}}
\nc{\cp}{{\check{\fp}}}
\nc{\cm}{{\check{\fm}}}
\nc{\cLambda}{{\check\Lambda}}
\nc{\cla}{{\check\lambda}}
\nc{\cmu}{{\check\mu}}
\nc{\cnu}{{\check\nu}}
\nc{\ceta}{{\check\eta}}
\nc{\DefbE}{{\on{Def}_{\cB}(E_\cT)}}
\nc{\imathb}{{\ol{\imath}}}
\nc{\rlr}{\overset{\longrightarrow}{\underset{\longrightarrow}\longleftarrow}}
\nc{\oBun}{\overset{\circ}\Bun}
\nc{\BunBbb}{\ol{\ol{Bun}}_B}
\nc{\BunBr}{\Bun_B^{rat}}
\nc{\BunBrsg}{\Bun_B^{rat,\on{s.g.}}}
\nc{\BunBrp}{\Bun_B^{rat,polar}}
\nc{\BunBrpbg}{\Bun_B^{rat,polar,\on{b.g.}}}
\nc{\BunBrpsg}{\Bun_B^{rat,polar,\on{s.g.}}}
\nc{\BunTrp}{\Bun_T^{rat,polar}}
\nc{\BunTrpbg}{\Bun_T^{rat,polar,\on{b.g.}}}
\nc{\BunTrpsg}{\Bun_T^{rat,polar,\on{s.g.}}}
\nc{\BunNr}{\Bun_N^{rat}}
\nc{\BunNre}{\Bun_N^{enh,rat}}
\nc{\BunTr}{\Bun_T^{rat}}
\nc{\Vect}{\on{Vect}}
\nc{\Whit}{\on{Whit}}
\nc{\CT}{\on{CT}}
\nc{\bTb}{\ol{\on{CT}}}
\nc{\bTr}{\on{CT}^{rat}{}}
\nc\jmathr{\jmath^{rat}{}}
\nc{\ux}{\underline{x}}
\nc{\clambda}{{\check\lambda}}
\nc{\calpha}{{\check\alpha}}
\nc{\inftyGrpd}{{\mathsf{Grpd}_\infty}}
\nc{\fset}{\mathsf{fSet}}
\nc{\LocSysG}{\LocSys_{\cG}}
\nc{\Sing}{{\on{Sing}}}
\nc{\dr}{{\on{dR}}}
\nc{\Ind}{\on{Ind}}
\nc{\Sat}{\on{Sat}}
\nc{\Ho}{\on{Ho}}
\nc{\Res}{\on{Res}}
\nc{\sotimes}{\overset{!}\otimes}
\nc{\mmod}{{\on{-}}{\mathbf{mod}}}
\nc{\Maps}{\on{Maps}}
\nc{\CMaps}{{\mathcal Maps}}
\nc{\bMaps}{{\mathbf{Maps}}}
\nc{\dgSch}{\on{DGSch}}
\nc{\dgindSch}{\on{DGindSch}}
\nc{\indSch}{\on{indSch}}
\nc{\Sch}{\mathsf{Sch}}
\nc{\affdgSch}{\on{DGSch}^{\on{aff}}}
\nc{\affSch}{\on{Sch}^{\on{aff}}}
\nc{\Groupoids}{\on{Grpd}}
\nc{\inftypic}{\infty\on{-PicGrpd}}
\nc{\inftyCat}{{\mathsf{Cat}_{\infty}}}
\nc{\MoninftyCat}{\infty\on{-Cat}^{Mon}}
\nc{\SymMoninftyCat}{\infty\on{-Cat}^{\on{SymMon}}}
\nc{\SymMonStinftyCat}{\on{DGCat}^{\on{SymMon}}}
\nc{\MonStinftyCat}{\on{DGCat}^{Mon}}
\nc{\inftystack}{\on{Stk}}
\nc{\inftystackalg}{Stk^{1\text{-}alg}}
\nc{\inftyprestack}{\on{PreStk}}
\nc{\inftydgnearstack}{\on{NearStk}}
\nc{\inftydgstack}{\on{Stk}}
\nc{\inftydgstackalg}{DGStk^{1\text{-}alg}}
\nc{\inftydgprestack}{\on{PreStk}}
\nc{\HC}{\CH\bC}
\nc{\csupp}{\supp}
\nc{\Arth}{\on{Arth}}
\nc{\ArthG}{{\on{Arth}_\cG}}
\nc{\ul}{\underline}
\nc{\rank}{\mathrm{rank}}
\nc{\Z}{\mathcal{Z}}
\nc{\calN}{\N}
\nc{\calW}{\mathcal{W}}
\nc{\calF}{\mathcal{F}}
\nc{\calH}{\mathcal{H}}
\nc{\calO}{\mathcal{O}}
\nc{\calK}{\mathcal{K}}
\nc{\Ran}{\mathsf{Ran}}
\nc{\Jets}{\on{Jets}}
\nc{\act}{\mathsf{act}}
\nc{\Av}{\mathsf{Av}}
\nc{\Ad}{\on{Ad}}
\nc{\BGRan}{BG_{\Ran}}
\nc{\colim}{\on{colim}}
\nc{\codim}{\on{codim}}
\nc{\cpt}{{\on{cpt}}}
\nc{\dR}{{\on{dR}}}
\nc{\DGCat}{\mathsf{DGCat}}
\nc{\DGCatcont}{\on{DGCat}_{cont}}
\nc{\glob}{{\on{glob}}}
\nc{\loc}{{\on{loc}}}
\renewcommand{\op}{{\on{op}}}
\nc{\pt}{{\on{pt}}}
\nc{\PreStk}{{\mathsf{PreStk}}}
\nc{\Cat}{{\mathsf{Cat}}}
\nc{\ShvCat}{{\mathsf{ShvCat}}}
\nc{\restr}[2]{\left. #1 \right |_{#2}}
\nc{\uprestr}[2]{\left. #1 \right |^{#2}}
\nc{\bLoc}{{\mathbf{Loc}}}
\nc{\bGamma}{{\mathbf{\Gamma}}}
\nc{\bLocA}{\mathbf{Loc}^\A}
\nc{\bGammaA}{\mathbf{\Gamma}^\A}
\nc{\bLocB}{\mathbf{Loc}^\B}
\nc{\bGammaB}{\mathbf{\Gamma}^\B}
\nc{\bLocH}{\mathbf{Loc}^\H}
\nc{\bGammaH}{\mathbf{\Gamma}^\H}
\nc{\gen}{\mathsf{gen}}
\nc{\hto}{\hookrightarrow}
\nc{\ext}{\mathsf{ext}}
\nc{\ev}{\mathsf{ev}}
\nc{\ins}{\mathsf{ins}}
\nc{\rat}{\mathsf{rat}}
\nc{\usotimes}[1]{\underset{#1}{\otimes}}
\nc{\ustimes}[1]{\underset{#1}{\times}}
\nc{\uscolim}[1]{\underset{#1}{\colim}}
\nc{\ch}{{\mathfrak{ch}}}
\renc{\fD}{{\Dmod}}
\nc{\fH}{{\mathfrak{H}}}
\nc{\p}{{\mathfrak{p}}}
\renc{\r}{{\mathfrak{r}}}
\nc{\xto}{\xrightarrow}
\renc{\sec}{\section}
\nc{\enh}{\mathsf{enh}}
\renc{\gen}{\mathsf{gen}}
\nc{\BunGBgen}{\Bun_G^{B-\gen}}
\nc{\BunGHgen}{\Bun_G^{H-\gen}}
\nc{\BunGNgen}{\Bun_G^{N-\gen}}
\nc{\Fun}{\mathsf{Fun}}
\nc{\End}{\mathsf{End}}
\nc{\lr}{\xymatrix{ \ar@<-0.4ex>[r] \ar@<.5ex>[l]  & } }
\nc{\rr}{\xymatrix{ \ar@<-0.2ex>[r] \ar@<.7ex>[r]  & } }
\nc{\rrr}{\xymatrix{ \ar@<.0ex>[r] \ar@<.7ex>[r] \ar@<-0.7ex>[r] & } }
\nc{\Stab}{\mathsf{Stab}}
\nc{\Orb}{\mathsf{Orb}}
\renc{\exp}{\mathit{exp}}
\renc{\q}{\mathfrak{q}}
\nc{\virg}[1]{``#1"}
\renc{\bold}[1]{\boldsymbol{#1}}
\nc{\bigt}[1]{\big( #1 \big) }
\nc{\Bigt}[1]{\Big( #1 \Big) }
\nc{\extwhit}{{\CW h}(G,\mathsf{ext})}
\nc{\footcite}{\footnote}
\nc{\GA}{{G(\AA)}}
\nc{\GO}{{G(\OO)}}
\nc{\Shv}{\mathsf{Shv}}
\nc{\inc}{\mathsf{inc}}
\nc{\Par}{\mathsf{Par}}
\renc{\i}{\mathfrak{i}}
\nc{\NA}{N(\AA)}
\nc{\VA}{V(\AA)}
\nc{\Glue}{\mathsf{Glue}}
\nc{\Glued}{\mathsf{Glued}}
\nc{\laxlim}{\text{laxlim}}
\nc{\FT}{\mathsf{FT}}
\nc{\out}{\mathsf{out}}
\nc{\hol}{\mathsf{hol}}
\nc{\Hol}{\on{Hol}}
\nc{\add}{\mathsf{add}}
\nc{\sto}{\rightsquigarrow}
\nc{\squigto}{\rightsquigarrow}
\nc{\fW}{\mathfrak{W}}
\nc{\vrho}{\varrho}
\nc{\counit}{\mathsf{counit}}
\nc{\unit}{\mathsf{unit}}
\nc{\corr}{\mathsf{corr}}
\nc{\Corr}{\mathsf{Corr}}
\nc{\opCorr}{\mathsf{opCorr}}
\nc{\IndSch}{\mathsf{IndSch}}
\nc{\Tate}{{\mathsf{Tate}}}
\nc{\surjto}{\twoheadrightarrow}
\renc{\j}{\mathfrak{j}}
\nc{\J}{\mathcal{J}}
\nc{\pro}{\mathsf{pro}}
\nc{\fty}{\mathsf{ft}}
\nc{\Pro}{\mathsf{Pro}}
\nc{\coact}{\mathsf{coact}}
\nc{\aff}{\mathsf{aff}}
\nc{\Nilp}{\on{Nilp}}
\nc{\Gch}{{\check{G}}}
\nc{\Bch}{{\check{B}}}
\nc{\Tch}{{\check{T}}}
\nc{\Pch}{{\check{P}}}
\nc{\Mch}{{\check{M}}}
\nc{\Qch}{{\check{Q}}}
\nc{\LL}{\mathbb{L}}
\nc{\LS}{{\on{LS}}}
\nc{\x}{\varkappa} 
\nc{\Otimes}{\boldsymbol{\otimes}}
\nc{\Times}{\boldsymbol{\times}}
\nc{\flip}{\text{<}}
\nc{\coeffRan}{\mathsf{coeff}^{\Ran}}
\nc{\Ha}{H(\sA)}
\nc{\Groups}{\mathsf{Groups}}
\nc{\Groth}{\mathsf{Groth}}
\nc{\rlto}{\rightleftarrows}
\nc{\DGCatRan}{\ShvCatCrys(\Ran)}
\nc{\longto}{\longrightarrow}
\renc{\Jets}{\mathsf{Jets}}
\nc{\mer}{\mathsf{mer}}
\nc{\W}{\mathcal{W}}
\nc{\Sect}{\mathsf{Sect}}
\renc{\Maps}{\mathsf{Maps}}
\nc{\y}{\mathtt{y}}
\renc{\x}{\mathtt{x}}
\nc{\un}{{\it un}}
\nc{\indep}{\mathsf{indep}}
\nc{\CoAlg}{\mathsf{CoAlg}}
\nc{\coeff}{\mathsf{coeff}}
\nc{\R}{\mathcal{R}}
\renc{\hat}{\widehat}
\nc{\TK}{T(\mathsf{K})} 
\nc{\TtKK}{\Tt(\mathpzc{K})} 
\nc{\TtK}{\Tt(\mathsf{K})} 
\nc{\KK}{\mathpzc{K}}
\nc{\Dmod}{\mathfrak{D}}
\nc{\curs}[1]{\mathpzc{#1}}
\nc{\Bshv}{\bold{\B}}
\nc{\Bind}{\H_{\indep}}
\nc{\BRan}{\H_{\Ran}}
\nc{\ARan}{\A_{\Ran}}
\nc{\Aind}{\A_{\indep}}
\nc{\GrRan}{\Gr}
\nc{\Gr}{\mathsf{Gr}}
\nc{\GrGRan}{\Gr_{G}}
\nc{\GrGind}{\Gr_{G}^{\indep}}
\nc{\Grind}[1]{\Gr_{#1}^{\indep} }
\nc{\GrGdom}{\curs{Gr}_G}
\nc{\GMapsRan}[1]{\mathsf{GMaps}(X,{#1})}
\nc{\GSectRan}[1]{\mathsf{GSect}({#1}/X)}
\nc{\GMapsind}[1]{\mathsf{GMaps}(X,{#1})^\indep}
\nc{\GSectind}[1]{\mathsf{GSect}({#1}/X)^\indep}
\nc{\GMapsdom}[1]{\curs{GMaps}(X,{#1})}
\nc{\GSectdom}[1]{\curs{GSect}({#1}/X)}
\nc{\chind}{\ch^{\indep}}
\nc{\chdom}{\curs{ch}}
\nc{\QSect}[1]{\curs{QSect}(#1/X)} 
\nc{\QMaps}[1]{\curs{QMaps}(X,#1)} 
\nc{\Zar}{\mathit{Zar}}
\nc{\loccit}{\textit{loc.$\,$cit.}}
\nc{\Crys}{\on{Crys}}
\nc{\ShvCatCrys}{\ShvCat^{\Crys}}
\nc{\BPE}{{\BP E}}
\nc{\BVE}{{\BV E}}
\nc{\BBE}{{\BB E}}
\nc{\Wh}{{{\CW}h}}
\nc{\ChiralCat}{\mathsf{ChiralCat}}
\nc{\RRep}{\mathfrak{R}ep}
\nc{\SSph}{\mathfrak{S}ph}
\nc{\tto}{\twoheadrightarrow}
\nc{\disj}{{\mathsf{disj}}}
\nc{\C}{\CC}
\nc{\good}{\mathsf{good}}
\nc{\triv}{\mathsf{triv}}
\nc{\Alg}{\mathsf{Alg}}
\nc{\CAlg}{\mathsf{CAlg}}
\nc{\Spread}{\mathsf{Spread}}
\nc{\Dom}{\mathsf{Dom}}
\nc{\Jac}{\on{Jac}}
\renc{\CD}[1]{{#1}^{\on{CD}}}
\nc{\String}{\mathsf{String}}
\renc{\min}{{\mathit{min}}}
\nc{\rrep}{\on-\!\mathbf{rep}}
\nc{\WWh}{\mathfrak{W}h}
\nc{\Grpd}{\mathsf{Grpd}}
\nc{\timesdisj}{\overset{\circ}\times}
\renc{\NA}{N(\sA)}
\nc{\chiral}{\mathsf{chiral}}
\nc{\Hopf}{\mathsf{Hopf}}
\nc{\heart}{\heartsuit}
\nc{\kk}{\mathbbm{k}} 
\nc{\HHom}{\CH{om}} 
\nc{\Cone}{\on{Cone}}
\nc{\EE}{\mathbb{E}}
\renc{\HC}{{\on{HC}}}
\nc{\HH}{{\on{HH}}}
\nc{\even}{{\on{even}}}
\nc{\SingSupp}{\on{SingSupp}}
\nc{\Supp}{\on{Supp}}
\nc{\temp}{{\mathsf{temp}}}
\nc{\geom}{{\mathit{geom}}}
\nc{\ren}{{\mathit{ren}}}
\nc{\naive}{{\mathit{naive}}}
\nc{\conaive}{{\mathit{conaive}}}
\nc{\spec}{\mathit{spec}}
\nc{\gch}{\mathfrak{\check{g}}}
\nc{\Hecke}{\on{Hecke}}
\nc{\LSGch}{{\LS_\Gch}}
\nc{\LSBch}{{\LS_\Bch}}
\nc{\LSTch}{{\LS_\Tch}}
\nc{\LSPch}{{\LS_\Pch}}
\nc{\LSMch}{{\LS_\Mch}}
\nc{\LSQch}{{\LS_\Qch}}
\nc{\Hsx}[2]{\H_{{#1} \leftarrow {#2}}}
\nc{\Hdx}[2]{\H_{{#1} \to {#2}}}
\nc{\Hcorr}[3]{ \H_{{#1} \leftarrow {#2} \to {#3}} }
\nc{\Hopcorr}[3]{ \H_{{#1} \to {#2} \leftto {#3}} }
\nc{\ICohsx}[2]{\ICohW_{{#1} \leftarrow {#2}}}
\nc{\ICohdx}[2]{\ICohW_{{#1} \to {#2}}}
\nc{\ICohcorr}[3]{ \ICohW_{{#1} \leftarrow {#2} \to {#3}} }
\nc{\ICohopcorr}[3]{ \ICohW_{{#1} \to {#2} \leftto {#3}} }
\nc{\QCohsx}[2]{\QCohW_{{#1} \leftarrow {#2}}}
\nc{\QCohdx}[2]{\QCohW_{{#1} \to {#2}}}
\nc{\QCohcorr}[3]{ \QCohW_{{#1} \leftarrow {#2} \to {#3}} }
\nc{\QCohopcorr}[3]{ \QCohW_{{#1} \to {#2} \leftto {#3}} }
\renc{\AA}{\bbA}
\nc{\Asx}[2]{\AA_{{#1} \leftarrow {#2}}}
\nc{\Adx}[2]{\AA_{{#1} \to {#2}}}
\nc{\Acorr}[3]{ \AA_{{#1} \leftarrow {#2} \to {#3}} }
\nc{\Aopcorr}[3]{ \AA_{{#1} \to {#2} \leftto {#3}} }
\nc{\Bsx}[2]{\B_{{#1} \leftarrow {#2}}}
\nc{\Bdx}[2]{\B_{{#1} \to {#2}}}
\nc{\Bcorr}[3]{ \B_{{#1} \leftarrow {#2} \to {#3}} }
\nc{\Bopcorr}[3]{ \B_{{#1} \to {#2} \leftto {#3}} }
\nc{\Ssx}[2]{\S_{{#1} \leftarrow {#2}}}
\nc{\Sdx}[2]{\S_{{#1} \to {#2}}}
\nc{\Scorr}[3]{ \S_{{#1} \leftarrow {#2} \to {#3}} }
\nc{\Sopcorr}[3]{ \S_{{#1} \to {#2} \leftto {#3}} }
\nc{\ICohzero}[3]{\ICoh_0 \bigt{#1 \times_{{#2}_\dR} #3}}
\nc{\IndCohzero}{\ICohzero}
\nc{\form}[3]{#1 \times_{{#2}_\dR} #3 }
\nc{\ind}{{\mathsf{ind}}}
\nc{\oblv}{{\mathsf{oblv}}}
\nc{\Aff}{\mathsf{Aff}}
\nc{\dgAff}{\Aff}
\nc{\coev}{\mathsf{coev}}
\nc{\bE}{\mathbf{E}}
\nc{\ShvCatH}{{\ShvCat^{\bbH}}}
\nc{\ShvCatQW}{\ShvCat^{\QCohW}}
\nc{\bbimod}{\on{-}\mathbf{bimod}}
\nc{\Tw}{\mathsf{Tw}}
\nc{\TwTr}{\mathsf{TwTr}}
\nc{\Arr}{\mathsf{Arr}}
\nc{\bDelta}{\bold\Delta}
\nc{\BiCat}{\mathsf{BiCat}}
\nc{\Seg}{\mathsf{Seg}}
\nc{\Cart}{\mathsf{Cart}}
\nc{\Bimod}{\mathsf{Bimod}}
\nc{\lax}{\mathit{lax}}
\nc{\pr}{\mathsf{pr}}
\nc{\zero}{ \{ 0 \}   }
\nc{\Perf}{\mathsf{Perf}}
\nc{\leftto}{\leftarrow}
\nc{\lto}{\leftto}
\nc{\xlto}[1]{\xleftarrow{#1}}
\nc{\ltemp}{{}^\temp}
\nc{\TwCorr}{\mathsf{TwCorr}}
\nc{\Affover}[1]{{\Aff_{/#1}}}
\nc{\Affoverop}[1]{{( \Affover{#1})^\op}}
\nc{\AffOver}[2]{{(\Aff_{#1})_{/#2}}}
\nc{\AffOverop}[2]{{( \AffOver{#1}{#2})^\op}}
\nc{\aft}{{\mathit{aft}}}
\renc{\vert}{{\mathit{vert}}}
\nc{\horiz}{{\mathit{horiz}}}
\nc{\type}{{\mathit{type}}}
\nc{\adm}{{\mathit{adm}}}
\nc{\g}{\mathfrak{g}}
\nc{\free}{\mathsf{free}}
\nc{\Sform}{{S \times_{S_\dR} S}}
\nc{\Yform}{{\Y \times_{\Y_\dR} \Y}}
\nc{\SdR}{ {S_{\dR}}}
\nc{\laft}{{\mathit{laft}}}
\nc{\Affevcocaft}{\Aff_{\aft}^{< \infty}}
\nc{\Affaftevcoc}{\Aff_{\aft}^{< \infty}}
\nc{\Affevcoclfp}{\Aff_{\lfp}^{< \infty}}
\nc{\Schevcoclfp }{\Sch_{\lfp}^{< \infty}}
\nc{\Schevcocaft}{\Sch_{\aft}^{< \infty}}
\nc{\Schaftevcoc}{\Sch_{\aft}^{< \infty}}
\nc{\Stkevcoc}{\Stk^{< \infty}}
\nc{\Stkevcoclfp}{\Stk_{\lfp}^{< \infty}}
\nc{\Stkperfevcoclfp}{\Stk_{\mathit{perf},\lfp}^{< \infty}}
\nc{\Stkperflfp}{\Stk_{\mathit{perf},\lfp}}
\nc{\Stklfp}{\Stk_{\lfp}}
\nc{\evcoc}{\mathit{e.c.}}
\nc{\ICoh}{\IndCoh}
\nc{\citep}{\cite}
\renc{\H}{\bbH}
\nc{\uno}{\mathbbm{1}}
\nc{\CohBig}{{\Coh^{-\infty}}}
\nc{\Tang}{\mathbb{T}}
\nc{\LieAlg}{\mathsf{LieAlg}}
\nc{\Serre}{{\on{Serre}}}
\nc{\MPreStk}{\mathsf{MPreStk}}
\nc{\all}{{\on{all}}}
\nc{\QCohwedge}{\bbQ^\wedge}
\nc{\ICohwedge}{\bbI^\wedge}
\nc{\ICohW}{\ICohwedge}
\nc{\QCohW}{\QCohwedge}
\nc{\ShvCatA}{\ShvCat^{\AA}}
\nc{\ShvCatB}{{\ShvCat^\B}}
\nc{\naiveto}{{\xto{\naive}}}
\nc{\conaiveto}{{\xto{\conaive}}}
\nc{\strong}{\mathsf{strong}}
\nc{\costrong}{\mathit{costrong}}
\nc{\conv}{\mathit{conv}}
\nc{\Q}{\bbQ}
\nc{\bY}{\mathbf{Y}}
\nc{\Loop}{\mathsf{LOOP}}
\nc{\DG}{{\on{DG}}}
\nc{\coind}{\mathsf{coind}}
\nc{\co}{\on{co}}
\nc{\laftdef}{{\mathit{laft-def}}}
\nc{\qsmooth}{{\mathit{qs.smooth}}}
\nc{\smooth}{{\mathit{smooth}}}
\nc{\LKE}{\on{LKE}}
\nc{\RKE}{\on{RKE}}
\nc{\ShvCatAco}{\ShvCatA_{\co}}
\nc{\ShvCatHco}{\ShvCatH_{\co}}
\nc{\Stk}{\mathsf{Stk}}
\nc{\doubleCat}{\mathsf{doubleCat}}
\nc{\Spaces}{\mathcal{S}\!\mathit{paces}}
\nc{\ALG}{\mathsf{ALG}}
\nc{\MAPS}{\mathsf{MAPS}}
\nc{\CAT}{\mathsf{CAT}}
\nc{\oneCat}{{\Cat_{\1}}}
\nc{\oneCAT}{{\CAT_{\1}}}
\nc{\twoCat}{{\Cat_{\2}}}
\nc{\twoCAT}{{\CAT_{\2}}}
\nc{\DGCAT}{\mathsf{DGCAT}}
\nc{\twoCatDG}{{\CAT_{\2}^\DG}}
\nc{\twoCATDG}{{\CAT_{\2}^\DG}}
\nc{\twoCATDGw}{{\CAT_{\2, w*}^\DG}}
\nc{\twoCATDGww}{{\CAT_{\2, ww*}^\DG}}
\nc{\AlgBimod}{\Alg^{\mathit{bimod}}}
\nc{\AlgBimodDGCat}{\AlgBimod(\DGCat)}
\nc{\ALGBimod}{\ALG^{\mathit{bimod}}}
\nc{\twoAlgBimod}{\ALGBimod}
\nc{\rev}{{\on{rev}}}
\nc{\lfp}{{\mathit{lfp}}}
\nc{\RBeck}{{\on{R-BC}}}
\nc{\LBeck}{{\on{L-BC}}}
\nc{\schem}{\mathit{schem}}
\nc{\proper}{\mathit{proper}}
\nc{\res}{{\mathit{res}}}
\nc{\UQCoh}{\U^{\QCoh}}
\nc{\UQ}{\UQCoh}
\nc{\LieAlgbd}{{\on{Lie-algbd}}}
\nc{\LY}{{L\Y}}
\nc{\LZ}{{L\Z}}
\nc{\TangQ}{\Tang^{\QCoh}}
\nc{\Fil}{{\on{Fil}}}
\nc{\AssGr}{\on{assoc-gr}}
\nc{\red}{{\mathit{red}}}
\nc{\Cech}{\on{Cech}}
\nc{\FormMod}{\mathsf{FormMod}}
\nc{\FormModunderStkevcoc} {\FormMod_{\Stk^{< \infty}/}^\lfp }
\nc{\vDmod}{\virg{\Dmod}}
\nc{\shDmod}{\Dmod^\to}
\nc{\HeckeEigen}{\on{Hecke-Eigen}}
\nc{\HE}{\HeckeEigen}
\nc{\bigHE}{{\HE}^\ren}
\nc{\HEAut}{\HE^{\Aut}}
\nc{\bigHEAut}{\HE^{\Aut, \ren}}
\nc{\Spr}{\on{Spr}}
\nc{\LSG}{{\LS_G}}
\nc{\LSP}{{\LS_P}}
\nc{\LSQ}{{\LS_Q}}
\nc{\LSB}{{\LS_B}}
\nc{\LST}{{\LS_T}}
\nc{\LSM}{{\LS_M}}
\nc{\LSGtriv}{{\LS_G^\triv}}
\nc{\LSPtriv}{{\LS_P^\triv}}
\nc{\LSQtriv}{{\LS_Q^\triv}}
\nc{\PP}{{\mathbb{P}}}
\nc{\Gm}{{\GG_m}}
\nc{\sigmatriv}{{\sigma_\triv}}
\nc{\shift}{{\Leftarrow}}
\nc{\unshift}{{\Rightarrow}}
\nc{\PQ}{{Q \subseteq P}}
\nc{\TwPQ}{[\PQ]}
\nc{\twPQ}{\TwPQ}
\nc{\St}{\on{St}}
\nc{\Weyl}{\on{Weyl}}
\nc{\Dol}{{\on{Dol}}}
\nc{\coSing}{\on{coSing}}
\nc{\DN}{\mathfrak{DN}}
\nc{\NTwGlued}{\N_{\Glued} }
\nc{\NGlued}{\N_{\Glued} }
\nc{\Der}{\on{Der}}
\nc{\Prim}{\on{Prim}}
\nc{\alt}{{\mathit{alt}}}
\nc{\Aalt}{A^\alt}
\nc{\cl}{{cl}}
\nc{\unip}{{\mathsf{unip}}}
\nc{\xleftto}{\xleftarrow}
\nc{\xlefto}{\xleftarrow}
\nc{\Nch}{{\check{\N}}}
\nc{\nilp}{{\mathit{nilp}}}
\nc{\QSmooth}{\mathsf{QSmooth}}
\nc{\Nglob}{{\N_{\glob}}}
\nc{\acts}{\mathrel{\reflectbox{$\righttoleftarrow$}}}
\nc{\inv}{\mathit{inv}}
\nc{\DSingY}{{\Dmod(\Sing(Y))^\unshift}}
\nc{\sigmaA}{{\, \sigma,A}}
\nc{\mixed}{\mathsf{mixed}}
\nc{\Grid}{\on{Grid}}
\nc{\grid}{\mathit{grid}}
\title{The spectral gluing theorem revisited}
\author{Dario Beraldo}
\address{Institut de Math\'ematiques de Toulouse; UMR 5219, Universit\'e de Toulouse; CNRS, UPS, 118 route de Narbonne, F--31062 Toulouse Cedex 9, France}
\email{darioberaldo@gmail.com}
\begin{document}


\removeabove{0.5cm}
\removebetween{0.5cm}
\removebelow{0.5cm}

\maketitle

\begin{prelims}

\DisplayAbstractInEnglish

\bigskip

\DisplayKeyWords

\medskip

\DisplayMSCclass

\bigskip

\languagesection{Fran\c{c}ais}

\bigskip

\DisplayTitleInFrench

\medskip

\DisplayAbstractInFrench

\end{prelims}


\newpage

\setcounter{tocdepth}{2}

\tableofcontents


\sec{Introduction}

Let $X$ be a smooth connected complete curve over a ground field $\kk$, algebraically closed and of characteristic zero. Let $G$ be a connected reductive group over $\kk$ and $\LSG$ the derived stack of de Rham $G$-local systems on $X$. We choose a Borel subgroup  $B \subseteq G$, regarded as fixed throughout.

\sssec{}

The DG category $\ICoh_\N(\LSG)$ of \emph{ind-coherent sheaves on $\LSG$ with nilpotent singular support} (see \cite{ICoh} and \cite{AG1}) is one of the two protagonists of the geometric Langlands program.
In \cite{AG1}, it is shown that $\ICoh_\N(\LSG)$ is bigger than the more familiar $\QCoh(\LSG)$, in the precise sense that there is a colocalization\footnote{A technical term that means: an adjunction with fully faithful left adjoint.}
\begin{equation} 	\nonumber
\begin{tikzpicture}[scale=1.5]
\node (a) at (0,1) {$\QCoh(\LSG)$};
\node (b) at (2.5,1) {$\ICoh_\N(\LSG)$.};
\path[right hook ->,font=\scriptsize,>=angle 90]
([yshift= 1.5pt]a.east) edge node[above] {$\Xi$ } ([yshift= 1.5pt]b.west);
\path[->>,font=\scriptsize,>=angle 90]
([yshift= -1.5pt]b.west) edge node[below] {$\Psi$ } ([yshift= -1.5pt]a.east);
\end{tikzpicture}
\end{equation}
It is also explained that the difference between these two DG categories is a manifestation of the non-commutativity of $G$, accounted for by the existence of proper parabolic subgroups. For instance, the two DG categories coincide iff $G$ is a torus, in which case there are no proper parabolic subgroups.

\sssec{} \label{sssec: idea of AG gluing}

This idea takes a more precise form in the spectral gluing theorem of \cite{AG2}, which amounts to:
\begin{itemize}
\item
a glued category 
$$
\Glue := \underset{P \in \Par^\op}{\laxlim} \; I(G,P)
$$ 
consisting of $2^{\rank(G)}$ pieces, one for each standard\footnote{\virg{Standard} means \virg{containing the chosen Borel $B$}.} parabolic subgroup;
\item
a fully faithful functor 
$$
\gamma: \ICoh_\N(\LSG) \hto \Glue.
$$
\end{itemize}
Here $\Par$ denotes the poset of standard parabolics, including $G$, with respect to inclusions.

\sssec{}

In analogy with the extended Whittaker category construction (see \cite{Outline}, \cite{ext-whit}), the gluing components $I(G,P)$ should be regarded as categories of Fourier modes and $\gamma$ should be regarded as a Fourier decomposition. For instance, the category of $G$-Fourier modes $I(G,G)$ coincides with  $\QCoh(\LSG)$. 
%

The present work originates from the observation that such Fourier decomposition is imperfect, in that $\gamma$ is not essentially surjective. 
%
%
The goal of this paper is to correct this imperfection by identifying exactly which collections of Fourier coefficients belong to the essential image of $\gamma$.

\ssec{The case of $G= \mathrm{GL}_2$}

Let us explain the construction of \cite{AG2} and our improvement in the simplest case of $G$ of semisimple rank $1$. For definiteness, we set $G=\mathrm{GL}_2$ and keep this assumption in place until Section \ref{ssec:intro-general G}.

\sssec{}

Since there is only one proper standard parabolic $B$, the difference between $\QCoh(\LSG)$ and $\ICoh_\N(\LSG)$ is controlled by $I(G,B)$, which is a certain DG category of sheaves on the formal completion $(\LSG)^\wedge_{\LSB}$ of the map $\LSB \to \LSG$ (the map that forgets the horizontal flag).
In more detail, $I(G,B)$ is a hybrid DG category
$$
I(G,B) := \ICoh_0((\LSG)^\wedge_{\LSB})
$$
that sits between $\QCoh((\LSG)^\wedge_{\LSB})$ and $\ICoh((\LSG)^\wedge_{\LSB})$.

\sssec{} \label{defn:Icohzero}

The definition of $\ICoh_0$ goes as follows. For a map $\Y \to \Z$ of quasi-smooth stacks, set:
$$
\ICoh_0(\Z^\wedge_\Y)
:=
\ICoh(\Z^\wedge_\Y)
\ustimes{\ICoh(\Y)}
\QCoh(\Y),
$$
where the map $\QCoh(\Y) \to \ICoh(\Y)$ is the natural inclusion and $\ICoh(\Z^\wedge_\Y) \to \ICoh(\Y)$ is the ind-coherent pullback along $\Y \to \Z^\wedge_\Y$.
In \cite{centerH}, \cite{shvcatHH} and \cite{chiral-hom}, we gave explanations for the \emph{raison d'\^{e}tre} of $\ICoh_0$.
By construction, $\ICoh_0(\Z^\wedge_\Y)$ belongs to a natural sequence of colocalizations:
\begin{equation} 	\nonumber
\begin{tikzpicture}[scale=1.5]
\node (a) at (-0.20,1) {$\QCoh(\Z^\wedge_\Y)$};
\node (b) at (2,1) {$\ICoh_0(\Z^\wedge_\Y)$};
\node (c) at (4,1) {$\ICoh(\Z^\wedge_\Y)$.};
\path[right hook ->,font=\scriptsize,>=angle 90]
([yshift= 1.5pt]a.east) edge node[above] { $\Xi_{\Z^\wedge_\Y}$} ([yshift= 1.5pt]b.west);
\path[->>,font=\scriptsize,>=angle 90]
([yshift= -1.5pt]b.west) edge node[below] { $\Psi_{\Z^\wedge_\Y}$ } ([yshift= -1.5pt]a.east);
\path[right hook ->,font=\scriptsize,>=angle 90]
([yshift= 1.5pt]b.east) edge node[above] {$  $} ([yshift= 1.5pt]c.west);
\path[->>,font=\scriptsize,>=angle 90]
([yshift= -1.5pt]c.west) edge node[below] {$ $} ([yshift= -1.5pt]b.east);
\end{tikzpicture}
\end{equation}

\sssec{}

In our particular case of $I(G,B) := \ICoh_0((\LSG)^\wedge_{\LSB})$, let us consider the adjunction
\begin{equation} 	\nonumber
\begin{tikzpicture}[scale=1.5]
\node (a) at (0,1) {$\QCoh((\LSG)^\wedge_{\LSB})$};
\node (b) at (3,1) {$\ICoh_0((\LSG)^\wedge_{\LSB})$};
\path[right hook ->,font=\scriptsize,>=angle 90]
([yshift= 1.5pt]a.east) edge node[above] { $\Xi_{G,B}$} ([yshift= 1.5pt]b.west);
\path[->>,font=\scriptsize,>=angle 90]
([yshift= -1.5pt]b.west) edge node[below] { $\Psi_{G,B}$ } ([yshift= -1.5pt]a.east);
\end{tikzpicture}
\end{equation}
and the functor
\begin{equation} \label{eqn:gluing from G to B intro}
\QCoh(\LSG) 
\xto{(\wh \p_B)^*}
\QCoh((\LSG)^\wedge_{\LSB})
\xto{\Xi_{G,B}}
\ICoh_0((\LSG)^\wedge_{\LSB}),
\end{equation}
where $\wh \p_B: (\LSG)^\wedge_{\LSB} \to \LSG$ is the obvious map.
Whenever we have a functor $F: \C \to \D$ between two DG categories, we can form the glued DG category 
$$
\Glue(\C \to \D),
$$
whose objects are triples $ \bigt{ c \in \C, d \in \D; \eta: F(c) \to d }$, where $\eta$ is an arrow (not necessarily an isomorphism!) in $\D$.

\sssec{}

For $G=\mathrm{GL}_2$, the \emph{spectral gluing theorem} of \cite{AG2} is an explicit fully faithful functor
\begin{equation} \label{eqn: weak spectral gluing GL2}
\gamma: 
\ICoh_\N(\LSG)
\hto
\Glue := \Glue 
\Bigt{ \QCoh(\LSG)  \to \ICoh_0((\LSG)^\wedge_{\LSB}) }.
\end{equation}
As mentioned, the starting point of the present work is the observation that such inclusion \emph{is not an equivalence}. More precisely, while $\QCoh(\LSG)$ is too small to match $\ICoh_\N(\LSG)$, the glued DG category $\Glue$ is too big.

\begin{example}
The triple 
$$
(\O_{\LSG}, 0; \eta = 0) 
\in
\Glue(\QCoh(\LSG)  \to \ICoh_0((\LSG)^\wedge_{\LSB}))
$$
does not belong to the essential image of $\gamma$. More generally, a triple of the form $(\F, 0;0)$ belongs to the essential image of $\gamma$ if and only if $\F$ is pushed forward from the open locus of \emph{irreducible} $G$-local systems.
\end{example}

\sssec{}

In this paper, we state and prove a \emph{strong spectral gluing theorem} which explicitly identifies the essential image of the inclusion $\gamma$. For $G=\mathrm{GL}_2$, our result can be stated now: informally, rather than gluing along (\ref{eqn:gluing from G to B intro}), we glue along the cospan(=op-correspondence) below.

\begin{thm} [Strong spectral gluing for $G=\mathrm{GL}_2$] \label{thm:intro-GL2 case}
For $G=\mathrm{GL}_2$, the DG category $\ICoh_\N(\LSG)$ is naturally equivalent to the limit of the diagram
\begin{equation} 
\nonumber
\begin{tikzpicture}[scale=1.5]
\node (00) at (0,0) {$\QCoh(\LSG)$};
\node (10) at (3,0) {$\QCoh((\LSG)^\wedge_{\LSB})$.};
\node (11) at (3,1) {$\ICoh_0((\LSG)^\wedge_{\LSB})$};
\path[->,font=\scriptsize,>=angle 90]
(00.east) edge node[above] {$(\wh \p_B)^*$} (10.west); 
\path[->>,font=\scriptsize,>=angle 90]
(11.south) edge node[right] {$\Psi_{G,B}$} (10.north);
\end{tikzpicture}
\end{equation} 

\end{thm}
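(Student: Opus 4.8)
The plan is to identify the glued category $\Glue$ of \cite{AG2} with an \emph{oriented} fiber product, to recognize the limit in the statement as the full subcategory of $\Glue$ cut out by an invertibility condition, and then to prove that the essential image of $\gamma$ is precisely that subcategory. Write $F := \Xi_{G,B}\circ(\wh\p_B)^*$ for the composite in (\ref{eqn:gluing from G to B intro}). Since $(\Xi_{G,B},\Psi_{G,B})$ is a colocalization, $\Xi_{G,B}$ is left adjoint to $\Psi_{G,B}$ with $\Psi_{G,B}\circ\Xi_{G,B}\simeq\id$; hence for $c\in\QCoh(\LSG)$ and $d\in\ICoh_0((\LSG)^\wedge_{\LSB})$ the space of arrows $F(c)\to d$ is naturally the space of arrows $(\wh\p_B)^*c\to\Psi_{G,B}(d)$. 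This identification is functorial and upgrades to an equivalence between $\Glue=\Glue\bigt{\QCoh(\LSG)\to\ICoh_0((\LSG)^\wedge_{\LSB})}$ and the oriented fiber product of the cospan $(\wh\p_B)^*$, $\Psi_{G,B}$, i.e.\ the category of triples $(c,d,\xi)$ with $\xi\colon(\wh\p_B)^*c\to\Psi_{G,B}(d)$ an \emph{arbitrary} arrow. The limit $\L$ of that cospan — the category appearing in the statement — is then the full subcategory of $\Glue$ on the triples for which $\xi$, equivalently $\Psi_{G,B}(\eta)$ (using $\Psi_{G,B}\circ F\simeq(\wh\p_B)^*$), is invertible. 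This is consistent with the Example: a triple $(\F,0;0)$ lies in $\L$ exactly when $(\wh\p_B)^*\F\simeq 0$, i.e.\ when $\F$ is supported on the locus of irreducible local systems.

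\textbf{$\gamma$ factors through $\L$.} Next I would check that for every $\F\in\ICoh_\N(\LSG)$ the arrow $\Psi_{G,B}(\eta_{\F})$ is invertible. Unwinding the definition of $\gamma$, this is a base-change statement: the colocalizations $\Psi_{\LSG}$ and $\Psi_{G,B}$ are compatible, in the appropriate sense, with $*$-pullback along $\wh\p_B$ and with $!$-restriction to the formal completion $(\LSG)^\wedge_{\LSB}$ — the kind of fact proved in \cite{AG2} in the course of constructing $\gamma$ (and of matching it with spectral Eisenstein functors). This yields a refinement $\gamma'\colon\ICoh_\N(\LSG)\to\L$, which remains fully faithful because $\gamma$ is and $\L\subseteq\Glue$ is a full subcategory.

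\textbf{Reduction to the singular parts.} Being fully faithful and colimit-preserving, $\gamma'$ is an equivalence as soon as its essential image contains a set of generators of $\L$. Now $\pr_1\colon\L\to\QCoh(\LSG)$ is a colocalization, with fully faithful left adjoint $c\mapsto\bigt{c,\Xi_{G,B}(\wh\p_B)^*c,\id}$ and kernel $\K':=\ker\Psi_{G,B}$; hence $\L$ is generated by the image of this left adjoint together with $\K'$. The former is hit by $\gamma'$ applied to $\Xi_{\LSG}(\QCoh(\LSG))$, using that $!$-restriction to the formal completion intertwines the $\QCoh$-into-$\ICoh_0$ embeddings. So everything reduces to showing that the functor $\gamma_B$ of \cite{AG2} — the $\wh\p_B^!$-restriction $\ICoh_\N(\LSG)\to\ICoh_0((\LSG)^\wedge_{\LSB})$ — restricts to an equivalence $\K\xrightarrow{\sim}\K'$, where $\K:=\ker\bigt{\Psi_{\LSG}\colon\ICoh_\N(\LSG)\to\QCoh(\LSG)}$.

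\textbf{The crux.} Proving $\K\simeq\K'$ is the step I expect to be the main obstacle. The geometric input is that, for $G=GL_2$, the part of the nilpotent cone $\N\subseteq\Sing(\LSG)$ lying over the open locus of \emph{irreducible} $G$-local systems is just the zero section — a nonzero nilpotent endomorphism of a rank-$2$ local system has an invariant line, hence yields a $B$-reduction — so that $\N$, away from the zero section, lies over the image of $\p_B\colon\LSB\to\LSG$. Consequently objects of $\K$ (those of $\ICoh_\N(\LSG)$ that restrict to $0$ on the irreducible locus) are set-theoretically supported on the image of $\LSB$, whence $\wh\p_B^!$-restriction is fully faithful on $\K$; its essential image is the ``purely singular'' subcategory of $\ICoh((\LSG)^\wedge_{\LSB})$, which — by the definition of $\ICoh_0((\LSG)^\wedge_{\LSB})$ as the category of ind-coherent sheaves on the formal completion with nilpotent singular support (see the references in the Remark above) — is exactly $\K'=\ker\Psi_{G,B}$. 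Carrying this out rigorously means manipulating ind-coherent sheaves on the formal completion $(\LSG)^\wedge_{\LSB}$ and reconciling the singular-support conditions on the two sides; this is where the microlocal analysis of $\ICoh_\N(\LSG)$ near $\LSB$ from \cite{AG1} and \cite{AG2} enters essentially. Granting $\K\simeq\K'$, the preceding steps combine to yield the asserted equivalence $\ICoh_\N(\LSG)\simeq\L$.
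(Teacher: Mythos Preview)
Your approach is correct in outline and genuinely different from the paper's. The paper does not give a separate proof for $GL_2$; rather, Theorem~\ref{thm:intro-GL2 case} is deduced as the rank-one instance of the general Theorem~\ref{mainthm:strong-ICOHN}, whose proof proceeds uniformly via the microlocal reformulation (Theorem~\ref{mainthm:microlocal}): one tensors down to an atlas, rewrites everything in terms of $\Dmod$ on spaces of singularities, and then reduces (Proposition~\ref{prop:pseudo-proper-schemes}, Corollary~\ref{cor:can do pointwise}) to the pointwise contractibility of the glued Springer fibers $\Spr_{\Glued,\unip}^{\sigmaA}$. For $GL_2$ this last step is a triviality (Examples~4.1.2--4.1.3): when $A=0$ the glued fiber collapses to a point, and when $A\neq 0$ only the $[B\supseteq B]$-term survives and is a single point. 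Your route instead stays on the categorical side: a clean identification of $\Glue$ with an oriented fiber product, a recognition of $\L$ as the invertibility locus, and a reduction to the equivalence $\K\simeq\K'$ of the ``purely singular'' parts. This is more elementary in spirit and makes the $GL_2$ case feel self-contained; the paper's route, by contrast, scales to arbitrary $G$ without modification.

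One caution about your crux. The sentence ``objects of $\K$ \dots\ are set-theoretically supported on the image of $\LSB$, whence $\wh\p_B^!$-restriction is fully faithful on $\K$'' is not a valid inference as written: $\p_B:\LSB\to\LSG$ is proper but not a closed immersion, so support on its image does not by itself force $(\wh\p_B)^!$ to be fully faithful. You are right that this is exactly where the microlocal input enters. Concretely, once one passes to an atlas and invokes the identification of the $\circ$-parts with $\Dmod(\PP N)$ and $\Dmod(\PP\,\fs_B^{-1}(O_{Y_B}))$ (the ingredient from \cite{AG2} recalled in Section~\ref{sssec:relation with AG2} and Proposition~\ref{prop:ICohzero via DSing}), the claim $\K\simeq\K'$ becomes the statement that pullback along $\PP\,\fs_B^{-1}(O_{Y_B})\to\PP N$ is a $\Dmod$-equivalence. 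For $GL_2$ this map is proper with singleton geometric fibers (a nonzero nilpotent in $\gl_2$ is regular, hence has a unique invariant line), so Proposition~\ref{prop:pseudo-proper-schemes} applies. Thus both proofs ultimately rest on the same geometric fact; yours packages it as $\K\simeq\K'$, the paper's as contractibility of $\Spr_{\Glued,\unip}^{\sigmaA}$.
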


\begin{rem}
Let us explicitly describe the target DG category in the two versions of the spectral gluing theorem (still in the case of $\mathrm{GL}_2$).
The target of the version in \cite{AG2} is the DG category $\Glue$, whose objects are triples 
$$
\Bigt{
\F \in \QCoh(\LSG), 
\G \in I(G,B),
\eta: \Xi_{G,B} ((\wh \p_B)^*(\F)) \to \G
},
$$
where the arrow $\eta$ is not required to be an isomorphism.
The target of our version is the limit of the op-correspondence above, which is the full subcategory of $\Glue$ spanned by objects satisfying the following additional requirement: the morphism 
$\wt\eta: (\wh \p_B)^*(\F) \to \Psi_{G,B} (\G)$ in $\QCoh((\LSG)^\wedge_{\LSB})$, obtained from $\eta$ by adjunction, must be an isomorphism.
\end{rem}

\ssec{A baby version}

The point of the above theorem is that $\gamma: \ICoh_\N(\LSG) \hto \Glue$ factors as the composition
$$
\ICoh_\N(\LSG)
\xto{ \;\;\simeq \;\;}
\QCoh(\LSG)
\ustimes{\QCoh((\LSG)^\wedge_{\LSB})}
\ICoh_0((\LSG)^\wedge_{\LSB})
\hto
\Glue.
$$
As we are about to illustrate, a similar phenomenon occurs in a much simpler (yet entertaining) situation. 
The following discussion might appear unrelated to the strong spectral gluing for $\mathrm{GL}_2$; we will explain later that there is a tight relation between the two.

\sssec{} \label{sssec: blow up situation}
\nc{\Bl}{\mathsf{Bl}}

Consider the cartesian square
\begin{equation} 
\nonumber
\begin{tikzpicture}[scale=1.5]
\node (00) at (0,0) {$\pt = \Spec(\kk)$};
\node (10) at (2,0) {$V$,};
\node (01) at (0,1) {$E$};
\node (11) at (2,1) {$\Bl$};
\path[right hook ->,font=\scriptsize,>=angle 90]
(00.east) edge node[above] {$i_0$} (10.west); 
\path[right hook ->,font=\scriptsize,>=angle 90]
(01.east) edge node[above] {$i$} (11.west); 
\path[->>,font=\scriptsize,>=angle 90]
(01.south) edge node[right] {$p_E$} (00.north);
\path[->>,font=\scriptsize,>=angle 90]
(11.south) edge node[right] {$\pi$} (10.north);
\end{tikzpicture}
\end{equation} 
where $V \simeq \AA^n$ is a finite dimensional vector scheme, $\Bl$ its blow-up at the origin and $E$ the exceptional divisor.
The baby situation we will describe amounts to expressing $\Dmod(V)$ as a DG category glued out $\Dmod(\Bl)$ and $\Dmod(\pt) \simeq \Vect_\kk$.

\sssec{}

Let us look at the following table of analogies, where the baby version is on the left and the adult version on the right:
\begin{equation} 	\nonumber
\begin{tikzpicture}[scale=1.5]
\node (A) at (-2,2) {$\Dmod(\pt)$};
\node (B) at (0,2) {$\Dmod(V)$;};
\node (a) at (2.5,2) {$\QCoh(\LSG)$};
\node (b) at (5.5,2) {$\ICoh_\N(\LSG)$};
\path[right hook ->,font=\scriptsize,>=angle 90]
([yshift= 1.5pt]a.east) edge node[above] {$\Xi$ } ([yshift= 1.5pt]b.west);
\path[->>,font=\scriptsize,>=angle 90]
([yshift= -1.5pt]b.west) edge node[below] {$\Psi$ } ([yshift= -1.5pt]a.east);
\path[right hook ->,font=\scriptsize,>=angle 90]
([yshift= 1.5pt]A.east) edge node[above] {$(i_0)_*$ } ([yshift= 1.5pt]B.west);
\path[->>,font=\scriptsize,>=angle 90]
([yshift= -1.5pt]B.west) edge node[below] {$(i_0)^!$ } ([yshift= -1.5pt]A.east);
\node (s) at (2.5,1) {$\QCoh(\LSG)$};
\node (a) at (2.5,0) {$\QCoh((\LSG)^\wedge_{\LSB})$};
\node (b) at (6,0) {$\ICoh_0((\LSG)^\wedge_{\LSB})$.};
\path[->,font=\scriptsize,>=angle 90]
([yshift= 1.5pt]s.south) edge node[right] { $(\wh \p_B)^*$} ([yshift= 1.5pt]a.north);
\path[right hook ->,font=\scriptsize,>=angle 90]
([yshift= 1.5pt]a.east) edge node[above] { $\Xi_{G,B}$} ([yshift= 1.5pt]b.west);
\path[->>,font=\scriptsize,>=angle 90]
([yshift= -1.5pt]b.west) edge node[below] { $\Psi_{G,B}$ } ([yshift= -1.5pt]a.east);
\node (S) at (-2,1) {$\Dmod(\pt)$};
\node (A) at (-2,0) {$\Dmod(E)$};
\node (B) at (0,0) {$\Dmod(\Bl)$;};
\path[->,font=\scriptsize,>=angle 90]
([yshift= 1.5pt]S.south) edge node[right] { $(p_E)^!$} ([yshift= 1.5pt]A.north);
\path[right hook ->,font=\scriptsize,>=angle 90]
([yshift= 1.5pt]A.east) edge node[above] { $i_*$} ([yshift= 1.5pt]B.west);
\path[->>,font=\scriptsize,>=angle 90]
([yshift= -1.5pt]B.west) edge node[below] { $i^!$ } ([yshift= -1.5pt]A.east);
\end{tikzpicture}
\end{equation}

\sssec{}

Consider now the functor
$$
\gamma_{\mathit{baby}}: 
\Dmod(V)
\longto 
\Glue
\bigt{
\Dmod(\pt) 
\xto{i_* \circ (p_E)^!} 
\Dmod(\Bl)
},
\hspace{.4cm}
\F \squigto
\bigt{
(i_0)^!(\F), \pi^!(\F), \eta
},
$$
where the arrow $\eta: i_* p_E^! (i_0)^!(\F) \longto \pi^!(\F)$ in $\Dmod(\Bl)$ is the obvious one obtained by adjunction.
One can prove directly that $\gamma_{\mathit{baby}}$ is fully faithful (but not an equivalence). This is the baby version of the original spectral gluing theorem.

\sssec{}

On the other hand, let us look at the pushout prestack $\pt \sqcup_E \Bl$ and at its DG category of $\fD$-modules:
$$
\Dmod(\pt \sqcup_E \Bl)
\simeq
\Dmod(\pt)
 \ustimes{\Dmod(E)} 
 \Dmod(\Bl).
$$
There is an evident inclusion
\begin{equation} \label{eqn:inclusion B to AG}
\Dmod(\pt)
\ustimes{\Dmod(E)} 
\Dmod(\Bl)
\hto
\Glue(\Dmod(\pt) \to \Dmod(\Bl)).
\end{equation}
The baby version of the strong spectral gluing theorem states that $\gamma_{\mathit{baby}}$ factors as the composition of the equivalence
$$
(i_0^!, \pi^!):\Dmod(V) 
\xto{\; \; \simeq \; \; }
\Dmod(\pt) \ustimes{\Dmod(E)} \Dmod(\Bl)
$$
followed by the inclusion (\ref{eqn:inclusion B to AG}).

\sssec{} \label{sssec:explain-baby-version}

The baby version of the main theorem is more than just an analogy, as we now explain.
Two properties of $\LSG$ (as well as all the $\LSP$'s) are at play here. 

\medskip

The most important one is that these stacks are quasi-smooth. 
Any quasi-smooth stack $\Y$ has an associated stack of singularities $\Sing(\Y)$, which is the reduced classical truncation of the $(-1)$-shifted cotangent bundle of $\Y$. This stack is used to define the notion of singular support for (ind-)coherent sheaves, see \cite{BIK} and \cite{AG1}. 
In the case at hand, $\Sing(\LSG) := \Arth_G$ is\footnote{after choosing an Ad-invariant bilinear form on $\fg$} the space of \emph{geometric Arthur parameters} consisting of pairs $(\sigma, A)$, where $\sigma$ is a $G$-local system and $A$ a horizontal section of the flat vector bundle $\fg_{\sigma}$.

\medskip

The second property is that $\LS_G$ is a global complete intersection stack. By choosing an atlas appropriately, our statements about $\LSG$ reduce to statements about a global complete intersection scheme, that is, a scheme written as a fiber product $U \times_V \pt$ with $U$ smooth affine and $V$ a vector group.

\sssec{}

Now let $Y =U \times_V \pt$ be a global complete intersection scheme. 
Then $\Sing(Y) := H^{-1}(T^* Y)$ is the kernel of the codifferential of the map $U \to V$, that is, the fiber product $(Y \times V^*) \times_{T^* U} U$.
We regard $\Dmod(\Sing(Y))$ as a symmetric monoidal DG category (equipped with its pointwise tensor product) and show that $\ICoh(Y)$ carries an action of a certain modification of $\Dmod(\Sing(Y))$. Such modification, denoted $\DSingY$, is the symmetric monoidal DG category obtained from $\Dmod(\Sing(Y))$ via a \virg{shift of grading} that uses the $\Gm$-action on the fibers of the projection $\Sing(Y) \to Y$.

\sssec{} \label{sssec:pippo}

Roughly speaking, the passage from the baby theorem (which is a gluing theorem for categories of $\fD$-modules) to the adult version (which concerns categories of ind-coherent sheaves) is the process of tensoring up with $\ICoh(Y)$ over $\DSingY$. We will be more precise on this later, in Section \ref{sec:microlocal reformulation}.

This idea is borrowed from \cite{AG2}, with the following difference: rather than constructing an action of $\DSingY$ on $\ICoh(Y)$, the authors of \cite{AG2} construct an action of $\Dmod(\bbP\Sing(Y))$ on the quotient DG category $\ICoh(Y)/\QCoh(Y)$.
The latter has the advantage of being defined for any quasi-smooth scheme $Y$, rather than just for global complete intersections.

\ssec{The case of general $G$} \label{ssec:intro-general G}

Let us now describe the shape of the strong spectral gluing theorem in the case $G$ has higher semisimple rank.
To formulate the statement, it will be necessary to borrow some notions from the theory of $\H$, developed in \cite{centerH} and \cite{shvcatHH}. The main points of this theory are reviewed in Section \ref{sssec:Review H}.

\sssec{}

We start by recalling that the spectral gluing theorem of \cite{AG2} amounts to an explicit fully faithful functor
$$
\gamma:
\ICoh_\N(\LSG)
\hto
\Glue 
:=
\underset{P \in \Par^\op}\laxlim \; 
\ICoh_0((\LSG)^\wedge_{\LSP}),
$$
where $\Par$ is the poset of all standard parabolics subgroups of $G$.

\begin{rem}
To recover the statement for $G=\mathrm{GL}_2$, just note that $\ICoh_0((\LSG)^\wedge_{\LSG}) \simeq \QCoh(\LSG)$.
\end{rem}

\sssec{} \label{sssec:description of objects of GLue}

An object of $\Glue$ consists of:
\begin{itemize}
\item
for each $P \in \Par$, an object $\F_P \in \ICoh_0((\LSG)^\wedge_{\LSP})$;

\smallskip 

\item
for each inclusion $\PQ$, an arrow 
$$
\eta_{\PQ}:(\wh\p_{G,\PQ})^{!,0}(\F_P) \longto \F_Q
$$
in $\ICoh_0((\LSG)^\wedge_{\LSQ})$, where
$$
(\wh\p_{G,\PQ})^{!,0}:
\ICoh_0((\LSG)^\wedge_{\LSP})
\longto
\ICoh_0((\LSG)^\wedge_{\LSQ})
$$
is the obvious pullback functor;

\smallskip

\item
some natural compatibilities that we do not spell out here (we will spell them out in the main body of the paper, see also \cite[Sections 4.1-4.2]{AG2}).

\end{itemize}

\sssec{} \label{sssec:Review H}

Our improved version of the theorem identifies the essential image of $\gamma$. To define the relevant full subcategory of $\Glue$, we first need to review the main features of the $\H$-construction.

\begin{itemize}
\item

The starting point is the definition of the DG category $\ICoh_0(\Z^\wedge_\Y)$ attached to a map $\Y \to \Z$ of quasi-smooth stacks. See Section \ref{defn:Icohzero}, as well as \cite[Section 3.2]{AG2} and \cite[Section 3]{centerH}.

\smallskip 

\item
The $\ICoh_0$ construction, applied to the diagonal $\Y \to \Y \times \Y$, yields the DG category 
$$
\H(\Y) := \ICoh_0((\Y \times \Y)^\wedge_\Y).
$$
The convolution monoidal structure on $\ICoh((\Y \times \Y)^\wedge_\Y)$ restricts to a monoidal structure on $\H(\Y)$.

\smallskip 

\item
More generally, for a map $\Y \leftto \X \to \Z$ of quasi-smooth stacks, we consider
$$
\Hcorr \Y \X \Z := \ICoh_0((\Y \times \Z)^\wedge_\X),
$$
which is naturally an $(\H(\Y), \H(\Z))$-bimodule. We also use the notations $\Hdx \X \Z := \Hcorr \X \X \Z$ and $\Hsx \Y \X := \Hcorr \Y \X \X$.
For instance, we have:
\begin{eqnarray}
\nonumber
& & \Hcorr  \Y \X \pt \simeq \ICoh_0(\Y^\wedge_\X) \\
\nonumber
& & \Hcorr \X  \X \X \simeq \H(\X) \\
\nonumber
& & \Hdx \X \pt \simeq \QCoh(\X) \\
\nonumber
& & \Hcorr \pt \X \pt \simeq \Dmod(\X).
\end{eqnarray}

\smallskip 

\item
For a diagram $\V \leftto \W \leftto \X \to \Y \to \Z$ of quasi-smooth stacks, there is a natural equivalence
$$
\Hsx \V \W
\usotimes{\H(\W)}
\Hcorr \W \X \Y 
\usotimes{\H(\Y)}
\Hdx \Y \Z
\xto{\;\; \simeq \;\;}
\Hcorr \V \X \Z
$$
of $(\H(\V), \H(\Z))$-bimodule DG categories.

\smallskip 

\item
In the above setting, the $(\H(\W), \H(\Y))$-bimodule $\Hcorr \W \X \Y $ is left and right dualizable, with both duals canonically identified with $\Hcorr \Y \X \W $. It follows from this that $\QCoh(\Y) \simeq \Hdx \Y \pt$ is a bimodule for the left action of $\H(\Y)$ and the right action of $\Hcorr \pt \Y \pt \simeq \Dmod(\Y)$.

\smallskip 

\item
The diagonal $\Delta_\Y$ induces a monoidal pushforward functor $\QCoh(\Y) \to \H(\Y)$ that admits a continuous and conservative right adjoint. This implies that $\H(\Y)$ is rigid. 

\smallskip 

\item
For $\X \to \Y$ a map of quasi-smooth stacks and $\N$ a closed conical subset of $\Sing(\Y)$, the monoidal DG category $\H(\Y)$ acts on $\ICoh(\Y)$, $\ICoh_\N(\Y)$ and $\QCoh(\Y^\wedge_\X)$.

\end{itemize}

\sssec{}

Now, for $\PQ$, there is an $\H(\LSP)$-linear diagram
\begin{equation} 	\nonumber
\begin{tikzpicture}[scale=1.5]
\node (a) at (0,1) {$\QCoh(\LSP)$};
\node (b) at (2,1) {$\QCoh((\LSP)^\wedge_\LSQ)$};
\node (c) at (5,1) {$\ICoh_0((\LSP)^\wedge_\LSQ).$};
\path[->,font=\scriptsize,>=angle 90]
([yshift= 1.5pt]a.east) edge node[above] {$\wh\p^*_\PQ$} ([yshift= 1.5pt]b.west);
\path[right hook ->,font=\scriptsize,>=angle 90]
([yshift= 1.5pt]b.east) edge node[above] {$ \Xi_{\PQ}$} ([yshift= 1.5pt]c.west);
\path[->>,font=\scriptsize,>=angle 90]
([yshift= -1.5pt]c.west) edge node[below] {$\Psi_{\PQ}$} ([yshift= -1.5pt]b.east);
\end{tikzpicture}
\end{equation}
Let us now push forward along $\p_P: \LSP \to \LSG$ in the $\H$-sense, that is, we tensor up with $\Hsx \LSG \LSP$ over $\H(\LSP)$.
Thanks to the third and fourth item above, we obtain an $\H(\LSG)$-linear diagram 
\begin{equation} 	\nonumber
\begin{tikzpicture}[scale=1.5]
\node (a) at (0,1) {$\ICoh_0((\LSG)^\wedge_\LSP)$};
\node (b) at (4.2,1) {$\Hsx \LSG \LSP
\usotimes{\H(\LSP)}
\QCoh((\LSP)^\wedge_\LSQ)$};
\node (c) at (8.2,1) {$\ICoh_0((\LSG)^\wedge_\LSQ).$};
\path[->,font=\scriptsize,>=angle 90]
([yshift= 1.5pt]a.east) edge node[above] {$(\wh\p_{G, \PQ})^{!,P-\temp}$} ([yshift= 1.5pt]b.west);
\path[right hook ->,font=\scriptsize,>=angle 90]
([yshift= 1.5pt]b.east) edge node[above] {$ \Xi_{G,\PQ}$} ([yshift= 1.5pt]c.west);
\path[->>,font=\scriptsize,>=angle 90]
([yshift= -1.5pt]c.west) edge node[below] {$\Psi_{G,\PQ}$} ([yshift= -1.5pt]b.east);
\end{tikzpicture}
\end{equation}

\sssec{}

The DG category $\Glue$ was constructed by disregarding $\Psi_{G, \PQ}$: indeed, the composition of the two arrows from left to right is exactly the functor $(\wh\p_{G,\PQ})^{!,0}$ that appeared in Section \ref{sssec:description of objects of GLue}.

\medskip

The key proposal of the present paper is that one should instead disregard $\Xi_{G, \PQ}$ and \emph{keep} $\Psi_{G, \PQ}$. Indeed, our theorem states that an object of $\Glue$ as above belongs to the essential image of $\gamma$ if and only if the following condition is satisfied: 
for any $\PQ$, the arrow $\eta_{\PQ}$ induces an isomorphism
$$
(\wh\p_{G,\PQ})^{!,P-\temp}(\F_P) \longto \Psi_{G,\PQ}(\F_Q).
$$

\sssec{}

To state this more formally, we set
$$
\ICoh_0((\LSG)^\wedge_{\LSQ})^{P-\temp}
:=
\Hsx \LSG \LSP
\usotimes{\H(\LSP)}
\QCoh((\LSP)^\wedge_\LSQ),
$$
and consider the DG category
$$
\lim_{[\PQ] \in \Tw(\Par)^\op}
\,
\ICoh_0((\LSG)^\wedge_{\LSQ})^{P-\temp},
$$
where the limit is taken along the cospans
$$
\ICoh_0((\LSG)^\wedge_{\LSP})
\xto{\; 
(\wh \p_{G, \PQ})^{!, P-\temp}
\; 
}
\ICoh_0((\LSG)^\wedge_{\LSQ})^{P-\temp}
\xleftarrow{\; 
\Psi_{G, \PQ}
\;
}
\ICoh_0((\LSG)^\wedge_{\LSQ})
$$
displayed above. The indexing $1$-category $\Tw(\Par)$ is the poset of \emph{twisted arrows} of $\Par$, see Section~\ref{sssec:twisted arrows}. The decoration \virg{temp} stands for \virg{tempered}, see \cite[Example 1.3.4]{shvcatHH} for an explanation of the terminology.

\sssec{}

We can now formulate an early version of the main result of this paper (the official version appears later as Theorem \ref{mainthm:strong-ICOHN}).

\begin{thm}[Strong spectral gluing] \label{mainthm-intro}
There is a natural $\H(\LSG)$-linear equivalence
$$
\gamma^{strong}:
\ICoh_\N(\LSG)
\xto{\;\;\simeq\;\;}
\lim_{[\PQ] \in \Tw(\Par)^\op} 
\,
\ICoh_0((\LSG)^\wedge_{\LSQ})^{P-\temp}.
$$
\end{thm}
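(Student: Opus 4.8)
The plan is to transport the entire statement, via the Morita adjunction (\ref{adj:Morita adjunction}), to a ``baby'' assertion about $\fD$-modules, where it becomes an instance of iterated abstract blow-up descent. The right-hand side of the claimed equivalence is $\H(\LSG)$-linear and is assembled entirely out of $\ICoh$ and the $!$-functor formalism for the maps $\LSP\to\LSQ\to\LSG$: each mixed coefficient $\ICoh_0((\LSG)^\wedge_{\LSQ})^{P-\temp}=\Hsx\LSG\LSP\usotimes{\H(\LSP)}\QCoh((\LSP)^\wedge_{\LSQ})$ is an $\H(\LSG)$-module, as is $\ICoh_\N(\LSG)$ itself, and all the structure functors $(\wh\p_{G,\PQ})^{!,P-\temp}$, $\Xi_{G,\PQ}$, $\Psi_{G,\PQ}$ (together with the parabolic pushforwards that pass from $\H(\LSP)$- to $\H(\LSG)$-modules) are $\H(\LSG)$-linear. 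The functor $\ICoh(\LSG)\usotimes{\H(\LSG)}(-)$ of (\ref{adj:Morita adjunction}) is a right adjoint, hence preserves limits, and is fully faithful, hence conservative; so it suffices to establish the equivalence of the theorem after applying it. This is where the atlas hypothesis enters: since $\LSG$ and all the $\LSP$ admit atlases by global complete intersection schemes, and since (\ref{adj:Morita adjunction}) is functorial in $Y$ --- on one side via the functoriality of sheaves of categories with local $\HC$-actions (\cite{shvcatHH}), on the other via that of shifted cotangent bundles --- the problem localizes to a purely $\fD$-module statement attached to a global complete intersection scheme $Y=U\times_V\pt$ and its singularities.

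Concretely, under $\ICoh(\LSG)\usotimes{\H(\LSG)}(-)$ the category $\ICoh_\N(\LSG)$ becomes a category of $\fD$-modules on the stack of Arthur parameters $\Arth_G=\Sing(\LSG)$ (suitably grading-shifted, as in the definition of $\DSingY$, and cut down by the nilpotence condition), the Fourier coefficient $\ICoh_0((\LSG)^\wedge_{\LSQ})$ becomes $\fD$-modules on the analogous locus over the formal completion $(\LSG)^\wedge_{\LSQ}$, and the mixed coefficient $\ICoh_0((\LSG)^\wedge_{\LSQ})^{P-\temp}$ becomes the ``temperization'' of the latter obtained by replacing $\ICoh$ by $\QCoh$ along $\LSP$ --- in atlas-local terms, a partial blow-up / formal-completion category attached to $\PQ$, glued to the preceding ones along the $\fD$-module avatars of $(\wh\p_{G,\PQ})^{!,0}$ (a $!$-pullback) and $\Psi_{G,\PQ}$ (a $!$-averaging). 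Thus the theorem reduces to the assertion that the relevant $\fD$-module category associated with $\Sing(Y)$ is the limit, over $\Tw(\Par^\op)^\op$, of this family of partial blow-up categories indexed by the standard parabolics --- exactly the higher-rank version of the rank-one model $\Dmod(V)\simeq\Vect\ustimes{\Dmod(E)}\Dmod(B)$ of the introduction. Under this dictionary one also recovers the functor $\gamma$ of \cite{AG2}, via the fully faithful functor from the twisted-arrow limit into $\Glue$ that remembers only the composites $\Xi_{G,\PQ}\circ(\wh\p_{G,\PQ})^{!,P-\temp}=(\wh\p_{G,\PQ})^{!,0}$ (full faithfulness being formal from $\Psi_{G,\PQ}\circ\Xi_{G,\PQ}\simeq\id$), together with the announced description of its essential image as the glued objects on which every $\eta_{\PQ}$ becomes an isomorphism after applying $\Psi_{G,\PQ}$.

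For the baby assertion I would argue by induction on the semisimple rank of $G$, equivalently on the poset $\Par$. The base case $\rank(G)=1$ is the baby strong gluing theorem of the introduction: $\Dmod(V)$ is recovered from $\Dmod(B)$ and $\Vect=\Dmod(\pt)$ because the prestack pushout $\pt\sqcup_E B$ maps to $V$ by a $\fD$-module equivalence --- abstract blow-up (cdh) descent applied to the square $(E\hto B)\to(\pt\hto V)$, $V$ being the contraction of its blow-up $B$ along the exceptional divisor $E$. In higher rank the family indexed by $\Tw(\Par^\op)$ is a De Concini--Procesi-type wonderful model of the stratification of $V$ by the subspaces attached to the standard parabolics; one presents the relevant locus as an iterated prestack pushout along these strata, whose boundary strata are --- after replacing a proper parabolic $P$ by its Levi --- of the very same shape for a group of strictly smaller rank, so the inductive hypothesis applies, and one concludes from the fact that $\fD$-modules carry colimits of prestacks to the corresponding limits of DG categories. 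Unwinding the Morita equivalence then turns the isomorphism conditions defining this limit into exactly the temperateness conditions of the introduction.

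The main obstacle will be the reduction itself: checking \emph{compatibly across the whole twisted-arrow diagram} that each category is recognized as an $\H(\LSG)$-module with the asserted $\fD$-module avatar and that each structure functor is carried to the expected $\fD$-module operation. This requires combining, in a single coherent package, the functoriality of sheaves of categories with local $\HC$-actions from \cite{shvcatHH}, the functoriality of shifted cotangent bundles, and a precise analysis of how $\ICoh_0$ and its grading shift on the formal completions $(\LSG)^\wedge_{\LSQ}$ see the stratification of $\Arth_G$ by $P$-nilpotent loci --- in particular matching the temperization $(-)^{P-\temp}$ on the $\fD$-module side with the operation of replacing coherent by quasi-coherent sheaves along $\LSP$. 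On the $\fD$-module side the remaining subtlety is the derived (quasi-smooth) structure: the iterated abstract blow-up descent must be run for $\fD$-modules on the possibly non-classical completions and partial blow-ups, and in the presence of the grading shift, rather than for honest smooth varieties.
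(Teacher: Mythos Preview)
Your reduction step is in the right spirit and roughly matches the paper: both sides of the claimed equivalence are $\H(\LSG)$-linear, one passes to an atlas $Y\tto\LSG$ which is a global complete intersection, and then the $(\DSingY,\H(Y))$-bimodule structure on $\ICoh(Y)$ translates everything into a statement about $\fD$-modules. The paper packages this slightly differently (base-change along the atlas using $\QCoh(\LSG)$-linearity, then Proposition~\ref{prop:ICohzero via DSing} and Corollary~\ref{cor:singsupp in terms of Dmod unshift for global comple intersection} term by term, rather than appealing to the Morita adjunction as a single move), but the upshot is the same: one is reduced to showing that the pullback
\[
\mu^!:\Dmod(\N)\longto\lim_{[\PQ]\in\Tw^\op}\Dmod(\N_\PQ)
\]
is an equivalence, where $\N_\PQ=\fs_P^{-1}(O_{\LSP})\times_{\LSP}\LSQ$ are explicit stacks over the global nilpotent cone $\N\subset\Arth_G$.

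The genuine gap is in how you propose to prove this $\fD$-module statement. Your plan is to recognize the diagram $[\PQ]\squigto\N_\PQ$ as an iterated abstract blow-up of $\N$ (a De Concini--Procesi wonderful model) and then invoke cdh descent; but the $\N_\PQ$ are not presented as blow-ups, the relevant squares are not obviously abstract blow-up squares, and you give no mechanism for producing them. The paper takes a completely different route: since each $\N_\PQ\to\N$ is schematic and proper, the colimit $\N_\Glued=\colim\N_\PQ$ is pseudo-proper over $\N$, and Proposition~\ref{prop:pseudo-proper-schemes} reduces the question to checking that $\Vect\to\Dmod(\N_\Glued\times_\N\{(\sigma,A)\})$ is an equivalence for every geometric point $(\sigma,A)$. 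This fiber is the glued Springer prestack $\Spr_{\Glued,\unip}^{\sigmaA}$ of \cite{AG2}, and its $\fD$-module contractibility is then established by an induction on the Levi (your inductive idea does survive here, as Proposition~\ref{prop:from-mixed-to-simpler-gluing}) together with a Weyl-group stratification argument (Section~\ref{ssec:weyl combinatorics}) that uses the excision Proposition~\ref{prop:excision} in full force. None of this Springer-fiber combinatorics appears in your outline, and it is the actual content of the proof; the blow-up analogy of the introduction is motivation, not method.
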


\sssec{}

In Example \ref{example trivial loc sys}, we will show that, by restricting to the trivial $G$-local system, the above theorem is related to the following local (that is, independent of the curve $X$) analogue.

\begin{thm} \label{thm:D-mod on Nilp local}
Let $\N_\g \subseteq \g$ be the nilpotent cone of the Lie algebra of $G$.
For each pair of standard parabolics $\PQ$, consider the natural correspondence
$$
G \times^P \u_P
\twoheadleftarrow
G \times^Q \u_P
\hto
G \times^Q \u_Q
$$
over $\N_\g$. 
Then the pullback functors yield an equivalence
\begin{equation} \label{eqn:theorem for nilcone}
\Dmod(\N_\g)
\simeq
\lim_{[\PQ ] \in \Tw(\Par^\op)^\op} \Dmod(G \times^Q \u_P).
\end{equation}
\end{thm}

\begin{rem}
For $G=\mathrm{GL}_2$, Theorem \ref{thm:D-mod on Nilp local} states that $\Dmod(\N_{\mathfrak{gl}_2})$ is equivalent to $\Dmod(\pt) \times_{\Dmod(\PP^1)} \Dmod(T^*(\PP^1))$, the DG category obtained from the diagram $\pt \leftto \PP^1 \hto T^*\PP^1$ by pullback. This situation is very similar to the blow-up situation of Section \ref{sssec: blow up situation} and it will be proven in the same way.
\end{rem}

\ssec{Automorphic gluing, a preview}

The geometric Langlands conjecture calls for an equivalence between $\ICoh_\N(\LSG)$ and the DG category $\Dmod(\Bun_\Gch)$ of $\fD$-modules on the stack $\Bun_\Gch(X)$ of $\Gch$-bundles on the same curve $X$. Here $\Gch$ is the Langlands dual group of $G$.
Under geometric Langlands, the spectral gluing theorem ought to correspond to a gluing statement for $\Dmod(\Bun_\Gch)$. We conclude the introduction with an informal\footnote{We plan to give a rigorous account of this material elsewhere.} discussion of such a statement, which we call \emph{automorphic gluing}.

\sssec{}

To reduce cuttler, let us swap $G$ with $\Gch$ and formulate a gluing conjecture for $\Dmod(\Bun_G)$.
A key ingredient is the \emph{tempered subcategory}, denoted by $\ltemp\Dmod(\Bun_G)$. The definition appears in \cite[Section 12]{AG1} and an equivalent characterization is given in \cite{omega-antitemp}; see also \cite[Sections 1.3-1.4]{shvcatHH}.
In general, we can define the tempered subcategory of any DG category $\C$ equipped with an action of the spherical monoidal DG category $\Sph_G$.
As with $\QCoh$ and $\ICoh$, there is a colocalization (the right adjoint is called the \virg{temperization functor}):
\begin{equation}  \label{eqn:temp fucntor}
\begin{tikzpicture}[scale=1.5]
\node (a) at (0,1) {$\ltemp\C$};
\node (b) at (1.5,1) {$\C$.};
\path[right hook ->,font=\scriptsize,>=angle 90]
([yshift= 1.5pt]a.east) edge node[above] {$ $ } ([yshift= 1.5pt]b.west);
\path[->>,font=\scriptsize,>=angle 90]
([yshift= -1.5pt]b.west) edge node[below] {$\temp$ } ([yshift= -1.5pt]a.east);
\end{tikzpicture}
\end{equation}
We denote by $\C^\circ$ the right orthogonal of $\ltemp\C$ inside $\C$: this might be called the \emph{anti-tempered subcategory}.

\sssec{}

For any $P \in \Par$, we also need to introduce a DG category $I(G,P)^{\mathrm{aut}}$ as follows. (The superscript \virg{$\mathrm{aut}$} stands for \virg{automorphic}.)
Consider the prestack $\Bun_G^{P-\gen}$ of $G$-bundles on $X$ equipped with a generic reduction to $P$, see \cite{Barlev} for the precise definition. 
There are canonical maps 
$$
\Bun_M \xleftarrow {\fq_P}  \Bun_P \xto{f_P} \Bun_G^{P - \gen},
$$
where $M$ is the Levi quotient of $P$. We set\footnote{This definition is taken from \cite{Outline}, but we warn the reader that the notation is different.}
$$
I(G,P)^{\mathrm{aut}}
:=
\Dmod(\Bun_G^{P-\gen}) 
\ustimes{\Dmod(\Bun_P)}
\ltemp\Dmod(\Bun_M),
$$
where the left arrow is the pullback $f_P^!$ and the right arrow is the (fully faithful) composition
$$
\ltemp\Dmod(\Bun_M) \hto \Dmod(\Bun_M) \xto{\q_P^!} \Dmod(\Bun_P).
$$
As in \cite[Section 6]{Outline}, there is an action of $\Sph_G$ on $I(G,P)$: this allows us to later take the tempered and anti-tempered subcategories of $I(G,P)$.

\sssec{}

Now, the automorphic analogue of the spectral gluing of \cite{AG2} states that the DG categories $I(G,P)^{\mathrm{aut}}$ assemble into a glued DG category 
$$
\Glue^{\mathrm{aut}}
:=
\underset{P \in \Par^\op}\laxlim \; 
I(G,P)^{\mathrm{aut}}
$$
and that there is a fully faithful functor
$\gamma^{\mathrm{aut}}:
\Dmod(\Bun_G) \to \Glue^{\mathrm{aut}}$.
The strong automorphic gluing conjecture amounts to an explicit description of the essential image of $\gamma^{\mathrm{aut}}$ as a limit over $\Tw(\Par)$, similarly to what is done in the present paper for the spectral side. Below, we give the details for $G$ of semisimple rank $1$.

\sssec{}

From now until the end of the introduction, let $G$ be of semisimple rank one.
In this case, $\Glue^{\mathrm{aut}}$ has two terms: $\ltemp\Dmod(\Bun_G)$ and 
$$
I(G,B)^{\mathrm{aut}}
\simeq
\Dmod(\Bun_G^{B-\gen}) 
\ustimes{\Dmod(\Bun_B)}
\Dmod(\Bun_T).
$$
Since $T := \on{Levi}(B)$ is abelian, the inclusion $\ltemp\Dmod(\Bun_T) \hto \Dmod(\Bun_T)$ is an equivalence.
The inclusion $I(G,B) \hto \Dmod(\Bun_G^{B-\gen})$ admits a continuous right adjoint; this allows to define the \emph{enhanced constant term} functor
$$
\CT_B^{\enh}:
\Dmod(\Bun_G) \xto{f_B^!}
\Dmod(\Bun_G^{B-\gen})
\tto
I(G,B).
$$
This functor is $\Sph_G$-linear and it is equipped with a (necessarily $\Sph_G$-linear) left adjoint, $\Eis_B^{\enh}$. In particular, $\CT_B^{\enh}$ preserves the (anti-)tempered subcategories.

\sssec{}

The weaker version of the automorphism gluing theorem states that the natural functor
\begin{equation} \label{eqn:weak aut gluing GL2}
\Dmod(\Bun_G)
\longto
\Glue \Bigt{ 
\ltemp\Dmod(\Bun_G)
\xto{\CT_B^{\enh}}
I(G,B)^{\mathrm{aut}}
}
\end{equation}
is fully faithful. One can check that such fully faithfulness boils down to the fully faithfulness of the functor
$$
\Dmod(\Bun_G)^\circ
\xto{\; \; \CT_B^\enh \; \;}
\Bigt{  I(G,B)^{\mathrm{aut}} }^\circ.
$$

\begin{example}
As evidence for the validity of the latter statement, consider the example of $\omega_{\Bun_G}$. This object has been proven to be anti-tempered (for any nonabelian $G$, not just in semisimple rank $1$) in \cite{omega-antitemp}. Now, the fact that the counit of the adjunction
$$
\Eis_B^\enh \CT_B^\enh (\omega_{\Bun_G}) \longto \omega_{\Bun_G}
$$
is an isomorphism is a quick consequence of the contractibility of the space of rational maps from $X$ to $G/B$, proven in \cite{contract} and \cite{Barlev}.
This example is very close to being a proof: indeed, for $G$ of rank one, we expect that $\Dmod(\Bun_G)^\circ$ is generated under colimits by $\omega_{\Bun_G}$.
\end{example}

\sssec{}

Now note that the functor appearing on the RHS of \eqref{eqn:weak aut gluing GL2} factors as
$$
\ltemp\Dmod(\Bun_G)
\to
\ltemp I(G,B)^{\mathrm{aut}}
\hto 
I(G,B)^{\mathrm{aut}}.
$$
Moreover, the inclusion $\ltemp I(G,B)^{\mathrm{aut}} \hto I(G,B)^{\mathrm{aut}}$ admits a right adjoint, see \eqref{eqn:temp fucntor}.
The strong automorphic gluing in semisimple rank $1$ reads as follows:

\begin{conj}
For $G$ of semisimple rank $1$, the commutative diagram
\begin{equation} 
\nonumber
\begin{tikzpicture}[scale=1.5]
\node (00) at (0,0) {$\ltemp\Dmod(\Bun_G)$};
\node (10) at (3.5,0) {$\ltemp I(G,B)^{\mathrm{aut}}$.};
\node (01) at (0,1) {$\Dmod(\Bun_G)$};
\node (11) at (3.5,1) {$I(G,B)^{\mathrm{aut}}$};
\path[->,font=\scriptsize,>=angle 90]
(00.east) edge node[above] {$\CT_B^\enh$} (10.west); 
\path[ ->,font=\scriptsize,>=angle 90]
(01.east) edge node[above] {$\CT_B^\enh$} (11.west); 
\path[->>,font=\scriptsize,>=angle 90]
(01.south) edge node[right] {$\temp$} (00.north);
\path[->>,font=\scriptsize,>=angle 90]
(11.south) edge node[right] {$\temp$} (10.north);
\end{tikzpicture}
\end{equation} 
is a fiber square.
\end{conj}

\sssec{}

This conjecture boils down to proving that the functor
$$
\Dmod(\Bun_G)^\circ
\xto{\; \; \CT_B^\enh \; \;}
\Bigt{  I(G,B)^{\mathrm{aut}} }^\circ
$$
is an equivalence. As above, we expect that $\Bigt{  I(G,B)^{\mathrm{aut}} }^\circ$ is generated under colimits by $\omega_{\Bun_G^{B-\gen}}$. This would make the equivalence manifest.

\ssec{Organization of the paper}

In Section \ref{sec:global complete intersect}, we construct the action of $\DSingY$ on $\ICoh(Y)$ and discuss some of its properties.
Section \ref{sec:strong spectral gluing} is devoted to the definition of the term appearing in the main Theorem \ref{mainthm-intro}: our glued DG category and of the gluing functor that ought to realize the equivalence.
Section \ref{sec:microlocal reformulation} explains how to reduce the proof of the main theorem to a simpler statement about categories of $\fD$-modules on schemes: this is made possible by the results of Section \ref{sec:global complete intersect}.
Finally, Section \ref{sec:proof} extends the combinatorics of \cite{AG2} to prove the statements left open in Section \ref{sec:microlocal reformulation}.

\ssec{Acknowledgements}

I would like to thank Dima Arinkin and Dennis Gaitsgory for their patient explanations and their insight: I owe them a great deal.
I am obliged to Ian Grojnowski and Sam Raskin for several useful conversations, as well as to the anonymous referees for their comments and corrections.

\sec{Global complete intersections} \label{sec:global complete intersect}

After a preliminary section on the \emph{shift of grading trick}, we discuss the relationship between ind-coherent sheaves and spaces of singularities in the case of global complete intersections.

\ssec{Shift of grading}

Consider the $\infty$-category 
$$
\Gm \rrep^{\mathsf{weak}} 
:=
(\QCoh(\Gm), \star) \mmod,
$$
see \cite{AG1} or \cite{Be}.
We will make extensive usage of the \virg{shift of grading} automorphism of $\Gm \rrep^{\mathsf{weak}}$, defined in \cite{AG1} and denoted by $\C \squigto \C^\shift$. We denote by $\C \squigto \C^\unshift$ its inverse.

\sssec{}

The definition of $\C^\shift$ goes as follows:
\begin{itemize}
\item
take the invariant category $\C^{\Gm}$, equipped with its action of $\Rep(\Gm)$;

\smallskip

\item
twist this action of $\Rep(\Gm)$ by the symmetric monoidal autoequivalence $\Rep(\Gm) \to \Rep(\Gm)$ induced by the assignment
$$
\{V_{i,n}\} 
\squigto
\{V_{i-2n,n}\},
$$
where $V_{i,n}$ denotes the component in cohomological degree $i$ and weight $n$ of a graded DG vector space $V$;

\smallskip

\item
finally, set $\C^\shift := \C^{\Gm} \otimes_{\Rep(\Gm)} \Vect$, where $\Gm$ acts trivially on $\Vect$.
\end{itemize}

\begin{example}
If $\Gm$ acts trivially on $\C$, then $\C^\shift \simeq \C$.
\end{example}

\begin{example} \label{example:Gm-action on algebras}

Let $A$ be a graded DG algebra.  Then $\C = A \mod$ is naturally an object of $\Gm \rrep^{\mathsf{weak}}$. As above, we write $A =\{A_{i,n}\}$ where $A_{i,n}$ is the component in cohomological degree $i$ and grading degree $n$.
We have:
$$
(A \mod)^\shift
= 
A^{\shift} \mod,
$$
where $A^{\shift}$ is the graded DG algebra with components $(A^{\shift})_{i,n} = A_{i+2n,n}$.
\end{example}

\sssec{}

The latter example shows that the shift/unshift automorphisms do not commute with the forgetful functor $\Gm \rrep \to \DGCat$. On the other hand, by construction, $(\C^{\shift})^\Gm$ and $\C^\Gm$ are equivalent as DG categories.

\sssec{}

Recall that $\Gm \rrep^{\mathsf{weak}}$ is symmetric monoidal (compatibly with the forgetful functor to $\DGCat$). The shift automorphism preserves the symmetric monoidal structure. Hence, it also preserves relative tensor products within $\Gm \rrep^{\mathsf{weak}} $.

\begin{example} \label{example:Koszul-duality}
Let $V$ be a finite dimensional vector space and consider the dilation $\Gm$-action on $V^*$, which corresponds to the tautological grading on $\Sym V$.
This induces $\Gm$-actions on $\QCoh(V^*)$, on $\Omega V := \pt \times_V \pt$ and on $\ICoh(\pt \times_V \pt)$. Under the Koszul duality equivalence
$$
\ICoh(\Omega V)
\simeq
(\Sym V[-2]) \mod,
$$
elements of $\Sym^n V$ are placed in bidegree $(2n,n)$. Hence, $\ICoh(\Omega V) \simeq \QCoh(V^*)^\unshift$. This equivalence swaps the convolution and the pointwise monoidal structures. By the theory of singular support, we deduce that $\QCoh(\Omega V) \simeq \QCoh((V^*)^\wedge_0)^\unshift$.
\end{example}

\begin{example}
In the situation of the previous example, there is also a $\Gm$-action on $\Dmod(V^*)$. We obtain an equivalence 
$$
\Dmod(V^*)^\unshift \simeq W_2(V^*) \mod,
$$
where $W_2(V^*)$ is the $2$-shifted Weyl algebra whose $\partial$ variables have bidegree $(-2, -1)$.
 The Fourier-Deligne transform yields an equivalence $\Dmod(V^*)^\unshift \simeq \Dmod(V)^\shift$.

\end{example}

\begin{prop}
Consider again the scheme $Y= \Omega V$ and recall the action of $\QCoh(Y)$ on $\ICoh(Y)$, as well as the action of $\QCoh(Y)$ on $\Vect$ given by pullback along the inclusion $\pt \hto \Omega V$. There is a canonical equivalence
$$
\ICoh(Y)
\usotimes{\QCoh(Y)}
\Vect 
\simeq 
\Dmod(V^*)^\unshift.
$$
\end{prop}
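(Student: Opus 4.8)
The plan is to use the shift-of-grading trick to reduce the statement to a purely classical assertion about $V^*$, its formal completion at the origin and $\Dmod(V^*)$, and then to recognise the latter as the de Rham incarnation of $\fD$-modules. First I would let $\Gm$ act on $V$ by dilations, as in Example~\ref{example:Koszul-duality}; this induces $\Gm$-actions on $Y=\Omega V$, hence on $\QCoh(Y)$ and $\ICoh(Y)$, the action on $\Vect$ being trivial. Since $\iota\colon\pt\hto\Omega V$ is the unit of the group $\Omega V$, it is $\Gm$-equivariant, so $\Vect$ is a $\Gm$-equivariant $\QCoh(Y)$-module via $\iota^*$. As the shift automorphism of $\Gm\rrep^{weak}$ preserves relative tensor products,
$$
\Bigt{\ICoh(Y)\usotimes{\QCoh(Y)}\Vect}^\shift
\;\simeq\;
\ICoh(Y)^\shift\usotimes{\QCoh(Y)^\shift}\Vect^\shift.
$$
By Example~\ref{example:Koszul-duality}, $\ICoh(Y)^\shift\simeq\QCoh(V^*)$ and $\QCoh(Y)^\shift\simeq\QCoh((V^*)^\wedge_0)$, while $\Vect^\shift\simeq\Vect$; so it suffices to produce a $\Gm$-equivariant equivalence
$$
\QCoh(V^*)\usotimes{\QCoh((V^*)^\wedge_0)}\Vect\;\simeq\;\Dmod(V^*),
$$
and then apply $(-)^\unshift$, using $\QCoh(V^*)^\unshift\simeq\ICoh(Y)$, $\QCoh((V^*)^\wedge_0)^\unshift\simeq\QCoh(Y)$ and $\Vect^\unshift\simeq\Vect$ to recover the assertion.

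Next I would identify the structures entering the left-hand side above. Crucially, $(V^*)^\wedge_0$ must be taken with its \emph{convolution} monoidal structure, i.e.\ regarded as the commutative formal group completing the vector group $V^*$ at $0$: the equivalence $\ICoh(\Omega V)\simeq\QCoh(V^*)^\unshift$ of Example~\ref{example:Koszul-duality} carries the convolution product on $\ICoh(\Omega V)$ (from the group law of $\Omega V$) to the pointwise product on $\QCoh(V^*)$, so after restricting along the nilpotent-singular-support inclusion $\QCoh(Y)\hto\ICoh(Y)$ the \emph{pointwise} product on $\QCoh(Y)$ — which is exactly the one entering the $\QCoh(Y)$-module structure on $\ICoh(Y)$ — goes over to the \emph{convolution} product on $\QCoh((V^*)^\wedge_0)$. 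Transporting the module action and the counit data through the same equivalences, one checks that the $\QCoh(Y)$-module $\ICoh(Y)$ becomes $\QCoh(V^*)$ with the action of the formal group $(V^*)^\wedge_0\subset V^*$ by translations, and that the monoidal functor $\iota^*\colon\QCoh(Y)\to\Vect$ becomes the augmentation $\QCoh((V^*)^\wedge_0)\to\Vect$ presenting $\Vect$ as the trivial module. Hence the relative tensor product in question is the coinvariants category $\QCoh(V^*)_{(V^*)^\wedge_0}$ for the translation action.

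Finally I would invoke the de Rham description of $\fD$-modules. The formal group $(V^*)^\wedge_0$ is pro-unipotent, so coinvariants and invariants of its $\QCoh$-module categories agree, and hence
$$
\QCoh(V^*)_{(V^*)^\wedge_0}\;\simeq\;\QCoh(V^*)^{(V^*)^\wedge_0}\;\simeq\;\QCoh\bigt{V^*/(V^*)^\wedge_0}.
$$
The quotient prestack $V^*/(V^*)^\wedge_0$ is presented by the groupoid whose arrows are the formal completion $(V^*\times V^*)^\wedge_\Delta$ of the diagonal — equivalently, $\QCoh$ applied termwise to the bar/Čech nerve of the atlas $V^*\to V^*/(V^*)^\wedge_0$, whose $n$-th term is $V^*\times((V^*)^\wedge_0)^{\times n}$ — and this is precisely the groupoid presenting the de Rham prestack $(V^*)_\dR$. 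Therefore $\QCoh(V^*/(V^*)^\wedge_0)\simeq\QCoh((V^*)_\dR)\simeq\Dmod(V^*)$, which completes the reduction.

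The hard part will be the second step: Example~\ref{example:Koszul-duality} records the (singular-support) Koszul-dual identifications only at the level of monoidal categories, whereas here one must transport along them the \emph{module} category $\ICoh(Y)$ together with the specific augmentation functor $\iota^*$, and verify that the outcome is the translation action with its trivial module — this amounts to unwinding the compatibility of Koszul duality with the group laws of $\Omega V$ and of $(V^*)^\wedge_0$. The first and last steps are then formal, resting only on the stated properties of $(-)^\shift$, on pro-unipotent descent, and on the definition of $\Dmod$ of a smooth scheme as $\QCoh$ of its de Rham prestack.
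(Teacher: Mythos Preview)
Your proposal is correct and follows the same route as the paper: apply the Koszul duality identifications of Example~\ref{example:Koszul-duality} together with the symmetric monoidality of the shift to reduce to $\QCoh(V^*)\usotimes{\QCoh((V^*)^\wedge_0)}\Vect \simeq \QCoh(V^*/(V^*)^\wedge_0) \simeq \Dmod(V^*)$, then unshift. The paper is terser on the points you flag as the ``hard part'' (transport of the module structure and the de Rham identification), dispatching the last step with a one-line appeal to formal smoothness of $(V^*)^\wedge_0$ and formal descent; your added care there is warranted but not a different argument.
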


\begin{proof}
We use the above Koszul duality equivalence, together with the fact that the shift automorphism is symmetric monoidal:  
\begin{eqnarray}
\ICoh(Y)
\usotimes{\QCoh(Y)}
\Vect 
& \simeq & 
\QCoh(V^*)^\unshift
\usotimes{\QCoh((V^*)^\wedge_0)^\unshift}
\Vect 
\\
\nonumber
& \simeq & 
\Bigt{ 
\QCoh(V^*)
\usotimes{\QCoh((V^*)^\wedge_0)}
\Vect
}
^\unshift
\nonumber
\\
& \simeq & 
\QCoh
\bigt{ 
V^*/{(V^*)^\wedge_0}
}
^\unshift
\nonumber
\\
& \simeq & 
\Dmod(V^*)^\unshift, 
\nonumber
\end{eqnarray}
where the third equivalence follows (for example) by formal smoothness of $(V^*)^\wedge_0$, together with proper descent for $\ICoh$.
\end{proof}

\begin{rem}
In fact, the above equivalence is the simplest nontrivial instance of the equivalence
\begin{equation}
\ICoh(\Y)
\usotimes{\QCoh(\Y)}
\QCoh(y)
\simeq
\Dmod
\bigt{  \!
\restr{\Sing(\Y)}y
\!
}
^\unshift,
\end{equation}
valid for any quasi-smooth stack and any $\kk$-point $y \in \Y$. This will be addressed in another paper.
\end{rem}

\ssec{Singular support for global complete intersections} \label{ssec:global complete intersect}

We show that for $Y$ a global complete intersection (see the definition below), the DG category $\ICoh(Y)$ admits an action of $\DSingY$, which encodes the notion of singular support for coherent sheaves.

\sssec{}

Recall from \cite[Section 2.3]{AG1} that, for a quasi-smooth scheme $Y$, the scheme of singularities $\Sing(Y)$ is a classical scheme living over $Y^\cl$, the classical truncation of $Y$. It is defined as the relative spectrum of the $\O_{Y^\cl}$-algebra $\Sym_{\O_{Y^\cl}} H^1(\Tang_Y)$.
We always equip $\Sing(Y)$ with the $\Gm$-action coming from the obvious grading of the above symmetric algebra.
Such $\Gm$-action induces a strong (and in particular a weak) $\Gm$-action on $\Dmod(\Sing(Y))$: this structure allows us to consider the shifted DG category $\DSingY$.

\sssec{}

We say that a DG scheme $Y$  is a \emph{global complete intersection}\footnote{We warn the reader that this definition of global complete intersection is not standard: for us, the presentation as a fiber product is part of the data.} if it is presented as a fiber product $Y = U \times_V \pt$, with $U$ smooth affine and $V\simeq \AA^n$ a vector space.
A global complete intersection is obviously quasi-smooth and conversely any quasi-smooth scheme is Zariski locally of this form, see e.g. \cite[Corollary 2.1.6]{AG1}.

\sssec{} \label{sssec:singularities and ICOhzero}

Let $Y = U \times_V \pt$ be a global complete intersection. Such $Y$, together with its presentation, is regarded as fixed throughout the remainder of Section \ref{sec:global complete intersect}. Observe that
$$
\Sing(Y) \simeq (Y \times V^*) \times_{T^* U} U
$$
as classical schemes, where the left map in the fiber product is the dual of the differential and the right one is the zero section.
In particular, we have $\Gm$-equivariant closed embeddings
$$
\Sing(Y) \hto Y \times V^* \hto U \times V^*,
$$
where the $\Gm$-action on $V^*$ and on $T^*U$ is dilation (along the fibers of $T^*U \to U$ in the second case).

\sssec{}

Consider the pullback action of $\QCoh(U) \simeq \ICoh(U)$ on $\ICoh(Y)$, as well as the convolution action of $\ICoh(\pt \times_V \pt)$ on $\ICoh(Y)$. A simple diagram chase shows that these two actions commute.
It follows that
$$
(\QCoh(U), \otimes) \otimes (\ICoh(\pt \times_V \pt), \star)
$$
acts on $\ICoh(Y)$.
The Koszul duality equivalence 
$$
\bigt{ \ICoh(\pt \times_V \pt), \star }
\simeq 
\bigt{
\QCoh(V^*)^\unshift, \otimes
}
$$
transforms this into an action of
$$
\bigt{
\QCoh(U \times V^*)^\unshift, \otimes
}
$$
on $\ICoh(Y)$. From a slightly different point of view, this action is discussed in \cite[Section 5.4.6]{AG1}.

\sssec{}

By \cite[Corollary 5.4.7]{AG1}, the above action of $\QCoh(U \times V^*)^\unshift$ factors through the monoidal colocalization   
$$
\QCoh(U \times V^*)^\unshift
\tto
\QCoh
\bigt{
(U \times V^*)^\wedge_{\Sing(Y)}
}^\unshift.
$$
Then the monoidal functor
$$
\Dmod(\Sing(Y))^\unshift
\longto
\QCoh
\bigt{
(U \times V^*)^\wedge_{\Sing(Y)}
}^\unshift,
$$
induced by pullback along 
$$
(U \times V^*)^\wedge_{\Sing(Y)} \to \Sing(Y)_\dR,
$$
yields an action of $(\DSingY, \otimes)$ on $\ICoh(Y)$: this is the action we were looking for.

\begin{cor} \label{cor:singsupp in terms of Dmod unshift for global comple intersection}
For $Y$ as above and $N$ a closed conical subset of $\Sing(Y)$, the following full subcategories of $\ICoh(Y)$ are equivalent:
$$
\ICoh_{N}(Y)
\simeq
\Dmod(N)^{\unshift}
\usotimes
{\Dmod(\Sing(Y))^\unshift}
\ICoh(Y).
$$
\end{cor}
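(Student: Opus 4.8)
The plan is to deduce the corollary from Proposition~\ref{prop:DSingY action on global complete intersections} by tracing the construction of the $\Dmod(\Sing(Y))^\unshift$-action given in Section~\ref{ssec:singularities and ICOhzero} and matching it with the definition of singular support from \cite{AG1}. Write $Z := Y \times V^*$, a $\Gm$-scheme with $\Gm$ acting by scaling the $V^*$-factor; it contains $\Sing(Y)$, and hence the conical subset $N$, as closed subschemes. By construction, the action of $\Dmod(\Sing(Y))^\unshift$ on $\ICoh(Y)$ is obtained by restricting, along the de Rham pullback
$$
\Dmod(\Sing(Y))^\unshift \longto \QCoh\bigt{Z^\wedge_{\Sing(Y)}}^\unshift,
$$
the Koszul-dual action of $\QCoh(Z)^\unshift$ on $\ICoh(Y)$, which itself factors (as recalled in Section~\ref{ssec:singularities and ICOhzero}) through the colocalization $\QCoh(Z)^\unshift \tto \QCoh\bigt{Z^\wedge_{\Sing(Y)}}^\unshift$.

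First I would record that, by the definition of singular support in \cite{AG1} and its compatibility with the presentation $Y = U \times_V \pt$, the subcategory $\ICoh_N(Y) \subseteq \ICoh(Y)$ is exactly the full subcategory of objects set-theoretically supported on $N \subseteq Z$ with respect to the $\QCoh(Z)^\unshift$-action above. Since this action already factors through $\QCoh\bigt{Z^\wedge_{\Sing(Y)}}^\unshift$, and $\QCoh\bigt{Z^\wedge_N}^\unshift$ is an idempotent algebra over the latter, one obtains a $\QCoh(Y)$-linear identification
$$
\ICoh_N(Y) \simeq \QCoh\bigt{Z^\wedge_N}^\unshift \usotimes{\QCoh(Z^\wedge_{\Sing(Y)})^\unshift} \ICoh(Y),
$$
compatible with the inclusions into $\ICoh(Y)$. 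By associativity of relative tensor products, the corollary then reduces to an identification of $\QCoh\bigt{Z^\wedge_{\Sing(Y)}}^\unshift$-algebras, namely
$$
\QCoh\bigt{Z^\wedge_N}^\unshift \simeq \Dmod(N)^\unshift \usotimes{\Dmod(\Sing(Y))^\unshift} \QCoh\bigt{Z^\wedge_{\Sing(Y)}}^\unshift.
$$

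To prove this I would first discard the shift: the shift automorphism of $\Gm \rrep^{weak}$ is symmetric monoidal, hence preserves relative tensor products, and all the objects involved carry compatible $\Gm$-actions, so it suffices to establish $\QCoh\bigt{Z^\wedge_N} \simeq \Dmod(N) \usotimes{\Dmod(\Sing(Y))} \QCoh\bigt{Z^\wedge_{\Sing(Y)}}$. Writing $\Dmod(-) = \QCoh((-)_\dR)$ and applying base change for $\QCoh$ along the closed embedding $N_\dR \hto \Sing(Y)_\dR$ (legitimate since $\Sing(Y)_\dR$ is $1$-affine), the right-hand side becomes $\QCoh\bigt{Z^\wedge_{\Sing(Y)} \times_{\Sing(Y)_\dR} N_\dR}$; a functor-of-points check then identifies this fiber product with $Z^\wedge_N$, since an $S$-point of it is a map $S \to Z$ whose reduction factors through $\Sing(Y)_\red$ together with a further factorization of $S_\red \to \Sing(Y)$ through $N_\red$ --- that is, simply a map $S \to Z$ with $S_\red \to Z$ landing in $N_\red$. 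The main obstacle is the first step: one must verify that the intrinsically-defined singular support of \cite{AG1} is genuinely computed by the Koszul-dual $\QCoh(V^*)^\unshift$-action underlying Proposition~\ref{prop:DSingY action on global complete intersections}, i.e.\ that the construction of Section~\ref{ssec:singularities and ICOhzero} is compatible with the theory of singular support; once that is in place, the rest is formal bookkeeping with base change and the monoidal shift automorphism.
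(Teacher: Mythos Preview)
Your proposal is correct and is essentially the same approach as the paper's, just unpacked in far more detail: the paper's entire proof is the single sentence ``This is tautological from the definition of singular support for ind-coherent sheaves,'' which takes for granted exactly the compatibility you flag as the ``main obstacle'' in your first step. In other words, you have carefully spelled out the tautology (the Koszul-dual $\QCoh(Y \times V^*)^\unshift$-action is by construction the one computing singular support in \cite{AG1}, and passing from set-theoretic support to the tensor-product description is standard base change), whereas the paper simply invokes it.
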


\begin{proof}
Tautological from \cite[Corollary 5.4.7]{AG1}.
\end{proof}

\begin{example} \label{example:QCoh = zero section}
In particular:
$$
\QCoh(Y)
\simeq
\Dmod(Y)
\usotimes
{\Dmod(\Sing(Y))^\unshift}
\ICoh(Y).
$$
\end{example}

\begin{example} \label{example:relation with AG2}

Let $\Sing(Y)^\circ := \Sing(Y) - O_Y$ be the complement of the zero section and $\ICoh(Y)^\circ$ the quotient $\ICoh(Y)/\QCoh(Y)$. We have:
\begin{equation} \label{eqn:relation with AG2}
\ICoh(Y)^\circ
\simeq
\Dmod(\Sing(Y)^\circ)^\unshift
\usotimes{\DSingY}
\ICoh(Y).
\end{equation}
The symmetric monoidal forgetful functor 
$$
\Dmod(\PP\Sing(Y))
\simeq
\Dmod(\Sing(Y)^\circ)^{\Gm, \mathit{strong}}
\to
\Dmod(\Sing(Y)^\circ)^{\Gm}
\simeq
\bigt{
\Dmod(\Sing(Y)^\circ)^\unshift
}^{\Gm}
\to
\Dmod(\Sing(Y)^\circ)^\unshift
$$ 
yields an action of $\Dmod(\PP\Sing(Y))$ on $\ICoh(Y)^\circ$, which is the one constructed in \cite{AG2}.

\end{example}

\ssec{The singular codifferential} \label{ssec:singular codiff}

In this section, we use the $\DSingY$-action on $\ICoh(Y)$ constructed above to study the DG category $\ICoh_0(Y^\wedge_X) \simeq \Hcorr Y X \pt$.

\sssec{}

Let $Y =  U \times_V \pt$ be a global complete intersection as before, $X$ a quasi-smooth scheme, and $f: X \to Y$ an arbitrary map.
We consider the standard correspondence
$$
\Sing(X)
\xleftarrow{\fs_f}
X  \times_Y \Sing(Y)
\xto{\ft_f}
\Sing(Y),
$$
where the left map is called \emph{singular codifferential}, see \cite[Section 2.4]{AG1}.

\sssec{}

The exterior tensor product yields a natural equivalence
$$
\ICoh(Y^\wedge_X)
\xleftarrow{\simeq}
\ICoh(Y)
\usotimes{\Dmod(Y)}
\Dmod(X).
$$ 
To see this, it suffices to combine \cite[Proposition 3.1.2]{AG2} with the $1$-affineness of $Y_\dR$ and \cite[Proposition 3.1.9]{ShvCat}.
In our case, since $\Dmod(Y)$ acts on $\ICoh(Y)$ via $\Dmod(Y) \to \DSingY$ the monoidal pullback functor, we obtain that
\begin{equation} \label{eqn:ICoh-formal via Sing}
\ICoh(Y^\wedge_X)
\simeq
\ICoh(Y)
\usotimes{\DSingY}
\Dmod(X \times_Y \Sing(Y))^\unshift.
\end{equation}
The next result shows that $\ICoh_0(Y^\wedge_X)$ can be expressed is a similar way.

\begin{prop} \label{prop:ICohzero via DSing}
Let $X$ be a quasi-smooth scheme equipped with a map $f: X \to Y$. 
Under the equivalence \eqref{eqn:ICoh-formal via Sing}, the subcategory 
$$
\ICoh_0(Y^\wedge_X) \subseteq \ICoh(Y^\wedge_X)
$$ 
identifies with the subcategory
$$
\ICoh(Y)
\usotimes{\Dmod(\Sing(Y))^\unshift}
\Dmod( \fs_f^{-1}(O_X))^\unshift
\subseteq
\ICoh(Y)
\usotimes{\Dmod(\Sing(Y))^\unshift}
\Dmod(X \times_Y \Sing(Y))^\unshift
$$
induced by the inclusion $\fs_f^{-1}(O_X) \subseteq X \times_Y \Sing(Y)$.
\end{prop}

\begin{proof}
Setting
\begin{eqnarray}
\nonumber
& & \ICoh(Y)^\circ  := \ICoh(Y)/\QCoh(Y)
\\
 & & 
 \nonumber
 \ICoh(Y^\wedge_X)^\circ := \ICoh(Y^\wedge_X)/\QCoh(Y^\wedge_X) \\
  & & 
 \nonumber
 \ICoh_0(Y^\wedge_X)^\circ := \ICoh_0(Y^\wedge_X)/\QCoh(Y^\wedge_X),
\end{eqnarray}
it is easy to see that the natural functor
$$
 \ICoh_0(Y^\wedge_X)
 \longto
  \ICoh(Y^\wedge_X)
  \ustimes{ \ICoh(Y^\wedge_X)^\circ}
   \ICoh_0(Y^\wedge_X)^\circ
$$
is an equivalence. Now, the Arinkin-Gaitsgory action of $\Dmod(\PP\Sing(Y))$ on $\ICoh(Y)^\circ$, see Example \ref{example:relation with AG2}, yields
$$
 \ICoh(Y^\wedge_X)^\circ \simeq \ICoh(Y)^\circ 
 \usotimes{\Dmod(\bbP \Sing(Y))}
 \Dmod(X \times_Y \bbP \Sing(Y) ).
$$
By \cite[Section 3.2.10]{AG2}, we also have
$$
 \ICoh_0(Y^\wedge_X)^\circ \simeq \ICoh(Y)^\circ 
 \usotimes{\Dmod(\bbP \Sing(Y))}
 \Dmod( \bbP \fs_f^{-1}(O_X)).
$$
Then the assertion follows by plugging in \eqref{eqn:relation with AG2}.
\end{proof}

\begin{rem}
In the above proposition, we do not require that $X$ be a global complete intersection.
\end{rem}

\sssec{}

Let $W \xto{e} X \xto{f} Y$ be a string of quasi-smooth schemes, with $Y$ a global complete intersection as always in this section. Observe first that there is a natural correspondence
\begin{equation} \label{eqn:correspondece for a cor}
\fs_f^{-1}(O_X) \leftto W \times_X \fs_f^{-1}(O_X) \hto  \fs_{f \circ e}^{-1}(O_W),
\end{equation}
where we emphasize that the right arrow $W \times_X \fs_f^{-1}(O_X) \hto  \fs_{f \circ e}^{-1}(O_W)$ is a closed embedding. Observe also that the pullback functor $\xi^!: \ICoh(Y^\wedge_X) \to \ICoh(Y^\wedge_W)$ preserves the $\ICoh_0$-subcategories, that is, it restricts to a functor 
$$
\xi^{!,0}:
\ICoh_0(Y^\wedge_X) \longto
\ICoh_0(Y^\wedge_W).
$$ 

\begin{cor} 
Under the equivalences
\begin{eqnarray}
& & 
\ICoh_0(Y^\wedge_X)  
\simeq
\ICoh(Y)
\usotimes{\DSingY }
\Dmod( \fs_f^{-1}(O_X))^\unshift \\
& & 
\ICoh_0(Y^\wedge_W) 
\simeq
\ICoh(Y)
\usotimes{\DSingY  }
\Dmod( \fs_{f \circ e}^{-1}(O_W))^\unshift
\end{eqnarray}
of the above proposition, the pullback functor $\xi^{!,0}$ is induced by the $\Dmod^\unshift$-module pull-push along the correspondence \eqref{eqn:correspondece for a cor}.
\end{cor}

\begin{proof}
This is simply because the equivalence
$$
\ICoh(Y^\wedge_X)
\simeq
\ICoh(Y)
\usotimes{\DSingY}
\Dmod(X \times_Y \Sing(Y))^\unshift
$$
is functorial in $X$ under pullbacks. 
\end{proof}

\sec{Strong spectral gluing} \label{sec:strong spectral gluing}

In this section, we construct our glued DG category
$$
\lim_{[\PQ] \in \Tw(\Par)^\op}
\,
\ICoh_0((\LSG)^\wedge_{\LSQ})^{P-\temp}
$$
and the gluing functor
$$
\ICoh_\N(\LSG)
\longto
\lim_{[\PQ] \in \Tw(\Par)^\op}
\,
\ICoh_0((\LSG)^\wedge_{\LSQ})^{P-\temp}.
$$
The construction follows a general paradigm, which we eventually apply to the case of local systems.

\ssec{Some preliminary constructions}

\sssec{}

For $f:X \to Y$ a map of quasi-smooth stacks, denote by $X \xto{'f} Y^\wedge_X \xto{\wh f} Y$ the associated factorization through the formal completion.
The inclusion
$$
\Xi_{Y^\wedge_X}: \QCoh(Y^\wedge_X)
\hto
\ICoh(Y^\wedge_X)
$$
obviously factors as
$$
 \QCoh(Y^\wedge_X)
 \hto
  \ICoh_0(Y^\wedge_X)
  \hto
   \ICoh(Y^\wedge_X).
$$
Abusing notation, the first inclusion $\QCoh(Y^\wedge_X) \hto \ICoh_0(Y^\wedge_X)$ will be denoted by $\Xi_{Y^\wedge_X}$ as well. 
Similarly, the right adjoint $\Psi_{Y^\wedge_X}: \ICoh(Y^\wedge_X) \tto
\QCoh(Y^\wedge_X)$ induces a right adjoint $\ICoh_0(Y^\wedge_X) \tto
\QCoh(Y^\wedge_X)$, denoted by the same symbol.

\begin{rem}
Let us show that the above functor
$$
\Psi_{Y^\wedge_X}: \ICoh_0(Y^\wedge_X)  \longto \QCoh(Y^\wedge_{X}) 
$$
is $\Dmod(X)$-linear. This fact will be used implicitly later, especially at the end of Lemma \ref{lem: right adjojntables}. 
Factoring $\Psi$ as
$$
\ICoh_0(Y^\wedge_X) 
\hto
\ICoh(Y^\wedge_X) 
 \tto \QCoh(Y^\wedge_{X})
$$
and noticing that the left arrow is $\Dmod(X)$-linear, it suffices to treat the second arrow. As the latter is right adjoint to a $\Dmod(X)$-linear functor, it is a priori lax $\Dmod(X)$-linear. To verify that such lax linearity is actually strict, it suffices to work smooth-locally on $Y$. Thus, we may assume that $Y$ is a scheme. In this case, we are dealing with the functor
$$
\ICoh(Y) \usotimes{\Dmod(Y)} \Dmod(X)
\xto{\Psi_Y \otimes \id_{\Dmod(X)}}
\QCoh(Y) \usotimes{\Dmod(Y)} \Dmod(X),
$$
which is evidently $\Dmod(X)$-linear.
\end{rem}

\sssec{~A technical note.}

Recall the standard functor $\Upsilon$ defined in \cite{ICoh} and \cite{Book}.
There are two possible realizations of $\ICoh_0(Y^\wedge_X)$: the one of \cite{AG2} uses the embedding $\Xi_X: \QCoh(X) \hto \ICoh(X)$, while the one of \cite{centerH} uses the embedding $\Upsilon_X: \QCoh(X) \to \ICoh(X)$. For a moment, let us denote the former by $\ICoh_0^{(\Xi)}(Y^\wedge_X)$ and the latter by $\ICoh_0^{(\Upsilon)}(Y^\wedge_X)$. These two DG categories are obviously equivalent (indeed, for $X$ is quasi-smooth, the functors $\Xi_X$ and $\Upsilon_X$ differ only by a shifted line bundle), however their functoriality under pullbacks is slightly different.
To be consistent with \cite{AG2}, we use the $\Xi$-realization. For this reason, the functoriality of $\ICoh_0$ developed in \cite{centerH} must be reinterpreted accordingly, that is, by conjugating with the natural equivalences $\sigma: \ICoh_0^{(\Xi)} \xto{\simeq} \ICoh_0^{(\Upsilon)}$ of \cite[Proposition 3.2.4]{centerH}.
Practically, all we need to know is that such $\sigma_{Y^\wedge_X}$ intertwines the inclusion $\Xi_{Y^\wedge_X}$ with the inclusion $\Upsilon_{Y^\wedge_X}$.

\sssec{}

The $\H(Y)$-linear pullback functor $(\wh f)^{!,0}: \QCoh(Y) \simeq \ICoh_0(Y^\wedge_Y) \to \ICoh_0(Y^\wedge_X)$ factors as the composition
\begin{equation} 	\nonumber
\begin{tikzpicture}[scale=1.5]
\node (a) at (0,1) {$\QCoh(Y)$};
\node (b) at (2,1) {$\QCoh(Y^\wedge_X)$};
\node (c) at (4,1) {$\ICoh_0(Y^\wedge_X)$};
\path[->,font=\scriptsize,>=angle 90]
([yshift= 1.5pt]a.east) edge node[above] {$(\wh f)^*$} ([yshift= 1.5pt]b.west);
\path[right hook ->,font=\scriptsize,>=angle 90]
([yshift= 1.5pt]b.east) edge node[above] {$ \Xi_{Y^\wedge_X}$} ([yshift= 1.5pt]c.west);
\end{tikzpicture}
\end{equation}
of two $\H(Y)$-linear functors. We thus have an $\H(Y)$-linear diagram
\begin{equation} 	\nonumber
\begin{tikzpicture}[scale=1.5]
\node (a) at (0,1.2) {$\QCoh(Y) $};
\node (b) at (0,0) {$\QCoh(Y^\wedge_X)$};
\node (c) at (3,0) {$\ICoh_0(Y^\wedge_X)$,};
\path[->,font=\scriptsize,>=angle 90]
([yshift= 1.5pt]a.south) edge node[left] { $(\wh f)^{*}$} ([yshift= 1.5pt]b.north);
\path[right hook ->,font=\scriptsize,>=angle 90]
([yshift= 1.5pt]b.east) edge node[above] {$ \Xi_{Y^\wedge_X}$} ([yshift= 1.5pt]c.west);
\path[->>,font=\scriptsize,>=angle 90]
([yshift= -1.5pt]c.west) edge node[below] {$\Psi_{Y^\wedge_X}$} ([yshift= -1.5pt]b.east);
\path[->,font=\scriptsize,>=angle 90]
([yshift= 1.5pt]a.east) edge node[above] { $(\wh f)^{!,0}$} ([yshift= 1.5pt]c.north);
\end{tikzpicture}
\end{equation}
with both triangles commutative.

\sssec{}

Assume now that $X  \xto f Y$ is a map of quasi-smooth stacks over a third quasi-smooth stack $Z$. Denote by $\wt f: Z^\wedge_X \to Z^\wedge_Y$ the induced arrow. We will tensor up the above diagram with $\Hsx ZY$ over $\H(Y)$, see Section \ref{sssec:Review H}.
Thanks to the two natural equivalences\footnote{these are instances of the main theorem of \cite{shvcatHH}, see also the fourth item of the list in Section \ref{sssec:Review H}}
$$
\Hsx ZY \usotimes{\H(Y)} \QCoh(Y)
\simeq
\ICoh_0(Z^\wedge_Y)
$$
$$
\Hsx ZY \usotimes{\H(Y)} \ICoh_0(Y^\wedge_X)
\simeq
\ICoh_0(Z^\wedge_X),
$$
we obtain the diagram
\begin{equation}
\begin{minipage}{0.5\linewidth}
\begin{tikzpicture}[scale=1.5] \label{diag:triangle with mixed pullback}
\node (a) at (0,1.2) {$\ICoh_0(Z^\wedge_Y)$};
\node (b) at (0,0) {$\Hsx ZY \otimes_{\H(Y)}\QCoh(Y^\wedge_X)$};
\node (c) at (3.7,0) {$\ICoh_0(Z^\wedge_X)$,};
\path[->,font=\scriptsize,>=angle 90]
([yshift= 1.5pt]a.south) edge node[left] { mixed pullback} ([yshift= 1.5pt]b.north);
\path[right hook ->,font=\scriptsize,>=angle 90]
([yshift= 1.5pt]b.east) edge node[above] {$\id \otimes \Xi_{Y^\wedge_X}$} ([yshift= 1.5pt]c.west);
\path[->>,font=\scriptsize,>=angle 90]
([yshift= -1.5pt]c.west) edge node[below] {$\id \otimes \Psi_{Y^\wedge_X}$} ([yshift= -1.5pt]b.east);
\path[->,font=\scriptsize,>=angle 90]
([yshift= 1.5pt]a.east) edge node[above] { $(\wt f)^{!,0}$}
 ([yshift= 1.5pt]c.north);
\end{tikzpicture}
\end{minipage}
\end{equation}
with both triangles again commutative.

\sssec{}

We need a notation for the DG category appearing on the bottom left of the above diagram and for the vertical arrow. We set:
$$
  \ICoh_0(Z^\wedge_X)^{Y-\temp}
  :=
\Hsx ZY \otimes_{\H(Y)}\QCoh(Y^\wedge_X).
$$
When regarding $\ICoh_0(Z^\wedge_X)^{Y-\temp}$ as a full subcategory of $ \ICoh_0(Z^\wedge_X)$, we refer to it as the \emph{$Y$-tempered} subcategory. 
The next lemma gives an explicit characterization of $  \ICoh_0(Z^\wedge_X)^{Y-\temp}$.

\begin{lem} \label{lem:easy charact of ICoh zero temp.}
The inclusion $  \ICoh_0(Z^\wedge_X)^{Y-\temp} \hto   \ICoh_0(Z^\wedge_X)$ restricts to the equivalence
$$
  \ICoh_0(Z^\wedge_X)^{Y-\temp}
\simeq
    \ICoh_0(Z^\wedge_X)
    \ustimes
    {
      \ICoh_0(Y^\wedge_X)
    }
       \QCoh(Y^\wedge_X).
$$

\end{lem}

\begin{proof}
We proceed as in \cite[Corollary 3.2.5]{AG2}. A routine base-change computation shows that the adjunction
\begin{equation} 	\nonumber
\begin{tikzpicture}[scale=1.5]
\node (a) at (0,1) {$ \ICoh_0(Y^\wedge_X) $};
\node (b) at (3,1) {$\Hsx ZY \otimes_{\H(Y)} \ICoh_0(Y^\wedge_X)$};
\path[ ->,font=\scriptsize,>=angle 90]
([yshift= 1.5pt]a.east) edge node[above] {$ $ } ([yshift= 1.5pt]b.west);
\path[->,font=\scriptsize,>=angle 90]
([yshift= -1.5pt]b.west) edge node[below] {$ $ } ([yshift= -1.5pt]a.east);
\end{tikzpicture}
\end{equation}
induced by $\H(Y) \rightleftarrows \Hsx ZY$ goes over, 
under the equivalence
$$
\Hsx ZY \otimes_{\H(Y)} \ICoh_0(Y^\wedge_X)
\simeq
\ICoh_0(Z^\wedge_X),
$$
to the monadic adjunction
\begin{equation} 	\nonumber
\begin{tikzpicture}[scale=1.5]
\node (a) at (0,1) {$ \ICoh_0(Y^\wedge_X) $};
\node (b) at (3,1) {$ \ICoh_0(Z^\wedge_X)$.};
\path[ ->,font=\scriptsize,>=angle 90]
([yshift= 1.5pt]a.east) edge node[above] {$(\wt f)_*^{\ICoh}$ } ([yshift= 1.5pt]b.west);
\path[->,font=\scriptsize,>=angle 90]
([yshift= -1.5pt]b.west) edge node[below] {$(\wt f)^{!,0}$ } ([yshift= -1.5pt]a.east);
\end{tikzpicture}
\end{equation}
As a consequence of the computation, the functor $(\wt f)_*^{\ICoh}$ preserves the $\ICoh_0$-subcategories. 
It suffices to check that the latter adjunction restricts to an adjunction
\begin{equation} 	\nonumber
\begin{tikzpicture}[scale=1.5]
\node (a) at (0,1) {$ \QCoh(Y^\wedge_X) $};
\node (b) at (4,1) {$  \ICoh_0(Z^\wedge_X)
    \times_
    {
      \ICoh_0(Y^\wedge_X)
    }
       \QCoh(Y^\wedge_X).$};
\path[ ->,font=\scriptsize,>=angle 90]
([yshift= 1.5pt]a.east) edge node[above] { } ([yshift= 1.5pt]b.west);
\path[->,font=\scriptsize,>=angle 90]
([yshift= -1.5pt]b.west) edge node[below] {  } ([yshift= -1.5pt]a.east);
\end{tikzpicture}
\end{equation}
This is clear for the right adjoint. As for the left adjoint, it suffices to prove that the monad $(\wt f)^! (\wt f)_*^{\ICoh}$ preserves $\QCoh(Y^\wedge_X)$. The monad in question admits a nonnegative filtration with associated graded being the functor of tensoring with $\Sym (\Tang^\QCoh_{Y/Z})$, in the sense of the action of $\QCoh(Y)$ on $\ICoh(Y^\wedge_X)$ by pullback. The assertion follows. 
\end{proof}

\sssec{}

We denote by
$$
(\wt f)^{!,Y-\temp}:
\ICoh_0(Z ^\wedge_Y)
\longto
  \ICoh_0(Z^\wedge_X)^{Y-\temp}
$$
the vertical arrow appearing in \eqref{diag:triangle with mixed pullback}. Any map $f: X \to Y$ of quasi-smooth stacks over $Z$ yields a cospan
$$
 \ICoh_0(Z^\wedge_Y)
\xto{ (\wt f)^{!, Y-\temp}  }
  \ICoh_0(Z^\wedge_X)^{Y-\temp}
\xleftarrow{\;\;\Psi_{Z,f} \;\;}
  \ICoh_0(Z^\wedge_X)
$$
in $\H(Z) \mmod$, where we have set $\Psi_{Z,f} := \id_{\Hsx ZY} \otimes \Psi_{Y^\wedge_X}$.

\sssec{} \label{sssec:intermediate cat more familiar}
If $Y$ happens to be a scheme, then $\ICoh_0(Z^\wedge_X)^{Y-\temp}$ can be rewritten more simply as
$$
  \ICoh_0(Z^\wedge_X)^{Y-\temp}
\simeq
 \ICoh_0(Z^\wedge_Y)
 \usotimes{\Dmod(Y)}
 \Dmod(X).
$$
This follows from the natural $\H(Y)$-linear equivalence
$$
\QCoh(Y^\wedge_X) 
\simeq 
\QCoh(Y) 
\usotimes{\Dmod(Y)}
\Dmod(X),
$$
which is valid when $Y$ is a scheme, together with the $(\H(Y), \Dmod(Y))$-bimodule structure of $\QCoh(Y)$.
Thus, in this case, the pullback $(\wt f)^{!,Y-\temp}$ is obtained by the $\fD$-module pullback $\Dmod(Y) \to \Dmod(X)$ upon tensoring up.

\ssec{Constructing the glued DG category}

We retain the notation from the previous section.

\sssec{} \label{sssec:functoriality quesiton}

Regarding the quasi-smooth stack $Z$ as fixed throughout, let us consider the full subcategory $\QSmooth_{/Z} \subseteq \Stk_{/Z}$ spanned by quasi-smooth stacks mapping schematically to $Z$.
We need to study the functoriality of the assignment
\begin{equation} \label{eqn:assignemnt of cospan to arrow}
[X \xto f Y]
\squigto
\Big[ \ICoh_0(Z^\wedge_Y)
\xto{ (\wt f)^{!, Y-\temp}  }
  \ICoh_0(Z^\wedge_X)^{Y-\temp}
\xleftarrow{\;\;\Psi_{Z, f} \;\;}
  \ICoh_0(Z^\wedge_X)
  \Big]
\end{equation}
from arrows in $\QSmooth_{/Z}$ to cospans in $\H(Z) \mmod$.

\sssec{} \label{sssec:twisted arrows}

To this end, recall the $\infty$-category $\Tw(\I)$ of \emph{twisted arrows} associated to an $\infty$-category $\I$, see \cite[Section 4.2]{DAGX}. 
Our convention is that 
$$
\Tw(\I) = \{[i \to j]\}
$$
is covariant in the first argument and contravariant in the second argument.
The answer to the functoriality question posed in Section \ref{sssec:functoriality quesiton} is that there is a functor (to be constructed below)
$$
\ICoh_0^\Tw: 
(\Tw(\QSmooth_{/Z} ))^\op 
\longto
\H(Z) \mmod
$$
that sends the correspondence
$$
[Y \to Y] \longleftarrow [X \to Y] \longto [X \to X]
$$
in $\Tw(\QSmooth_{/Z})$ to the cospan appearing in \eqref{eqn:assignemnt of cospan to arrow}.

\sssec{}

To construct $\ICoh_0^\Tw$, we need an auxiliary notion: the simplicial space of \emph{descending grids} associated to an $\infty$-category $\E$. 
For $n \geq 0$, let 
$$
\Grid^{\downarrow}_n(\E) := \Maps
\Bigt{
([n]^\op \times [n])^{\leq}
, 
\E
},
$$
where $([n]^\op \times [n])^{\leq}$ is the poset
$$
([n]^\op \times [n])^{\leq} := \{(i,j) \in [n]^\op \times [n] \, \big| \, i \leq j   \}.
$$
Pictorially, $\Grid^{\downarrow}_n(\E)$ is the space of commutative diagrams (henceforth called descending $n$-grids)
\begin{equation}  \label{diag:big grid}
\begin{minipage}{0.45\linewidth}
\begin{tikzpicture}[scale=1.5]
\node (54) at (5.5,2.8) {$x_{0,0}$};
\node (53) at (5.5,2.1) {$x_{0,1}$};
\node (52) at (5.5,1.4) {$\vdots$};
\node (51) at (5.5,.7) {$x_{0,n-1}$};
\node (50) at (5.5,0) {$x_{0,n}$};
\node (43) at (4.4,2.1) {$x_{1,1}$};
\node (42) at (4.4,1.4) {$\vdots$};
\node (41) at (4.4,.7) {$x_{1,n-1}$};
\node (40) at (4.4,0) {$x_{1,n}$};
\node (32) at (3.3,1.4) {$\ddots$};
\node (31) at (3.3,.7) {$\cdots$};
\node (30) at (3.3,0) {$\cdots$};
\node (21) at (2.2,.7) {$x_{n-1,n-1}$};
\node (20) at (2.2,0) {$x_{n-1,n}$};
\node (10) at (1,0) {$x_{n,n}$};
\path[<-,font=\scriptsize,>=angle 90]
(20.north) edge node[above] {$ $} (21.south); 
\path[<-,font=\scriptsize,>=angle 90]
(30.north) edge node[above] {$ $} (31.south); 
\path[<-,font=\scriptsize,>=angle 90]
(40.north) edge node[above] {$ $} (41.south); 
\path[<-,font=\scriptsize,>=angle 90]
(50.north) edge node[above] {$ $} (51.south); 
\path[<-,font=\scriptsize,>=angle 90]
(31.north) edge node[above] {$ $} (32.south); 
\path[<-,font=\scriptsize,>=angle 90]
(41.north) edge node[above] {$ $} (42.south); 
\path[<-,font=\scriptsize,>=angle 90]
(51.north) edge node[above] {$ $} (52.south); 
\path[<-,font=\scriptsize,>=angle 90]
(42.north) edge node[above] {$ $} (43.south); 
\path[<-,font=\scriptsize,>=angle 90]
(52.north) edge node[above] {$ $} (53.south); 
\path[<-,font=\scriptsize,>=angle 90]
(53.north) edge node[above] {$ $} (54.south); 
\path[->,font=\scriptsize,>=angle 90]
(10.east) edge node[above] {$ $}  (20.west);
\path[->,font=\scriptsize,>=angle 90]
(20.east) edge node[above] {$ $}  (30.west);
\path[->,font=\scriptsize,>=angle 90]
(30.east) edge node[above] {$ $}  (40.west);
\path[->,font=\scriptsize,>=angle 90]
(40.east) edge node[above] {$ $}  (50.west);
\path[->,font=\scriptsize,>=angle 90]
(21.east) edge node[above] {$ $}  (31.west);
\path[->,font=\scriptsize,>=angle 90]
(31.east) edge node[above] {$ $}  (41.west);
\path[->,font=\scriptsize,>=angle 90]
(41.east) edge node[above] {$ $}  (51.west);
\path[->,font=\scriptsize,>=angle 90]
(32.east) edge node[above] {$ $}  (42.west);
\path[->,font=\scriptsize,>=angle 90]
(42.east) edge node[above] {$ $}  (52.west);
\path[->,font=\scriptsize,>=angle 90]
(43.east) edge node[above] {$ $}  (53.west);
\end{tikzpicture}
\end{minipage}
\end{equation} 
of objects of $\E$. The assignment $[n] \squigto \Grid^{\downarrow}_n(\E)$ is naturally a simplicial space, which we will denote by $\Grid^{\downarrow}_\bullet(\E)$.

\sssec{}

Similarly, we have the notion of \emph{ascending grid}, obtained from the above by reversing (only) the vertical arrows. Precisely, we consider
$$
\Grid^{\uparrow}_n(\E) := \Maps
\Bigt{
([n]^\op \times [n]^\op)^{\leq}
, 
\E
},
$$
where $([n]^\op \times [n]^\op)^{\leq}$ is the poset
$ \{(i,j) \in [n]^\op \times [n]^\op \, \big| \, i \leq j   \}$.

\sssec{}

By a slight modification of \cite[Appendix B]{chiralcat}, we find that the space of $\infty$-functors $\Tw(\I)^\op \to \E$ is equivalent to the space of maps  $N_\bullet(\I) \to \Grid^\downarrow_\bullet(\E)$ of simplicial spaces, where $N_\bullet(\I)$ is the nerve of $\I$.
We use this to define our functor $\ICoh_0^\Tw: \Tw(\QSmooth_{/Z})^\op \to \H(Z) \mmod$: we will provide a simplicial assignment\footnote{Formally, one should use the Grothendieck construction associated to $\Grid_\bullet(\H(Z)\mmod)$. We leave these details to the reader.}
 of a descending $n$-grid $\grid^{\downarrow}_n(X_i)$ in $\H(Z)\mmod$ to any $[n] \in \bDelta$ and any string $X_0 \to X_1 \to \cdots \to X_n$ of quasi-smooth stacks mapping schematically to $Z$. 

\sssec{}

To construct $\grid_n^{\downarrow}(X_i)$, we proceed in two steps. We first construct an ascending version $\grid_n^{\uparrow}(X_i)$, which is easier to handle. Secondly, we prove that each of the squares forming $\grid_n^{\uparrow}(X_i)$ is vertically right adjointable: this means that the vertical arrows all admit continuous (and automatically $\H(Z)$-linear, since $\H(Z)$ is rigid) right adjoints and that the resulting lax-commutative squares are commutative.
This yields a well-defined descending grid $\grid_n^{\downarrow}(X_i)$, and it will be clear that the assignment
$$([n], X_0 \to \cdots X_n) \squigto \grid_n^{\downarrow}(X_i)$$ determines a map of simplicial spaces as desired.

\sssec{} \label{sssec:assignment of grid}
 
Let us now define $\grid^\uparrow_n(X_i)$.
\begin{itemize}
\item
The DG category in the entry $(i,j)$ is
$$
\C_{i,j} := \ICoh_0 \bigt{ Z^\wedge_{X_i} }^{X_j -\temp}.
$$

\smallskip

\item
For $i \leq j \leq k$, the horizontal arrow corresponding to $(j, k) \leftto (i,k)$ is the pullback
$$
\ICoh_0(Z^\wedge_{X_j})^{X_k -\temp}
\to
\ICoh_0(Z^\wedge_{X_i})^{X_k -\temp}
$$
induced by the $(!,0)$-pullback along $Z^\wedge_{X_i} \to Z^\wedge_{X_j}$. Thanks to Lemma \ref{lem:easy charact of ICoh zero temp.}, such pullback does indeed preserve the $X_k$-tempered subcategories.

\smallskip

\item
For $i \leq j \leq k$, the vertical arrow corresponding to $(i, k) \leftto (i,j)$ is the obvious inclusion
$$
\ICoh_0(Z^\wedge_{X_i})^{X_k -\temp}
\hto
\ICoh_0(Z^\wedge_{X_i})^{X_j -\temp}.
$$
\end{itemize}
In view of the functoriality of $\ICoh_0$ under pullbacks, this datum underlies an ascending $n$-grid. In fact, for each $i \leq j$, the $(!,0)$-pullback along $Z^\wedge_{X_i} \to Z^\wedge_{X_j}$ gives the diagonal arrow $(j,j) \to (i,i)$ pointing north-east, and it is clear that these diagonal arrows, together with the vertical ascending inclusions, determine the rest of the grid uniquely.

\sssec{} 

Consider the arrow $N_\bullet(\QSmooth_{/Z}) \to \Grid^{\uparrow}_\bullet(\H(Z)\mmod)$ just constructed, schematically denoted by $([n], X_i) \squigto \grid_n^{\uparrow}(X_i)$. A string $i \leq j \leq k \leq \ell$ determines the following commutative square 
\begin{equation} 
\nonumber
\begin{tikzpicture}[scale=1.5]
\node (00) at (0,0) {$\ICoh_0(Z^\wedge_{X_j})^{X_\ell -\temp}$};
\node (10) at (3.5,0) {$\ICoh_0(Z^\wedge_{X_i})^{X_\ell -\temp}$,};
\node (01) at (0,1) {$\ICoh_0(Z^\wedge_{X_j})^{X_k -\temp}$};
\node (11) at (3.5,1) {$\ICoh_0(Z^\wedge_{X_i})^{X_k -\temp}$};
\path[->,font=\scriptsize,>=angle 90]
(00.east) edge node[above] {$\mbox{pullback}$} (10.west); 
\path[ ->,font=\scriptsize,>=angle 90]
(01.east) edge node[above] {$\mbox{pullback}$} (11.west); 
\path[<- right hook ,font=\scriptsize,>=angle 90]
(01.south) edge node[right] {$\Xi$} (00.north);
\path[<- right hook,font=\scriptsize,>=angle 90]
(11.south) edge node[right] {$\Xi$} (10.north);
\end{tikzpicture}
\end{equation} 
which appears as a general square of $\grid_n^{\uparrow}(X_i)$.
In the following lemma, we prove the promised property of such a square.

\begin{lem} \label{lem: right adjojntables}
The above square is vertically right adjointable, meaning that the vertical arrows admit continuous right adjoints and that the lax commutative square obtained by changing the vertical arrows with these right adjoints is commutative.
\end{lem}

\begin{proof}
The assertion about the continuity of the right adjoints, to be denoted simply by $\Psi$, is clear. Indeed, 
$$
\Xi: \ICoh_0(Z^\wedge_{X_j})^{X_\ell -\temp} 
\hto \ICoh_0(Z^\wedge_{X_j})^{X_k -\temp}
$$
is obtained from 
$$
\Xi: \QCoh((X_\ell)^\wedge_{X_j})
\hto \ICoh_0((X_\ell)^\wedge_{X_j})^{X_k -\temp}
$$
by tensoring up with $\Hsx Z{X_\ell}$ over $\H(X_\ell)$, and the latter functor evidently admits a continuous right adjoint, given by the composition 
\begin{equation} 
\nonumber
\begin{tikzpicture}[scale=1.5]
\node (00) at (0,0) {$
 \ICoh_0((X_\ell)^\wedge_{X_j})^{X_k -\temp}$};
\node (10) at (3,0) {$\ICoh_0((X_\ell)^\wedge_{X_j})
$};

\node (20) at (6,0) {$ \QCoh((X_\ell)^\wedge_{X_j})
$.};
\path[right hook ->,font=\scriptsize,>=angle 90]
(00.east) edge node[above] {$\Xi$} (10.west); 
\path[->>,font=\scriptsize,>=angle 90]
(10.east) edge node[above] {${\Psi_{(X_\ell)^\wedge_{X_j}}}$} (20.west); 
\end{tikzpicture}
\end{equation} 
It remains to prove that the lax commutative square
\begin{equation} 
\nonumber
\begin{tikzpicture}[scale=1.5]
\node (00) at (0,0) {$\ICoh_0(Z^\wedge_{X_j})^{X_\ell -\temp}$};
\node (10) at (3.5,0) {$\ICoh_0(Z^\wedge_{X_i})^{X_\ell -\temp}$};
\node (01) at (0,1) {$\ICoh_0(Z^\wedge_{X_j})^{X_k -\temp}$};
\node (11) at (3.5,1) {$\ICoh_0(Z^\wedge_{X_i})^{X_k -\temp}$};
\path[->,font=\scriptsize,>=angle 90]
(00.east) edge node[above] {$\mbox{pullback}$} (10.west); 
\path[ ->,font=\scriptsize,>=angle 90]
(01.east) edge node[above] {$\mbox{pullback}$} (11.west); 
\path[->> ,font=\scriptsize,>=angle 90]
(01.south) edge node[right] {$\Psi$} (00.north);
\path[->>,font=\scriptsize,>=angle 90]
(11.south) edge node[right] {$\Psi$} (10.north);
\end{tikzpicture}
\end{equation} 
is commutative. As a first simplication, we can assume that the map $X_\ell = Z$ is an isomorphism: indeed the above square is obtained from
\begin{equation} 
\nonumber
\begin{tikzpicture}[scale=1.5]
\node (00) at (0,0) {$\QCoh((X_\ell)^\wedge_{X_j})$};
\node (10) at (3.8,0) {$\QCoh((X_\ell)^\wedge_{X_i})$};
\node (01) at (0,1) {$\ICoh_0((X_\ell)^\wedge_{X_j})^{X_k -\temp}$};
\node (11) at (3.8,1) {$\ICoh_0((X_\ell)^\wedge_{X_i})^{X_k -\temp}$};
\path[->,font=\scriptsize,>=angle 90]
(00.east) edge node[above] {$\mbox{pullback}$} (10.west); 
\path[ ->,font=\scriptsize,>=angle 90]
(01.east) edge node[above] {$\mbox{pullback}$} (11.west); 
\path[->> ,font=\scriptsize,>=angle 90]
(01.south) edge node[right] {$\Psi$} (00.north);
\path[->>,font=\scriptsize,>=angle 90]
(11.south) edge node[right] {$\Psi$} (10.north);
\end{tikzpicture}
\end{equation} 
by tensoring up with $\Hsx Z{X_\ell}$ over $\H(X_\ell)$. By pulling back to an atlas of $X_\ell$, we can assume that all the stacks in question are schemes (recall that all the maps to $Z = X_\ell$ are schematic). Then, thanks to the observation of Section \ref{sssec:intermediate cat more familiar}, the latter diagram can be rewritten as
\begin{equation} 
\nonumber
\begin{tikzpicture}[scale=1.5]
\node (00) at (0,0) 
{$
\QCoh((X_\ell)^\wedge_{X_k}) 
\usotimes{\Dmod(X_k)} 
\Dmod(X_j)
$};
\node (10) at (5.5,0) 
{$
\QCoh((X_\ell)^\wedge_{X_k}) 
\usotimes{\Dmod(X_k)} 
\Dmod(X_i).
$};
\node (01) at (0,1) 
{$
\ICoh_0((X_\ell)^\wedge_{X_k}) 
\usotimes{\Dmod(X_k)} 
\Dmod(X_j)
$};
\node (11) at (5.5,1)
{$
\ICoh_0((X_\ell)^\wedge_{X_k}) 
\usotimes{\Dmod(X_k)} 
\Dmod(X_i)
$};
\path[->,font=\scriptsize,>=angle 90]
(00.east) edge node[above] {$\id \otimes (X_i \to X_j)^{!,\dR}$} (10.west); 
\path[ ->,font=\scriptsize,>=angle 90]
(01.east) edge node[above] {$\id \otimes (X_i \to X_j)^{!,\dR}$} (11.west); 
\path[->> ,font=\scriptsize,>=angle 90]
(01.south) edge node[right] {$\Psi \otimes \id$} (00.north);
\path[->>,font=\scriptsize,>=angle 90]
(11.south) edge node[right] {$\Psi \otimes \id$} (10.north);
\end{tikzpicture}
\end{equation} 
This diagram is manifestly commutative, as the vertical and horizontal arrows are \virg{decoupled}.
\end{proof}

\ssec{The gluing functor and the statement of the main theorem}

In this section, we use the above construction to define our gluing functor and state our main theorem.

\sssec{}

In the previous section, we have constructed a functor $\ICoh_0^{\Tw}: \Tw(\QSmooth_{/Z})^\op \to \H(Z)\mmod$.
Let us now fix $Z := \LSG$ and consider the obvious functor
$$
\phi:
\Par \longto \QSmooth_{/\LSG}, 
\hspace{.4cm}
P \squigto \LSP,
$$
where $\Par$ denotes the poset of standard parabolic subgroups of $G$ (ordered by inclusion). Composition yields a functor
$$
\ICoh_{0}^{\Tw,\Par}: \Tw(\Par)^\op 
\longto 
\H(\LSG) \mmod
$$
$$
\hspace{3.9cm}
[\PQ]
\squigto
\ICoh_0((\LSG)^\wedge_{\LS_Q})^{P-\temp}
$$
out of the $1$-category of twisted arrows of the poset $\Par$. 
Here we are using the notation \virg{$P-\temp$} instead of the more cumbersome \virg{$\LSP- \temp$}.

\begin{rem} \label{rem:twisted arrows Par op}

The $1$-category $\Tw := \Tw(\Par)$ is easy to grasp: it is a poset consisting of $3^{\rank(G)}$ elements. For $G$ of semisimple rank one, $\Tw$ is simply a correspondence diagram
$$
[B \subseteq B] 
\longleftarrow
[B \subseteq G] 
\longto
[G \subseteq G]. 
$$
For $G$ of semisimple rank two, the Hasse diagram of $\Tw$ is 
$$ 
\begin{tikzpicture}[scale=1.5]
\node (00) at (0,0) {$[B \subseteq B] $};
\node (10) at (2,0) {$[B \subseteq Q] $};
\node (20) at (4,0) {$[Q \subseteq Q] $,};
\node (01) at (0,1) {$[B \subseteq P] $};
\node (11) at (2,1) {$[B \subseteq G]$};
\node (21) at (4,1) {$[Q \subseteq G] $};
\node (02) at (0,2) {$[P \subseteq P] $};
\node (12) at (2,2) {$[P \subseteq G] $};
\node (22) at (4,2) {$[G \subseteq G] $};
\path[->,font=\scriptsize,>=angle 90]
(12.east) edge node[below] {} (22.west);
\path[->,font=\scriptsize,>=angle 90]
(11.east) edge node[below] {$ $} (21.west);
\path[->,font=\scriptsize,>=angle 90]
(10.east) edge node[below] {$ $} (20.west);
\path[<-,font=\scriptsize,>=angle 90]
(02.east) edge node[below] { } (12.west);
\path[<-,font=\scriptsize,>=angle 90]
(01.east) edge node[below] {$ $} (11.west);
\path[<-,font=\scriptsize,>=angle 90]
(00.east) edge node[below] {$ $} (10.west);
\path[<-,font=\scriptsize,>=angle 90]
(02.south) edge node[right] {} (01.north);
\path[<-,font=\scriptsize,>=angle 90]
(12.south) edge node[below] {$ $} (11.north);
\path[<-,font=\scriptsize,>=angle 90]
(22.south) edge node[below] {$ $} (21.north);
\path[->,font=\scriptsize,>=angle 90]
(01.south) edge node[right] {} (00.north);
\path[->,font=\scriptsize,>=angle 90]
(11.south) edge node[below] {$ $} (10.north);
\path[->,font=\scriptsize,>=angle 90]
(21.south) edge node[below] {$ $} (20.north);
\end{tikzpicture}
$$
where $P$ and $Q$ denote the two maximal parabolic subgroups.
Like any category of twisted arrows, $\Tw$ has morphisms of two kinds: the ones that keep the first argument fixed (to be called morphisms of the \emph{first type}) and the ones that keep the second argument fixed (to be called morphisms of the \emph{second type}).
\end{rem}

\sssec{}

The limit DG category
$$
\lim(\ICoh_0^{\Tw,\Par})
=
\lim_{[\PQ] \in \Tw(\Par)^\op} \; 
\ICoh_0((\LSG)^\wedge_{\LS_Q})^{P-\temp}
$$ 
is the \emph{glued DG category} that will feature in the strong gluing theorem.
To complete the statement of that theorem, it remains to exhibit an $\H(\LSG)$-linear functor from $\ICoh_\N(\LSG)$ to $\lim(\ICoh_0^{\Tw,\Par})$.

\sssec{}

As a short digression, let us discuss the relation between $\lim(\ICoh_0^{\Tw,\Par})$ and the lax-limit of \cite[Sections 4.2-4.3]{AG2}. Recall that the latter is the lax-limit of the functor
$$
\ICoh_0((\LSG)^\wedge_{\LS_?}: 
\Par^\op \longto \H(\LSG)\mmod
$$
induced by the $(!,0)$-pullbacks.
We wish to construct a fully faithful embedding
\begin{equation} \label{eqn: nice inclusion of laxlim into lim}
\lim \bigt{ \ICoh_0^{\Tw,\Par} }
\hto
\laxlim \Bigt{ \ICoh_0((\LSG)^\wedge_{\LS_?} ) }.
\end{equation}
To this end, consider the cocartesian fibration 
$$
\pi: \Groth(\ICoh_0^{\Tw,\Par}) \longto \Tw^\op
$$
associated to $\ICoh_0^{\Tw,\Par}$ by the Grothendieck construction. Then our glued category is the DG category of cocartesian sections of $\pi$, while it is easy to see that
$$
\laxlim \Bigt{ \ICoh_0((\LSG)^\wedge_{\LS_?} ) }
$$
is equivalent to the DG category of sections of $\pi$ that are required to be cocartesian only over the morphisms of $\Tw$ of the first type.\footnote{Recall these are the arrows $[\PQ] \to [Q \subseteq P']$ that fix the first argument.}

\sssec{}

Having established the inclusion \eqref{eqn: nice inclusion of laxlim into lim}, we can proceed to define an $\H(\LSG)$-linear functor 
$$
\ICoh_\N(\LSG) \longto \lim \bigt{ \ICoh_0^{\Tw,\Par} }.
$$
By \cite[Section 4.2]{AG2}, the functors
$$
\ICoh(\LSG) 
\xto{pullback}
\ICoh((\LSG)^\wedge_{\LSP})
\tto
\ICoh_0((\LSG)^\wedge_{\LSP})
$$
assemble to an $\H(\LSG)$-linear functor
\begin{equation} \label{eqn: functor from ICoh to lax lim}
\ICoh(\LSG) \longto
\laxlim \Bigt{ \ICoh_0((\LSG)^\wedge_{\LS_?} ) }.
\end{equation}

\begin{lem}
The above functor factors through the inclusion \eqref{eqn: nice inclusion of laxlim into lim}, thereby yielding an $\H(\LSG)$-linear functor
\begin{equation} \label{eqn: functor from ICoh to lim Tw}
\ICoh(\LSG) \longto
\lim \bigt{ \ICoh_0^{\Tw,\Par} }.
\end{equation}
\end{lem}

\begin{proof}
Unraveling the definitions, it suffices to verify that, for $\PQ$, the obviously commutative square
\begin{equation} 
\nonumber
\begin{tikzpicture}[scale=1.5]
\node (00) at (0,0) {$\ICoh_0((\LSG)^\wedge_{\LSP})$};
\node (10) at (4,0) {$\ICoh_0((\LSG)^\wedge_{\LSQ})^{P-\temp}$};
\node (01) at (0,1) {$\ICoh((\LSG)^\wedge_{\LSP})$};
\node (11) at (4,1) {$\ICoh((\LSG)^\wedge_{\LSQ})$};
\path[->,font=\scriptsize,>=angle 90]
(00.east) edge node[above] {$\mbox{pullback}$} (10.west); 
\path[ ->,font=\scriptsize,>=angle 90]
(01.east) edge node[above] {$\mbox{pullback}$} (11.west); 
\path[<- right hook ,font=\scriptsize,>=angle 90]
(01.south) edge node[right] {$ $} (00.north);
\path[<- right hook,font=\scriptsize,>=angle 90]
(11.south) edge node[right] {$ $} (10.north);
\end{tikzpicture}
\end{equation} 
is vertically right adjointable, see Lemma \ref{lem: right adjojntables}. The proof goes as in that lemma: pass to an atlas of $\LSG$ and use the observation of Section \ref{sssec:intermediate cat more familiar} to reduce to a decoupled diagram.  
\end{proof}

\sssec{}

By pre-composing with the inclusion $\ICoh_\N(\LSG) \hto \ICoh(\LSG)$, we finally get our gluing functor
$$
\gamma^{\strong}: 
\ICoh_\N(\LSG) \longto
\lim \bigt{ \ICoh_0^{\Tw,\Par} }.
$$
By construction, $\gamma^{\strong}$ is assembled out of the $\H(\LSG)$-linear functors
$$
\gamma_{[\PQ]}:
\ICoh_\N(\LSG)
\longto
\ICoh((\LSG)^\wedge_{\LSQ})
\tto
\ICoh_0((\LSG)^\wedge_{\LSQ})
\tto
\ICoh_0((\LSG)^\wedge_{\LSQ})^{P-\temp},
$$
for any arrow $[\PQ]$ in $\Tw$.
Our main theorem reads:

\begin{thm}[Strong spectral gluing] \label{mainthm:strong-ICOHN}
The $\H(\LSG)$-linear functor
\begin{equation} \label{equiv:main-ICOHN as limit of twisted arrow}
\gamma^\strong:
\ICoh_\N(\LSG) 
\longto
\lim \bigt{ \ICoh_0^{\Tw, \Par} }
=
\lim_{[\PQ] \in \Tw^\op}
\ICoh_0((\LSG)^\wedge_{\LSQ})^{P-\temp}
\end{equation} 
is an equivalence.
\end{thm}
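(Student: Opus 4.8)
The plan is to use the microlocal machinery of Section~\ref{sec:global complete intersect} to reduce the assertion to a purely $\Dmod$-theoretic gluing statement for (a relative version of) the nilpotent cone, namely the equivalence~\eqref{eqn:theorem for nilcone}; that statement is the heart of the matter.

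\emph{Step 1: reduction to a global complete intersection atlas.} Both $\ICoh_\N(\LSG)$ and $\lim(\ICoh_0^{\Tw, \Par})$ are the global sections of sheaves of DG categories on $\LSG$ with local $\HC$-actions in the sense of \cite{shvcatHH}: for the former this is \cite{AG1}; for the latter it holds since each $\ICoh_0((\LSG)^\wedge_{\LSQ})^{P-\temp}$ is such a sheaf of categories, the transition functors of $\ICoh_0^{\Tw, \Par}$ are $\H$-linear, and a limit of sheaves of categories is computed objectwise on the site. Moreover $\gamma^\strong$ underlies a morphism of such sheaves of categories, and such a morphism is an equivalence iff it is so after $!$-restriction along a smooth cover. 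Since the maps $\LSP \to \LSG$ are schematic and $\LSG$ admits a smooth cover by global complete intersection schemes, it suffices to treat the case where the base is a global complete intersection scheme $Y$ and the diagram $P \squigto \LSP$ over $\LSG$ is replaced by the diagram $P \squigto Y_P := \LSP \ustimes{\LSG} Y$ of quasi-smooth schemes, schematic over $Y$ via maps $f_P \colon Y_P \to Y$. By construction $\gamma^\strong$ restricts to the analogous functor
$$
\gamma^\strong_Y \colon \ICoh_\N(Y) \longto \lim_{[\PQ] \in \Tw^\op} \ICoh_0(Y^\wedge_{Y_Q})^{Y_P - \temp},
$$
and it remains to show that $\gamma^\strong_Y$ is an equivalence.

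\emph{Step 2: passage to $\Dmod$ of spaces of singularities.} By Corollary~\ref{cor:singsupp in terms of Dmod unshift for global comple intersection}, $\ICoh_\N(Y) \simeq \ICoh(Y) \usotimes{\DSingY} \Dmod(N_Y)^\unshift$, where $N_Y \subseteq \Sing(Y)$ is the preimage of $\N$. Combining Remark~\ref{rem:intermediate cat more familiar} (applicable since each $Y_P$ is a scheme) with Proposition~\ref{prop:ICohzero via DSing} and the other constructions of Section~\ref{sec:global complete intersect}, one obtains, naturally in the maps involved,
$$
\ICoh_0(Y^\wedge_{Y_Q})^{Y_P - \temp} \simeq \ICoh(Y) \usotimes{\DSingY} \Dmod(\mathcal{S}_{P,Q})^\unshift, \qquad \mathcal{S}_{P,Q} := \fs_{f_P}^{-1}(O_{Y_P}) \ustimes{Y_P} Y_Q,
$$
with the transition functors of $\ICoh_0^{\Tw, \Par}$ given by $\ICoh(Y) \usotimes{\DSingY} (-)$ applied to the $!$-pullback functors among the $\Dmod(\mathcal{S}_{P,Q})^\unshift$ induced by the evident maps between the schemes $\mathcal{S}_{P,Q}$. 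Since $\ICoh(Y)$ is dualizable over the smooth and proper symmetric monoidal category $\DSingY$ (this is implicit in the proof of Theorem~\ref{thm:what-acts-on-ICOH}), the functor $\ICoh(Y) \usotimes{\DSingY} (-)$ admits a left adjoint, hence preserves limits; together with the fact that the shift automorphism commutes with limits, this gives
$$
\lim_{[\PQ] \in \Tw^\op} \ICoh_0(Y^\wedge_{Y_Q})^{Y_P - \temp} \simeq \ICoh(Y) \usotimes{\DSingY} \Big( \lim_{[\PQ] \in \Tw^\op} \Dmod(\mathcal{S}_{P,Q}) \Big)^\unshift.
$$
Tracing through the definition of $\gamma^\strong_Y$, this functor is $\ICoh(Y) \usotimes{\DSingY} (-)$ applied to the natural comparison morphism $\Dmod(N_Y)^\unshift \to \big( \lim_{[\PQ]} \Dmod(\mathcal{S}_{P,Q}) \big)^\unshift$. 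As the shift of grading is an automorphism of $\Gm \rrep^{weak}$, under which a morphism is an equivalence iff its underlying DG functor is, it suffices to show that the $!$-pullback functor
$$
\Dmod(N_Y) \longto \lim_{[\PQ] \in \Tw^\op} \Dmod(\mathcal{S}_{P,Q})
$$
is an equivalence.

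\emph{Step 3: the nilpotent cone.} By the computation of the singular codifferentials of the maps $\LSP \to \LSG$ in terms of unipotent radicals (\cite{AG1}) and the functoriality of the $\Sing$-construction, the datum $(N_Y, \{\mathcal{S}_{P,Q}\}_{[\PQ]})$ together with its $!$-pullback functors is obtained by base change from the universal $G$-equivariant correspondence diagram $G \times^P \u_P \twoheadleftarrow G \times^Q \u_P \hookrightarrow G \times^Q \u_Q$ over $\N_\g$; thus the last displayed functor reduces to the equivalence~\eqref{eqn:theorem for nilcone}, in its evident relative form, which is proven by the same argument. This is where all of the geometric content is concentrated, and hence the \textbf{main obstacle}. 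I would prove~\eqref{eqn:theorem for nilcone} by descent: the generalized Grothendieck--Springer map $G \times^B \fn \to \N_\g$ is proper and surjective, so $\Dmod(\N_\g)$ is the totalization of the associated descent cosimplicial object, whose terms are governed by the Steinberg-type varieties $G \times^B \fn \ustimes{\N_\g} \cdots \ustimes{\N_\g} G \times^B \fn$; the stratifications of these by $G$-orbits are built precisely out of the spaces $G \times^Q \u_P$, and a bookkeeping argument matching the poset $\Tw(\Par^\op)$ with these orbit stratifications identifies the totalization with the twisted-arrows limit. An alternative is induction on the semisimple rank of $G$: the base case of rank $\le 1$ is the microlocal form of Theorem~\ref{thm:intro-GL2 case}, and the inductive step exploits the Hasse structure of $\Tw(\Par^\op)$ together with the recollement of $\N_\g$ along its zero section. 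Either way, the delicate part is organizing the combinatorics so that the resulting cospans assemble with the cocartesian compatibilities demanded by the twisted-arrows limit.
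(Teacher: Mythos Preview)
Your Steps 1 and 2 follow the paper's strategy closely (and in fact your justification in Step 2 that $\ICoh(Y)\usotimes{\DSingY}(-)$ preserves the finite limit is more explicit than what the paper writes). The divergence, and the genuine gap, is in Step 3.

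The claim that the datum $(N_Y,\{\mathcal{S}_{P,Q}\})$ is obtained by base change from the diagram $G\times^P\u_P\twoheadleftarrow G\times^Q\u_P\hookrightarrow G\times^Q\u_Q$ over $\N_\fg$ is not correct. The statement~\eqref{eqn:theorem for nilcone} is presented in the introduction only as a \emph{microlocal analogue}, not as a reduction target. The space $N_Y$ sits inside $\Sing(Y)$, which is the base change of $\Arth_G=\{(\sigma,A):A\in H^0(X_\dR,\fg_\sigma)\}$; the fiber of $\mathcal{S}_{P,Q}\to N_Y$ over a point $(\sigma,A)$ is the scheme of $Q$-reductions $\sigma_Q$ of the local system $\sigma$ for which $A$ lands in $H^0(X_\dR,(\u_P)_{\sigma_Q})$. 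This is a \emph{global} Springer fiber: it depends on the curve $X$ and on $\sigma$, and is not the fiber of $G\times^Q\u_P\to\N_\fg$ over any point. For example, when $\sigma$ is irreducible there are no proper reductions at all, a phenomenon invisible from~\eqref{eqn:theorem for nilcone}. So neither of your proposed proofs of~\eqref{eqn:theorem for nilcone} (descent via Steinberg varieties, or rank induction) would close the argument even if carried out.

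What the paper actually does after your Step 2 is the following. It packages the target limit as $\Dmod$ of a prestack $\N_{\Glued}=\colim_{[\PQ]}\N_{\PQ}$ mapping to $\N$ by a pseudo-proper map, and then invokes a pointwise criterion (Proposition~\ref{prop:pseudo-proper-schemes}): the pullback is an equivalence iff it is so over every geometric point $(\sigma,A)\in\N$. The fiber there is the \emph{glued global Springer fiber} $\Spr_{\Glued,\unip}^{\,\sigma,A}$ already introduced in \cite{AG2}. The paper then proves $\Vect\simeq\Dmod(\Spr_{\Glued,\unip}^{\,\sigma,A})$ by refining the argument of \cite{AG2}: an induction on Levi subgroups reduces to a simpler prestack $\Spr_{\Glued}^{\,\sigma,A}$, and its contractibility is established by a stratification indexed by $W'\subset W$ (Jacobson--Morozov), with the inductive step using the full strength of $\fD$-module excision (Proposition~\ref{prop:excision}) rather than only the ``half'' used in \cite{AG2}. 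This Weyl-group combinatorics is precisely the ``delicate part'' you allude to, but it takes place on global Springer fibers, not on $\N_\fg$.
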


\begin{example}
The DG category corresponding to $[G \supseteq Q]$ is 
$$
\ICoh_0((\LSG)^\wedge_{\LSQ})^{G-\temp}
\simeq
\QCoh((\LSG)^\wedge_{\LSQ}).
$$
Thus, for $G$ of semisimple rank $1$, the theorem states that the commutative diagram
\begin{equation} 
\nonumber
\begin{tikzpicture}[scale=1.5]
\node (00) at (0,0) {$\QCoh(\LSG)$};
\node (10) at (3,0) {$\QCoh((\LSG)^\wedge_{\LSB})$};
\node (01) at (0,1.2) {$\ICoh_\N(\LSG)$};
\node (11) at (3,1.2) {$\ICoh_0((\LSG)^\wedge_{\LSB})$};
\path[->,font=\scriptsize,>=angle 90]
(00.east) edge node[above] {$\mbox{pullback}$} (10.west); 
\path[->,font=\scriptsize,>=angle 90]
(01.east) edge node[above] {$ \gamma_{[B \subseteq B]}$} (11.west); 
\path[->,font=\scriptsize,>=angle 90]
(01.south) edge node[right] {$\Psi = \gamma_{[G \subseteq G]}$} (00.north);
\path[->,font=\scriptsize,>=angle 90]
(11.south) edge node[right] {$\Psi$} (10.north);
\end{tikzpicture}
\end{equation} 
is a fiber square. This is the diagram we have been considering in Theorem \ref{thm:intro-GL2 case}.
\end{example}

\sec{A $\fD$-module reformulation} \label{sec:microlocal reformulation}

To prove Theorem \ref{mainthm:strong-ICOHN}, we follow the strategy of \cite{AG2}. 
In the first step, we use the $\DSingY$-action on $\ICoh(Y)$, present for a complete intersection scheme $Y$, to reduce the statement to a gluing statement for DG categories of $\fD$-modules on various schemes of singularities. 
The second step, perfomed in Section \ref{sec:proof}, uses the combinatorics of \cite{AG2} together with two key results (Propositions \ref{prop:pseudo-proper-schemes} and \ref{prop:excision}) proven in Section \ref{ssec:key-tools}.

\ssec{Reducing to a simpler statement}

\sssec{}

For $\PQ$ two standard parabolic subgroups of $G$, consider the stack 
$$
\N_{\PQ}
:=
 \fs_{P}^{-1}(O_{\LSP}) \ustimes{\LSP} \LSQ,
$$ 
where by abuse of notation $\fs_P$ denotes the singular codifferential of $\p_P: \LSP \to \LSG$. In words, $\N_{\PQ}$ is the stack of pairs $(\sigma_Q, A)$, where $\sigma_Q$ is a $Q$-local system and $A$ a horizontal section of $(\u_P)_{\sigma_Q}$.

\sssec{}

The functoriality of shifted cotangent bundles implies that the assignment $[\PQ] \squigto \N_{\PQ}$ extends to a functor $\Tw := \Tw(\Par) \to \Stk$, given by compatible diagrams
\begin{equation} \label{corr:N PQ correspondence}
\N_{P \subseteq P } = \fs_P^{-1}(O_{\LSP})
\longleftarrow
\N_{\PQ} = 
 \fs_{P}^{-1}(O_{\LSP}) \ustimes{\LSP} \LSQ
\hto
\N_{Q \subseteq Q } = \fs_Q^{-1}(O_{\LSQ}).
\end{equation}
By pulling back along the natural maps 
$$
\mu_{\PQ}: \N_\PQ \longto \N,
$$
we obtain a $\Dmod(\Sing(\LSG))$-linear functor 
\begin{equation} \label{eqn: mu pullback for global nilp cone}
\mu^!: 
\Dmod(\N)
\longto
\lim_{[\PQ] \in \Tw^\op}
\Dmod(\N_{\PQ}).
\end{equation}

\begin{rem} \label{rem: passage to left adjoints}
Since the maps appearing in \eqref{corr:N PQ correspondence} are proper, the RHS above can also be written as a colimit by passing to the left adjoints of the transition functors.
Furthermore, since the maps $\mu_{\PQ}$ are all proper, $\mu^!$ admits a $\Dmod(\Sing(\LSG))$-linear left adjoint.
\end{rem}

\sssec{}

For a technical reason (the fact that $\N_\dR$ is not $1$-affine) that will appear evident later, we need to consider a slightly different functor: namely, we need to repeat the above construction after having pulled back to an atlas $Y \tto \LSG$.
In order to apply the theory of Section \ref{sec:global complete intersect}, we will choose an atlas that is a global complete intersection.
To do so explicitly, let us fix a point $x \in X$ once and for all. The choice of $x \in X$ gives rise to a map $\LSG \to BG$ and, since $X$ is assumed to be connected, to a canonical atlas 
$$
Y
:=
\LSG \ustimes{BG} \pt.
$$
Even more explicitly, we can write $Y = Y' \times_{\g} \pt$, where $Y'$ is a certain smooth subscheme of $\LS_G^{\on{RS}} \times_{\g/G} \g$, see \cite[Section 10.6]{AG1} for more details.

\sssec{}

Denote by $Y_Q$ and $N$ the schemes obtained from $\LSQ$ and $\N$ by pulling back along the map $Y \tto \LSG$.
We have
$$
Y_Q
:=
\LSQ \ustimes{\LSG} Y
\simeq
\LSQ \ustimes{BQ} G/Q,
$$
We set
$$
N_{\PQ} := \fs_{P}^{-1}(O_{Y_P}) \times_{Y_P} Y_Q,
$$
where, by abuse of notation, we have denoted by $\fs_P: Y_P \times_Y \Sing(Y) \to \Sing(Y_P)$ the singular codifferential of the map $Y_P \to Y$.
Tautologically, $N_{\PQ}$ is the base-change along $Y\to \LSG$ of the moduli stack $\N_{\PQ}$.
The same procedure as above yields the $\Dmod(\Sing(Y))$-linear functor
$$
\mu_Y^!: \Dmod(N) \longto 
 \lim_{[\PQ] \in \Tw^\op}
\Dmod(N_{\PQ}).
$$
equipped with a $\Dmod(\Sing(Y))$-linear left adjoint. A remark parallel to Remark \ref{rem: passage to left adjoints} applies here: $\mu_Y^!$ admits a $\Dmod(\Sing(Y))$-linear left adjoint, $(\mu_Y)_!$.

\begin{thm} [Strong microlocal gluing] \label{mainthm:microlocal}
The functor $\mu_Y^!$ is an equivalence.
\end{thm}

\begin{rem}
This theorem implies that the functor $\mu^!$ of \eqref{eqn: mu pullback for global nilp cone} is an equivalence. For that, it suffices to prove that the natural maps $\mu_! \mu^! \to \id $ and $\id \to \mu^! \mu_!$ are isomorphisms. This can be checked after base-changing along $Y \tto \LSG$. Thanks to the fact that $\mu$ is pseudo-proper (see \ref{def:pseudo-proper} below), this boils down to the statement of Theorem~\ref{mainthm:microlocal}. 
\end{rem}

\sssec{}

The above theorem, to be proven in in the remainder of this paper, is the main ingredient in the proof of Theorem \ref{mainthm:strong-ICOHN}. Assuming its validity for the time being, let us show how to deduce Theorem \ref{mainthm:strong-ICOHN} from it.

As a preliminary observation, let us note that, since all the maps involved in the definition of $\mu_Y^!$ are $\Gm$-equivariant, it makes sense to consider the functor $(\mu_Y^!)^\Rightarrow$, which is also an equivalence.

\sssec{}

We wish to show that the functor
\begin{equation} 
\gamma^\strong: 
\ICoh_\N(\LSG) 
\longto
\lim_{[\PQ] \in \Tw^\op}
 \ICoh_0((\LSG)^\wedge_{\LSQ})^{P-\temp}
\end{equation} 
of Theorem \ref{mainthm:strong-ICOHN} is an equivalence. To apply the theory developed in Section \ref{sec:global complete intersect} and to connect with Theorem \ref{mainthm:microlocal}, we need to eliminate the stackyness of $\LSG$ and $\LSP$. To this end, recall that the above functor is $\H(\LSG)$-linear (in particular, $\QCoh(\LSG)$-linear) and so it suffices to prove it is an equivalence after pulling back along the atlas $Y \tto \LSG$.

\sssec{}

We have
$$
\QCoh(Y)
\usotimes{\QCoh(\LSG)}
\ICoh_\N(\LSG) 
\simeq
\ICoh_{N}(Y)
$$
and, by Section \ref{sssec:intermediate cat more familiar},
$$
\QCoh(Y)
\usotimes{\QCoh(\LSG)}
\ICoh_0((\LSG)^\wedge_{\LSQ})^{P-\temp}
\simeq
\ICoh_0(Y^\wedge_{Y_P})
\usotimes{\Dmod(Y_P)} \Dmod(Y_Q).
$$
Thus, it is enough to prove that the resulting functor
\begin{equation} \label{eqn:functor gamma Y strong}
\gamma_Y^\strong: 
\ICoh_N(Y)
\longto
\lim_{[\PQ]
 \in \Tw^\op}
\Bigt{
\ICoh_0(Y^\wedge_{Y_P})
\usotimes{\Dmod(Y_P)} \Dmod(Y_Q)
},
\end{equation}
is an equivalence.

\sssec{}

Since $Y$ is a global complete intersection, Section \ref{ssec:global complete intersect} yields an action of $\DSingY$ on $\ICoh(Y)$. Then
$$
\ICoh_N(Y)
\simeq
\ICoh(Y)
\usotimes{
\DSingY
}
\Dmod(N)^\unshift
$$ 
and, by Proposition \ref{prop:ICohzero via DSing},
\begin{eqnarray}
\nonumber
\ICoh_0(Y^\wedge_{Y_P})
\usotimes{\Dmod(Y_P)} \Dmod(Y_Q)
& \simeq &
\ICoh(Y)
\usotimes{\DSingY} \Dmod
\Bigt{ 
\fs_{P}^{-1}(O_{Y_P}) \times_{Y_P} Y_Q
}^\unshift
\\
\nonumber
& = &
\ICoh(Y)
\usotimes{\DSingY} \Dmod
(
N_{\PQ}
)^\unshift.
\end{eqnarray}
These equivalences and the compatibilities of Section \ref{ssec:singular codiff}
imply that the functor $\gamma_Y^\strong$ is obtained from $(\mu_Y^!)^\Rightarrow$ by tensoring up with $\ICoh(Y)$ over $\DSingY$.
It follows that $\gamma_Y^\strong$ is an equivalence.

\ssec{Two key tools} \label{ssec:key-tools}

In this section, we state and prove two results that will be needed
for the proof of Theorem \ref{mainthm:microlocal}. 
The first one is the fact that, under some properness conditions, equivalences of DG categories of $\fD$-modules can be checked on fibers. The second one, which is related to the first, is excision for $\fD$-modules.

\sssec{}\label{def:pseudo-proper}

Let $f: \X \to Y$ be a map of prestacks with $Y$ a scheme. We say that $f$ is \emph{pseudo-proper} if 
$$
\X \simeq \uscolim{a \in A} \, X_a,
$$
where each $f_a: X_a \to Y$ is a scheme proper over $Y$. As pointed out in Remark \ref{rem: passage to left adjoints}, whenever $f$ is pseudo-proper, the pullback functor $f^!: \Dmod(Y) \to \Dmod(\X)$ admits a left adjoint (to be denoted $f_!$).

\sssec{}

Recall from \cite{Crys} that DG categories of $\fD$-modules are defined only for $\Sch^{\on{lft}}_{\kk}$, the $1$-category of those (classical) schemes that are locally of finite type over the ground field $\kk$. Since we are going to consider field extensions, let us be more explicit with the notation: we denote by $\Dmod_{/\kk}: (\Sch^{\on{lft}}_{\kk})^\op \to \DGCat_\kk$ what has been denoted simply by $\Dmod$ throughout.
We are going to also consider the functor $\Dmod_{/\kk'}: (\Sch^{\on{lft}}_{\kk'})^\op \to \DGCat_{\kk'}$, with $\kk'$ a field extension of $\kk$. 
We have: 
$$
\Dmod_{/\kk'}(\Spec(\kk') \times_{\Spec(\kk)} Y)
\simeq
\Vect_{\kk'} 
\otimes _{\Vect_{\kk}}
\, \Dmod_{/\kk}(Y).
$$

\sssec{}

For a $\kk'$-point $y$ of $Y$, we denote by $(i_y)^!$ the functor
$$
\Dmod_{/\kk}(Y)
\to
\Vect_{\kk'} 
\otimes _{\Vect_{\kk}}
\Dmod_{/\kk}(Y)
\simeq
\Dmod_{/\kk'}
\bigt{
Y \times_{\Spec (\kk)} \Spec(\kk')
}
\longto
\Dmod_{/\kk'}(\Spec(\kk')) \simeq \Vect_{/\kk'},
$$ 
where the right arrow is the $!$-pullback along the finite type map $\Spec(\kk') \to \Spec(\kk') \times_{\Spec(\kk)} Y$ induced by $y$.
Alternatively, the same functor $(i_y)^!: \Dmod_{/\kk}(Y) \to \Vect_{\kk'}$ can be rewritten as
$$
\Dmod_{/\kk}(Y)  
\xto{\oblv_Y} 
\QCoh(Y)
\xto{(i_y)^*}
\Vect_{\kk'}.
$$
The following fact, which can be proven by Noetherian induction, was used in \cite[Lemma 6.1.7]{AG2}; it will be crucial also in the proof of Proposition \ref{prop:pseudo-proper-schemes} below.

\begin{lem} \label{lem:conservative system}
Let $Y$ be a scheme locally of finite type over $\kk$. The functors $(i_y)^!$, for all extensions $\kk' \supseteq \kk$ and all $y \in Y(\kk')$, form a conservative system: an object $\F \in \Dmod_{/\kk}(Y)$ is zero iff the objects $(i_y)^!(\F)$ are all zero. 
\end{lem}

\begin{prop} \label{prop:pseudo-proper-schemes}
Let $f: \X = \colim_{a \in A} X_a \to Y$ be a pseudo-proper map. Then $f^!: \Dmod(Y) \to \Dmod(\X)$ is an equivalence if and only if, for any field extension $\kk \subseteq \kk'$ and any $\kk'$-point $y \in Y$, the pullback 
$$
(\restr {f}y)^!:
\Vect_{\kk'}
\longto
\Dmod_{/
\kk'}
\bigt{
\X \times_Y \Spec(\kk')
}
$$ 
along $\X \times_Y \Spec \kk' \to \Spec \kk'$ is an equivalence.
\end{prop}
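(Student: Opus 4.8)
The plan is to prove the two implications by different means: \virg{only if} is a formal consequence of base change, whereas \virg{if} is obtained by passing to the unit and counit of the adjunction $f_! \dashv f^!$ and checking their invertibility on geometric fibers.

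For \virg{only if}: if $f^!$ is an equivalence, then, since $f$ is pseudo-proper, $\Dmod(\X)$ is dualizable (in fact self-dual) as a module over $\Dmod(Y)$, and $f^!$ is $\Dmod(Y)$-linear. Base change for $\fD$-modules — which one reduces, via the pseudo-properness of $f$, to the case of schemes — then identifies the functor $(\restr f y)^!$ with $\id_{\Dmod(y)}\usotimes{\Dmod(Y)} f^!$. Since tensoring an equivalence with a fixed DG category is again an equivalence, $(\restr f y)^!$ is an equivalence.

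For \virg{if}: assume each $(\restr f y)^!$ is an equivalence. I will show that both the unit $\id_{\Dmod(\X)} \to f^! f_!$ and the counit $f_! f^! \to \id_{\Dmod(Y)}$ of $f_! \dashv f^!$ are invertible; this says exactly that $f^!$ is an equivalence, with inverse $f_!$. For the counit: the $!$-pullbacks $\iota_y^!\colon \Dmod(Y) \to \Dmod(y)=\Vect$ along geometric points $\iota_y\colon y\to Y$ are jointly conservative, so it suffices to check invertibility after applying $\iota_y^!$ for each $y$. Proper base change along $\iota_y$ — legitimate because $f$, and hence every base change of it, is pseudo-proper — together with the tautological commutation $g_y^!\circ f^! \simeq (\restr f y)^!\circ\iota_y^!$, where $g_y\colon\X\ustimes Y y\to\X$, identifies $\iota_y^!$ of the counit of $f_!\dashv f^!$ with the counit of $(\restr f y)_!\dashv(\restr f y)^!$; the latter is invertible because $(\restr f y)^!$ is an equivalence. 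For the unit: the family $\{g_y^!\}_y$ is jointly conservative on $\Dmod(\X)$, since an object of $\Dmod(\X)=\lim_a\Dmod(X_a)$ vanishes iff it vanishes on every $X_a$, hence iff all its restrictions to geometric points vanish, and every geometric point of $\X$ factors through some $\X\ustimes Y y$; and the same base-change bookkeeping identifies $g_y^!$ of the unit of $f_!\dashv f^!$ with the unit of $(\restr f y)_!\dashv(\restr f y)^!$, again invertible. Hence unit and counit are both invertible and we are done.

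The step that will need the most care — and where I expect the write-up to concentrate — is the \virg{base-change bookkeeping}: verifying that the proper base-change isomorphism for pseudo-proper maps is compatible with the units and counits of the various $(f_!,f^!)$-adjunctions, so that the restricted (co)units are genuinely the (co)units of $(\restr f y)_!\dashv(\restr f y)^!$. This is standard but somewhat fiddly $2$-categorical bookkeeping. The remaining inputs — joint conservativity of geometric-fiber functors on a (finite-type) scheme, and dualizability of $\Dmod(\X)$ over $\Dmod(Y)$ for pseudo-proper $f$ — are standard.
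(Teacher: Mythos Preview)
Your proposal is correct and follows essentially the same approach as the paper: show that both the unit and counit of the adjunction $f_! \dashv f^!$ are invertible by reducing, via proper base change for pseudo-proper maps, to the fiberwise statement and invoking conservativity of $!$-restriction to geometric points. The only cosmetic difference is that for the unit the paper works component-by-component (checking $M_b \to (f_b)^! f_!(M)$ on each $X_b$ and then pointwise on $X_b$), whereas you pull back along $g_y\colon \X\times_Y y \to \X$ directly; and for the counit the paper simply cites \cite[Lemma~6.1.7]{AG2} rather than writing out the conservativity argument you give.
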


\begin{proof}
The \virg{only if} direction is obvious. To prove the converse, let us assume that each functor $(\restr {f}y)^!$ is an equivalence. We need to prove that the two natural transformations 
$$
\id_{\Dmod(\X)} \longto f^! \circ f_!
$$
$$
f_! \circ f^! \longto \id_{\Dmod(Y)}
$$
are equivalences. The latter has been dealt with in \cite[Lemma 6.1.7]{AG2} using Lemma \ref{lem:conservative system}, so let us focus on the former. It suffices to show that, for any $M=\{M_a\} \in \Dmod(\X)$ and any $b \in A$, the resulting map
$$
M_b \longto (f_b)^!  \circ f_!(M)
$$ 
is an isomorphism in $\Dmod(X_b)$. We will prove that the arrow
\begin{equation}\label{eqn:topolino}
(i_x)^!(M_b) \longto (i_x)^! \circ (f_b)^! \circ f_!(M)
\end{equation}
is an isomorphism in $\Vect_{\kk'}$ for any $\kk'$-point $i_x: \Spec(\kk') \to X_b$. This is enough in view of the above lemma. Consider the following diagram (with cartesian outer rectangle):
\begin{equation} 
\nonumber
\begin{tikzpicture}[scale=1.5]
\node (00) at (0,0) {$\Spec(\kk')$};
\node (10) at (1.5,0) {$X_b$};
\node (01) at (0,1) {$ \X_y$};
\node (11) at (3,1) {$\X$};
\node (20) at (3,0) {$Y$,};
\path[->,font=\scriptsize,>=angle 90]
(00.east) edge node[above] {$i_x$} (10.west); 
\path[->,font=\scriptsize,>=angle 90]
(01.east) edge node[above] {$\eta$} (11.west); 
\path[->,font=\scriptsize,>=angle 90]
(10.east) edge node[above] {$f_b$} (20.west); 
\path[->,font=\scriptsize,>=angle 90]
(01.south) edge node[right] {$\restr f y$ } (00.north);
\path[<-,font=\scriptsize,>=angle 90]
(11.south west) edge node[right] {$\;\;\iota_b$} (10.north east);
\path[->,font=\scriptsize,>=angle 90]
(11.south) edge node[right] {$f$ } (20.north);
\end{tikzpicture}
\end{equation}
where $\iota_b: X_b \to \X$ is the structure map and $y := f_b(x) \in Y(\kk')$.
Denote by $i_y$ the composition of the two lower horizontal arrows. By extending scalars from $\kk$ to $\kk'$ and renaming $\kk'$ with $\kk$, we can assume that $x$ is a $\kk$-point.
Since the outer rectangle is cartesian and $f$ is pseudo-proper, $f_!$ satisfies base-change against $(i_y)^!$.
We obtain:
$$
(i_x)^! \circ (f_b)^! \circ f_!(M)
\simeq
(\restr f y)_! \circ \eta^!(M).
$$
Note that the point $x \in X_b(\kk)$ yields a section $\wt i_x$ of $\restr f y$. Since $(\restr f y)_!$ is an equivalence by assumption, it follows that $(\restr f y)_! \simeq (\wt i_x)^!$.
Thus, the RHS of \eqref{eqn:topolino} is isomorphic to $(\wt i_x)^! \eta^! (M)$, which is in turn is manifestly isomorphic to $(i_x)^!(M_b)$.
\end{proof}

\sssec{}

Let us put the above result in context.
Set 
$$
N_\Glued := \uscolim{[\PQ] \in \Tw} N_\PQ
$$
and consider the pseudo-proper map $\mu_Y: N_\Glued \to N$ obtained by combining the various maps $N_{\PQ} \to N$. 
Theorem \ref{mainthm:microlocal} states that the pullback
\begin{equation} \label{eqn:glued-mu-equivalence}
\mu_Y^! :\Dmod(N) \to 
\Dmod(N_\Glued)
\end{equation}
is an equivalence. By the above proposition, this can be checked separately on field valued points of $N$ (and hence on field valued points of $\N$):

\begin{cor} \label{cor:can do pointwise}
For $\kk' \supseteq \kk$ a field extension and $(\sigma,A) \in \N(\kk')$ a point, consider the fiber
$$\N_\Glued^{\, \sigma,A} := \N_\Glued \times_\N \{(\sigma,A) \}.$$
If the $!$-pullback
\begin{equation} \label{eqn:glued-mu-equivalence-at-x}
\Vect_{\kk'}
\longto 
\Dmod_{/\kk'}(\N_\Glued^{ \,\sigma,A})
\end{equation}
is an equivalence for all $\kk'$ and all $(\sigma,A) \in \N(\kk')$, then (\ref{eqn:glued-mu-equivalence}) is an equivalence.
\end{cor}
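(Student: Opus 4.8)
The plan is to deduce the corollary from Proposition \ref{prop:pseudo-proper-schemes}. As recalled in the discussion above, smooth descent reduces the claim that $\mu^!$ is an equivalence to the corresponding claim for the base-changed functor
$$
\mu_Y^!: \Dmod(N) \longto \Dmod(N_\Glued),
\qquad
N := \N \ustimes{\LSG} Y, \quad N_\Glued := \N_\Glued \ustimes{\LSG} Y,
$$
whose target $N$ is now an honest scheme, as required by the proposition. Since base change along the atlas preserves colimits of prestacks, $N_\Glued \simeq \uscolim{[\PQ] \in \Tw} N_\PQ$ with $N_\PQ := \N_\PQ \ustimes{\LSG} Y$, so I intend to apply Proposition \ref{prop:pseudo-proper-schemes} to $\mu_Y$, with indexing $1$-category the finite poset $\Tw$.

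The first real step is to check that $\mu_Y: N_\Glued \to N$ is pseudo-proper, i.e. that each $N_\PQ$ is a scheme proper over $N$. The geometric input is that, for $Q \subseteq P$, the maps $\LSQ \to \LSP$ and $\LSP \to \LSG$ are representable and proper: in the de Rham setting a horizontal reduction of structure group along $Q \subseteq P$ is a monodromy-fixed point in the fibre over the marked point of the associated bundle with fibre the proper variety $P/Q$, so these maps have fibres that are closed subschemes of a partial flag variety. Granting this, I would factor $N_\PQ \to N$ as
$$
N_\PQ = \fs_P^{-1}(O_{Y_P}) \ustimes{Y_P} Y_Q
\;\longto\;
\fs_P^{-1}(O_{Y_P})
\;\hto\;
Y_P \ustimes{Y} \Sing(Y)
\;\longto\;
\Sing(Y),
$$
where the first arrow is a base change of the proper map $Y_Q \to Y_P$, the second is a closed immersion, and the third is a base change of the proper map $Y_P \to Y$; hence $N_\PQ \to \Sing(Y)$ is proper. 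Since this composite factors through the closed immersion $N \hto \Sing(Y)$, the map $N_\PQ \to N$ is proper as well, and $N_\PQ$ is a scheme. Thus $\mu_Y$ is pseudo-proper.

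Proposition \ref{prop:pseudo-proper-schemes} then asserts that $\mu_Y^!$ is an equivalence if and only if, for every geometric point $y \in N(\kk)$, the pullback $\Vect \to \Dmod(N_\Glued \ustimes{N} \{y\})$ is an equivalence. It remains to identify these fibres with the ones in the statement. Let $(\sigma, A) \in \N(\kk)$ be the image of $y$ under $N \to \N$. Because $\N_\Glued \to \LSG$ factors through $\N$, we have $N_\Glued \simeq \N_\Glued \ustimes{\N} N$, and so base change yields $N_\Glued \ustimes{N} \{y\} \simeq \N_\Glued \ustimes{\N} \{(\sigma,A)\} = \N_\Glued^{\, \sigma, A}$. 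Moreover $N \to \N$ is a smooth surjection (base change of the atlas), so every $(\sigma, A) \in \N(\kk)$ occurs as the image of some $y \in N(\kk)$; therefore the hypothesis of the corollary is exactly the hypothesis of Proposition \ref{prop:pseudo-proper-schemes} applied to $\mu_Y$, and the conclusion follows: $\mu_Y^!$, hence $\mu^!$, is an equivalence.

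I expect the only non-formal point to be the pseudo-properness of $\mu_Y$, and even there the single substantive ingredient is the properness of $\LSP \to \LSG$ (equivalently, of the partial flag varieties), which I would cite rather than reprove. Everything else — the passage to the atlas, the commutation of colimits with base change, and the identification of the geometric fibres — is bookkeeping.
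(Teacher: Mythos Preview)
Your proof is correct and follows essentially the same approach as the paper: base-change along the atlas $Y \to \LSG$ to land in the schematic setting, observe that each $\mu_{\PQ}$ is schematic and proper so that $\mu_Y$ is pseudo-proper, and apply Proposition~\ref{prop:pseudo-proper-schemes}. The paper's proof is a two-line sketch of exactly this; you have simply unpacked the details (the factorization establishing properness, and the identification of the geometric fibers with $\N_\Glued^{\,\sigma,A}$) that the paper leaves implicit.
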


\begin{example} \label{example trivial loc sys}
In the case $\sigma$ is the trivial $G$-local system, we obtain the statement of Theorem \ref{thm:D-mod on Nilp local}.
\end{example}

\sssec{} \label{sssec:beginning of definition of Spr glued unip}

When checking that \eqref{eqn:glued-mu-equivalence-at-x} is an equivalence for a given $\kk'$-point of $\N$, we can replace $\N$ with its extension to $\kk'$ and then rename $\kk'$ by $\kk$. Thus, to prove Theorem \ref{mainthm:microlocal}, it suffices to check that the arrow \eqref{eqn:glued-mu-equivalence-at-x} is an equivalence for any $\kk$-point $(\sigma, A)$. Hereafter, the pair $(\sigma, A)$ will always denote a $\kk$-point of $\N$.

\sssec{}

Observe that the prestack $\N_\Glued^{\, \sigma,A}$ already appeared in \cite[Sections 7.1.4-7.1.7]{AG2} under the name of $\Spr_{\Glued, \unip}^{\, \sigma,A}$, a name that we adopt from now on.
Accordingly, we also set
$$
\Spr_{\PQ, \unip}^\sigmaA
:=
\N_{\PQ} \ustimes \N \{(\sigma, A)\}.
$$
This is the scheme of $Q$-reductions of $\sigma$ with the property that $A$ is a horizontal section of $(\u_P)_{\sigma_Q}$.
Like $\N_{\PQ}$, the assignment
\begin{equation} \label{eqn:pippa}
\Spr^{\sigmaA}_{\Tw, \unip}:
[\PQ] \squigto \Spr_{\PQ, \unip}^\sigmaA
\end{equation}
is a functor out of $\Tw := \Tw(\Par)$.

\begin{rem} \label{rem:strings vs twisted arrows}
Actually, the prestack $\Spr_{\Glued, \unip}^{\, \sigma,A}$ of \cite[Sections 7.1.4-7.1.7]{AG2} was defined slightly differently. Namely, it was defined as the colimit of the composition
$$
\String(\Par) \xto{\beta} \Tw(\Par)
\xto{\eqref{eqn:pippa}} \PreStk,
$$
where $\String(\Par)$ is the $1$-category of strings of parabolics defined in \cite[Section 7.1.2]{AG2}, and $\beta$ is the natural map that retains only the first and last element of a string. 
It is clear that $\beta$ is cofinal, so our definition of $\Spr_{\Glued, \unip}^{\, \sigma,A}$ agrees with the one of \cite[Sections 7.1.4-7.1.7]{AG2}.
\end{rem}

\sssec{}

In \cite{AG2}, it is shown that
\begin{equation} \label{eqn:springer-glued-unipotent}
\Spr_{\Glued, \unip}^{\, \sigma,A}
:=
\uscolim{[\PQ] \in \Tw} \Spr_{\PQ, \unip}^\sigmaA
\end{equation}
is homologically contractible, that is, that the pullback functor
$$
\Vect_\kk
 \longto
 \Dmod(\Spr_{\Glued, \unip}^{\, \sigma,A})
$$
is \emph{fully faithful}. To prove our main theorem, we need to show more: we need to show that the same functor is an equivalence. We will do so in Section \ref{sec:proof} by revisiting the proof of the fully faithfulness given in \cite{AG2}. Essentially, the only improvement to be made is the usage of the full force of the following \emph{excision} result. (The authors of \cite{AG2} only use half of the statement.)

\begin{prop} [$\fD$-module excision] \label{prop:excision} 
Let $f: Y \to Y'$ be a proper morphism of schemes. Let $\iota: Y_0 \hto  Y$ and $\iota': Y'_0 \hto Y'$ be two closed embeddings with the property that $f(Y_0) \subseteq Y'_0$.
If $f$ yields an isomorphism between the two complementary open subschemes, then the natural !-pullback functor
$$
\alpha = (f^!, \on{id}_{\Vect}) :
\Dmod(Y')
\ustimes{\Dmod(Y'_0)}
\Vect
\longto
\Dmod(Y)
\ustimes{\Dmod(Y_0)}
\Vect
$$
is an equivalence.
\end{prop}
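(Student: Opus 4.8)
\medskip

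\noindent\textbf{Proof proposal.}
The plan is to recognize both sides of $\alpha$ as categories of $\fD$-modules on a pushout prestack and then to run the open--closed recollement. Write $j\colon U\hto Y$ and $j'\colon U'\hto Y'$ for the open complements of $\iota$ and $\iota'$. By hypothesis $f$ restricts to an isomorphism $f_U\colon U\xrightarrow{\sim}U'$, and since $f$ is an isomorphism over $U'$ one gets $f^{-1}(U')=U$, hence $f^{-1}(Y'_0)=Y_0$ on underlying spaces. Now, taking the structure functor $\Vect=\Dmod(\pt)\to\Dmod(Y'_0)$ to be $!$-pullback along $Y'_0\to\pt$ (as in the intended applications), and using that $\Dmod$ sends colimits of prestacks to limits of DG categories, I would identify
$$
\Dmod(Y')\ustimes{\Dmod(Y'_0)}\Vect\;\simeq\;\Dmod(W'),\qquad W':=Y'\underset{Y'_0}{\sqcup}\pt,
$$
and likewise the source of $\alpha$ with $\Dmod(W)$, $W:=Y\sqcup_{Y_0}\pt$. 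Under these identifications $\alpha$ becomes the $!$-pullback $(\bar f)^!$ along the map $\bar f\colon W\to W'$ induced by $f$, by $f|_{Y_0}$ and by $\id_\pt$. So it suffices to prove that $(\bar f)^!$ is an equivalence.

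\medskip

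Next I would set up the recollement on $W$ and $W'$. The composite $U\hto Y\to W$ is an open immersion $\jmath\colon U\hto W$ whose closed complement is exactly the tautological point $a\colon\pt\hto W$ (this can be checked after the flat cover $Y\tto W$, where it reduces to the statement for the triple $(Y,Y_0,U)$); similarly $\jmath'\colon U'\hto W'$ has closed complement $a'\colon\pt\hto W'$. Hence $\Dmod(W)$ is the recollement gluing of the open piece $\Dmod(U)$ and the closed piece $\Dmod(\pt)=\Vect$ along the gluing functor $a^!\jmath_*\colon\Dmod(U)\to\Vect$, and $\Dmod(W')$ is the analogous gluing of $\Dmod(U')$ and $\Vect$ along $a'^!\jmath'_*$. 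By construction $\bar f$ sends the open stratum of $W$ to that of $W'$ via $f_U$ and fixes the closed point, so on the two pieces $(\bar f)^!$ is the pair $(f_U^!,\id_\Vect)$, and $f_U^!$ is an equivalence because $f_U$ is an isomorphism. Therefore $(\bar f)^!$ is an equivalence precisely when it intertwines the gluing functors, i.e.\ when the canonical transformation
$$
a^!\,\jmath_*\,f_U^!\;\longrightarrow\;a'^!\,\jmath'_*
$$
of functors $\Dmod(U')\to\Vect$ is invertible.

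\medskip

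The invertibility of this transformation is the one substantive step, and it is exactly where properness of $f$ is spent — the ``half'' of the statement used in \cite{AG2} corresponds to only one of the two adjunction morphisms of $(\bar f)_!\dashv(\bar f)^!$ being an isomorphism, whereas here both are needed. Restriction to the point $a$ is, via the Cartesian square $Y_0=Y\times_W\pt$ with legs $\iota\colon Y_0\hto Y$ and $p\colon Y_0\to\pt$, the same as restriction along $\iota$ followed by $p_*$; and the identification $f^{-1}(Y'_0)=Y_0$ makes the square with corners $Y_0,Y,Y'_0,Y'$ Cartesian on reduced loci, hence a base-change square for $\Dmod$. Combining these with base change along the \emph{proper} map $f$ (so that $f_*=f_!$ commutes with $!$-restriction to closed subschemes) and with the fact that $f_U$ is an isomorphism, the displayed arrow is exhibited as a composite of base-change isomorphisms. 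Granting this, one invokes the elementary fact that a functor between two DG categories, each presented as a recollement gluing of the same pair of categories along the same gluing functor, which is an equivalence on each piece, is itself an equivalence; applied to $(\bar f)^!$ this proves the proposition. I expect this last compatibility — tracking the gluing functors through $\bar f$ and reducing it to proper base change — to be the only real obstacle; everything else is formal. (If one prefers to bypass pushout prestacks, the same computation can be carried out directly on $\Dmod(Y')\ustimes{\Dmod(Y'_0)}\Vect$ after noting that $\iota'^!$ realizes $\Dmod(Y'_0)$ as the Verdier quotient $\Dmod(Y')/\Dmod(U')$.)
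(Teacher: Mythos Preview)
Your overall strategy --- realize both fiber products as recollements (equivalently, as the paper's ``exact sequences'') with closed piece $\Vect$ and open piece $\Dmod(U)\simeq\Dmod(U')$, then check that $\alpha$ matches the gluing --- is exactly the paper's approach. The detour through the pushout prestacks $W,W'$ is harmless but unnecessary; the paper works directly with the fiber product categories.

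There is, however, a genuine gap. The functor you call the gluing functor, $a^!\,\jmath_*$, is \emph{identically zero}: for any recollement with closed inclusion $a$ and open inclusion $\jmath$ one has
$$
\Hom(V,\,a^!\jmath_*\G)\;=\;\Hom(a_*V,\,\jmath_*\G)\;=\;\Hom(\jmath^*a_*V,\,\G)\;=\;\Hom(0,\G)\;=\;0,
$$
so your displayed transformation $a^!\jmath_* f_U^!\to a'^!\jmath'_*$ is $0\to 0$ regardless of whether $f$ is proper. Consequently the ``one substantive step'' you isolate is vacuous as written, and the argument as it stands would prove the proposition without the properness hypothesis --- which is false (take $Y=\AA^1\hookrightarrow Y'=\PP^1$ with $Y_0=\emptyset$, $Y'_0=\{\infty\}$: all four of the paper's squares commute, yet $\alpha$ is not an equivalence).

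The fix is to use the correct gluing functor, namely $a^*\jmath_*$ (equivalently, up to shift, $a^!\jmath_!$), which uses the \emph{left} adjoint $a^*$ of $a_*$. Checking that $(\bar f)^!$ intertwines $a^*\jmath_*$ with $a'^*\jmath'_*$ is precisely where proper base change for $f$ is required, and with this correction your argument goes through. Incidentally, the paper's proof is itself quite terse on this point: its ``the four squares commute, hence $\alpha$ is an equivalence'' elides exactly the same verification.
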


\begin{proof}
The properness of $f$ implies that $\alpha$ admits a left adjoint, $\alpha^L$, which sends $(\F,V)$ to $(\G,V)$, where $\G$ is the colimit of
$$
\begin{tikzpicture}[scale=1.5]
\node (00) at (0,0) {$(\iota')_! (\omega_{Y'_0}) \otimes V$};
\node (01) at (0,1) {$(\iota')_! ((f_0)_!\omega_{Y_0}) \otimes V \simeq f_!(\iota_! \iota^! \F)$};
\node (11) at (3,1) {$f_!(\F)$};
\path[->,font=\scriptsize,>=angle 90]
(01.east) edge node[above] {$ $} (11.west); 
\path[->,font=\scriptsize,>=angle 90]
(01.south) edge node[right] {$ $} (00.north);
\end{tikzpicture}
$$
in $\Dmod(Y')$.
To prove that $\alpha$ is an equivalence, we apply the Barr-Beck-Lurie theorem: since $\alpha$ is clearly (continuous and) conservative, it remains to show that $\alpha^L$ is fully faithful.
Denote by $j: U := (Y-Y_0) \hto Y$ the open embedding and by $\C$ the DG category $\Dmod(Y)
\times_{\Dmod(Y_0)}\Vect$. The notations $j'$, $U'$ and $\C'$ bear the obvious parallel meaning.
We observe that $\C$ sits in the exact sequence of DG categories
\begin{equation} 	\nonumber
\begin{tikzpicture}[scale=1.5]
\node (a) at (0,1) {$\Vect$};
\node (b) at (1.4,1) {$\C$};
\node (c) at (3,1) {$\Dmod(Y -Y_0)$,};
\path[right hook ->,font=\scriptsize,>=angle 90]
([yshift= 1.5pt]a.east) edge node[above] {$ \i$} ([yshift= 1.5pt]b.west);
\path[->>,font=\scriptsize,>=angle 90]
([yshift= -1.5pt]b.west) edge node[below] {$ \i^R$} ([yshift= -1.5pt]a.east);
\path[->>,font=\scriptsize,>=angle 90]
([yshift= 1.5pt]b.east) edge node[above] {$ \p$} ([yshift= 1.5pt]c.west);
\path[right hook ->,font=\scriptsize,>=angle 90]
([yshift= -1.5pt]c.west) edge node[below] {$\p^R$} ([yshift= -1.5pt]b.east);
\end{tikzpicture}
\end{equation}
with functors defined as follows:
\begin{eqnarray}
\nonumber
& & \i: V \squigto (\iota_*(V \otimes \omega_{Y_0}), V); 
\\
\nonumber
& & \i^R: (\F, W) \squigto W; 
\\
\nonumber
& & \p: (\F, W) \squigto j^!(\F); 
\\
\nonumber
& & \p^R : \G \squigto (j_*(\G), 0).
\end{eqnarray}
Consider also the same exact sequence for $\C'$. By the assumptions, $\alpha: \C' \to \C$ extends to a functor out of exact sequences (that is, the four squares commute), with the two outer terms being equivalences.
To prove that $\alpha^L$ is fully faithful, it suffices to check that the two natural arrows
$$
\i \longto \alpha \circ \alpha^L \circ \i,
\hspace{.6cm}
\p^R \longto \alpha\circ \alpha^L \circ \p^R
$$
are isomorphisms. Both claims are clear by inspection.
\end{proof}

\begin{rem}
The DG category $\Dmod(Y) \times_{\Dmod(Y_0)} \Vect$ may be regarded as the DG category of $\fD$-modules on the one-point compactification of $Y-Y_0$.
\end{rem}

\sec{Contractibility of glued Springer fibers} \label{sec:proof}

In this final section, we prove Theorem \ref{mainthm:microlocal} and consequently Theorem \ref{mainthm:strong-ICOHN}. We follow \cite [Sections~7-8]{AG2} very closely.

\ssec{Springer fibers}

Recall  $\Spr_{\Glued, \unip}^{\, \sigma,A}$, the prestack of \emph{glued Springer fibers},  defined in Section \ref{sssec:beginning of definition of Spr glued unip} and in (\ref{eqn:springer-glued-unipotent}).
By Corollary \ref{cor:can do pointwise}, it is enough to prove the following result, which is our goal until the end of the paper.

\begin{thm} \label{thm:contractibility-of-twisted-prestack}
For any $(\sigma, A) \in \N(\kk)$, the pullback functor
$$
\Vect_{\kk} 
\longto
\Dmod(\Spr_{\Glued, \unip}^{\, \sigma,A})
$$
is an equivalence.
\end{thm}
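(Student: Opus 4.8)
The plan is to follow \cite[Sections~7--8]{AG2} almost verbatim. That argument already shows the pullback functor $\Vect \to \Dmod(\Spr_{\Glued, \unip}^{\, \sigma,A})$ is fully faithful --- equivalently, that the glued Springer prestack is cohomologically contractible --- so the only thing left to establish is essential surjectivity, and the single modification needed is to invoke the \emph{full} equivalence of Proposition~\ref{prop:excision} wherever \cite{AG2} used merely the fully faithfulness of the associated comparison functor. As preparation: since $\Spr_{\Glued, \unip}^{\, \sigma,A} = \uscolim{[\PQ] \in \Tw(\Par^\op)} \Spr_{\PQ, \unip}^\sigmaA$ with each $\Spr_{\PQ, \unip}^\sigmaA$ a closed subscheme of a $\sigma$-twisted partial flag bundle over $\pt$, hence proper, the map $\mu \colon \Spr_{\Glued, \unip}^{\, \sigma,A} \to \pt$ is pseudo-proper; thus $\mu^!$ admits a left adjoint $\mu_!$, and --- given fully faithfulness of $\mu^!$ from \cite{AG2} --- it is enough to show that $\mu_!$ is conservative.

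I would prove this by induction on the semisimple rank $\rank(G)$, reproducing the combinatorics of \cite{AG2}. When $\rank(G) = 0$, the group is a torus, $\Par = \{G\}$, and $\Spr_{\Glued, \unip}^{\, \sigma,A}$ is a point, so there is nothing to prove. For the inductive step, fix a maximal proper standard parabolic $P_0$ and split $\Tw(\Par^\op)$ into the twisted arrows $[\PQ]$ with $P \subseteq P_0$ and the remaining ones. As in \cite{AG2}, this presents the glued Springer prestack as a pushout
$$
\Spr_{\Glued, \unip}^{\, \sigma,A} \;\simeq\; \mathcal{Z}_1 \sqcup_{\mathcal{Z}_0} \mathcal{Z}_2 ,
$$
in which $\mathcal{Z}_2$ is built --- by a twisted flag-bundle construction --- from the glued Springer prestack of the Levi of $P_0$ for an induced Arthur parameter (so that the inductive hypothesis applies to it), $\mathcal{Z}_0$ sits inside $\mathcal{Z}_1$ and $\mathcal{Z}_2$ as a closed subscheme, and there is a proper map between $\mathcal{Z}_1$ and $\mathcal{Z}_2$, compatible with the $\mathcal{Z}_0$'s, which restricts to an isomorphism over the open complements --- i.e.\ exactly the data of Proposition~\ref{prop:excision}. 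The inductive hypothesis gives $\Dmod(\mathcal{Z}_2 \sqcup_{\mathcal{Z}_0} \pt) \simeq \Vect$, and the full equivalence in Proposition~\ref{prop:excision} then upgrades this to $\Dmod(\Spr_{\Glued, \unip}^{\, \sigma,A}) \simeq \Vect$. In conjunction with Corollary~\ref{cor:can do pointwise} this proves Theorem~\ref{mainthm:microlocal}, hence Theorem~\ref{mainthm:strong-ICOHN}.

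The hard part is the combinatorial bookkeeping of the inductive step: one must verify, from the structure of $\Tw(\Par^\op)$ and of the nilpotent section $A$, that the pushout decomposition above is genuinely of excision type --- in particular that the relevant proper map is an isomorphism away from the collapsed closed subscheme --- and that $\mathcal{Z}_0$ and $\mathcal{Z}_2$ are themselves assembled from glued Springer prestacks of strictly smaller semisimple rank, so that the induction closes. A secondary, more bureaucratic issue is that the pushouts in question live among prestacks while Proposition~\ref{prop:excision} is phrased for schemes; following \cite{AG2}, one handles this by presenting each glued Springer prestack as a finite iterated pushout of proper schemes and applying excision one elementary pushout at a time.
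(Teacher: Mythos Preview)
Your high-level instinct---induction via Levis together with excision---is right, but the specific decomposition you propose does not match \cite{AG2} and would not produce an excision square. The paper's proof is genuinely two-stage, and you have collapsed the two stages into one incorrect step.

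\medskip

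\textbf{Stage 1.} One first replaces $\Spr_{\Glued,\unip}^{\sigmaA}$ (indexed by $\Tw(\Par^\op)$) with a simpler prestack $\Spr_{\Glued}^{\sigmaA} := \colim_{R \in \Par'} \Spr_R^{\sigmaA}$ indexed by the poset $\Par'$ of \emph{proper} parabolics. This goes through an intermediate ``mixed'' prestack indexed by twisted \emph{triples}; the comparison $\Dmod(\Spr_{\Glued}^{\sigmaA}) \to \Dmod(\Spr_{\Glued,\unip}^{\sigmaA})$ is an equivalence by a pseudo-proper pointwise check that reduces, fiber by fiber, to the statement of the theorem for the Levi quotient $M$ of each proper $R$. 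This is where induction on semisimple rank enters---but note it is used to compare two glued prestacks, not to decompose a single one as a pushout.

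\medskip

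\textbf{Stage 2.} One then shows $\Dmod(\Spr_\Glued^{\sigmaA}) \simeq \Vect$ by a completely different induction: the Jacobson--Morozov parabolic $P_0$ associated to $A$ (not an arbitrarily chosen maximal parabolic) determines a subset $W' \subseteq W$ and a stratification of each $\Spr_R^{\sigmaA}$, hence of $\Spr_\Glued^{\sigmaA}$, by relative position $w \in W'$. Induction is on the length of $w$. At each step one must show $\Dmod\bigl(\Spr_\Glued^{\sigmaA,\leq w}/\Spr_\Glued^{\sigmaA,<w}\bigr) \simeq \Vect$, and it is precisely here---comparing $\Spr_{\wt P}^{\sigmaA,\leq w}/\Spr_{\wt P}^{\sigmaA,<w}$ with $\Spr_{P}^{\sigmaA,\leq w}/\Spr_{P}^{\sigmaA,<w}$ for the map $\wt P \hto P$ determined by $w$---that Proposition~\ref{prop:excision} is applied in full strength.

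\medskip

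Your proposal to fix one maximal parabolic $P_0$ and split $\Tw(\Par^\op)$ into $\{[\PQ]: P \subseteq P_0\}$ and its complement does not arise in \cite{AG2}, does not give a pushout presentation of the colimit (the complement is not closed under the relevant morphisms once $\rank(G) \geq 2$), and there is no evident proper map between the two pieces restricting to an isomorphism on open complements. The Jacobson--Morozov combinatorics are essential and cannot be replaced by an arbitrary choice of maximal parabolic.
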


\begin{example} \label{example: A=0}
If $A=0$, it is easy to verify that the prestack $\Spr_{\Glued, \unip}^{\, \sigma,A}$ is isomorphic to $\pt = \Spec(\kk)$. The argument appears in \cite[Remark 7.1.10]{AG2}.
Hence, it suffices to treat the case $A \neq 0$, which we assume from now on. Note that $A \neq 0$ implies that $\Spr_{Q \subseteq G, \unip}^\sigmaA \simeq \emptyset$ for any $Q \in \Par$.
\end{example}

\begin{example}
Let $\sigma$ be the trivial local system, so that $A$ is just a nilpotent element of $\g$. If $A$ is regular, then it is elementary to verify that (at the level of reduced schemes)
$$
\Spr_{\PQ,\unip}^{\sigma, A} 
\simeq
\begin{cases} 
\emptyset & \mbox{if } P \neq B \\
\pt  & \mbox{if } P=Q=B,
 \end{cases}
$$
the latter statement corresponding to the well-known fact that a regular nilpotent element is contained in exactly one Borel subgroup. Hence, $\Spr_{\Glued, \unip}^{\, \sigma,A} \simeq \Spr_{B \subseteq B,\unip}^{\sigma, A} \simeq \pt$.
\end{example}

\begin{example}
Let $G=\mathrm{GL}_3$ and $\sigma$ the trivial local system as before, but this time we take $A$ to be a subregular nilpotent element. Denoting by $P_1$ and $P_2$ the two maximal parabolic subgroups, we have (again at the level of reduced schemes):
$$
\Spr_{\PQ,\unip}^{\sigma, A} 
\simeq
\begin{cases} 
\emptyset & \mbox{for } [\mbox{any} \subseteq G] \\
\pt & \mbox{for $[P_1 \subseteq P_1]$ and $[P_2 \subseteq P_2]$} \\
\PP^1 & \mbox{for $[B \subseteq P_1]$ and $[B \subseteq P_2]$} \\
\PP^1 \vee \PP^1  & \mbox{for } [B \subseteq B],
 \end{cases}
$$
so that $\Spr_{\Glued, \unip}^\sigmaA$ is the colimit of the diagram
$$
\pt
\twoheadleftarrow
\PP^1
\hto
\PP^1 \vee \PP^1
\hookleftarrow
\PP^1
\tto
\pt.
$$
Note that $\PP^1 \vee \PP^1$ is the \emph{scheme pushout} of the correspondence $\PP^1 \hookleftarrow \pt \hto \PP^1$, which is different\footnote{This follows from \cite[Chapter III.1, Remark 1.4.3]{Book}.} from the \emph{prestack pushout} of the same diagram. However, reasoning as in Proposition \ref{prop:pseudo-proper-schemes}, the natural map from the latter to the former induces an equivalence on $\fD$-modules. In particular, we have
$$
\Dmod(\PP^1 \vee \PP^1)
\simeq
\Dmod(\PP^1) \ustimes{\Dmod(\pt)} \Dmod(\PP^1),
$$
so that $\Dmod(\Spr_{\Glued, \unip}^\sigmaA)$ is equivalent to the DG category of $\fD$-modules on the prestack colimit of the diagram
$$
\pt
\twoheadleftarrow
\PP^1
\hookleftarrow
\pt
\hto
\PP^1
\tto
\pt.
$$
The latter colimit is evidently isomorphic to $\pt$.

\end{example}

\sssec{}

For any $P \in \Par$, recall the classical scheme $\Spr_P^{\sigmaA}$ that parametrizes $P$-reductions $\sigma_P$ of $\sigma$ with the property that $A$ belongs to $H^0(X_\dR, \p_{\sigma_P})$. 
Let $\Par' := \Par - \{G\}$ be the poset of proper parabolics and form the prestack
$$
\Spr_\Glued^\sigmaA
:=
\uscolim{R \in \Par'} \Spr_R^\sigmaA.
$$

\sssec{}

We shall relate the latter to our $\Spr_{\Glued, \unip}^\sigmaA$. To this end, consider the poset $\TwTr'$ of \emph{twisted triples} in $\Par'$: an object of $\TwTr'$ is a triple $[\PQ \subseteq R]$ of proper parabolics, a morphism from such triple to $[Q' \subseteq P' \subseteq Q' ]$ exists if and only if 
$$
Q \subseteq Q' \subseteq P' \subseteq P \subseteq R  \subseteq R'.
$$
We have functors
$$
\phi_1 :\TwTr' \to \Tw(\Par'), 
\hspace{.4cm}
[\PQ \subseteq R] \squigto [\PQ],
$$
$$
\phi_2 : \TwTr' \to \Par', 
\hspace{.4cm}
[\PQ \subseteq R] \squigto R.
$$

\sssec{}

Setting
$$
\Spr_{\Glued, \mixed}^\sigmaA :=
\colim \, \Spr^{\sigmaA}_{\Tw, \unip} \circ \phi_1
\simeq
\uscolim{ [\PQ \subseteq R] \in \TwTr'} \Spr^{\sigmaA}_{\PQ, \unip}
,
$$
we have the natural correspondence
\begin{equation} \label{eqn:various springers}
\Spr_{\Glued}^\sigmaA
\longleftarrow
\Spr_{\Glued, \mixed}^\sigmaA
\longto
\Spr_{\Glued, \unip}^\sigmaA.
\end{equation}

\sssec{}

In \cite[Section 7.3.4]{AG2}, it is proven\footnote{One needs to take into account Remark \ref{rem:strings vs twisted arrows}.} that the right arrow of \eqref{eqn:various springers} is an isomorphism of prestacks. 
Hence, Theorem \ref{thm:contractibility-of-twisted-prestack} is a consequence of the two results below.

\begin{prop} \label{prop:from-mixed-to-simpler-gluing}
Assume that Theorem \ref{thm:contractibility-of-twisted-prestack} holds true for all proper Levi's of $G$. For $A \neq 0$, the pullback functor
$$
\Dmod(\Spr_{\Glued}^\sigmaA)
\longto
\Dmod(\Spr_{\Glued, \mixed}^\sigmaA)
$$
is an equivalence.
\end{prop}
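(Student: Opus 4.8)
The plan is to compute the colimit defining $\Spr_{\Glued, \mixed}^\sigmaA$ in two stages along the functor $\phi_2 : \TwTr' \to \Par'$, reduce the inner stage to Theorem~\ref{thm:contractibility-of-twisted-prestack} for the proper Levi subgroups of $G$ (which is available by hypothesis), and then pass to the limit over $\Par'$. One checks first that $\phi_2$ is a coCartesian fibration of posets: the coCartesian lift of an inclusion $R \subseteq R'$ at an object $[R \supseteq \PQ]$ is the evident morphism $[R \supseteq \PQ] \to [R' \supseteq \PQ]$. Since the functor $[R \supseteq \PQ] \squigto \Spr_{\PQ, \unip}^\sigmaA$ defining $\Spr_{\Glued, \mixed}^\sigmaA$ factors through $\phi_1$, general nonsense then gives
$$
\Spr_{\Glued, \mixed}^\sigmaA
\;\simeq\;
\uscolim{R \in \Par'}
\Bigt{ \uscolim{R \supseteq P \supseteq Q} \Spr_{\PQ, \unip}^\sigmaA }
\;=:\;
\uscolim{R \in \Par'} \Spr_{R, M_R}^\sigmaA,
$$
to be compared with $\Spr_\Glued^\sigmaA = \uscolim{R \in \Par'} \Spr_R^\sigmaA$. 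Here the inner colimit is indexed by the fiber $\phi_2^{-1}(R) = \{[R \supseteq \PQ] : R \supseteq P \supseteq Q\}$, which is canonically identified with $\Tw(\Par_{M_R}^\op)$ — the twisted arrows of the poset of standard parabolics of the Levi $M_R$ of $R$ — via $[\PQ] \leftrightarrow [P \cap M_R \supseteq Q \cap M_R]$. Finally, the forgetful maps $\sigma_Q \mapsto \sigma_R$ assemble into a map $q_R : \Spr_{R, M_R}^\sigmaA \to \Spr_R^\sigmaA$, natural in $R \in \Par'$.

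The crux is to identify the fibers of $q_R$. For a $\kk$-point $\sigma_R \in \Spr_R^\sigmaA$, write $\sigma_{M_R}$ for the induced $M_R$-local system and $A_{M_R}$ for the image of $A$ under the Levi projection $\r \to \fm_R$. First I would check that $A_{M_R}$ is fiberwise nilpotent — a nilpotent element of $\fg$ lying in a parabolic subalgebra has nilpotent image in the Levi quotient (reduce to $GL_n$) — so that $(\sigma_{M_R}, A_{M_R})$ is a genuine point of the global nilpotent cone of $M_R$. Next, since colimits of prestacks commute with base change, it is enough to compute each $\Spr_{\PQ, \unip}^\sigmaA \ustimes{\Spr_R^\sigmaA} \{\sigma_R\}$; using the direct-sum decomposition of flat subbundles of $\fg_\sigma$
$$
(\u_P)_{\sigma_Q} \;\simeq\; (\u_{P \cap M_R})_{\sigma_{Q \cap M_R}} \;\oplus\; (\u_R)_{\sigma_R}
\qquad\text{valid for } R \supseteq P \supseteq Q,
$$
the condition $A \in (\u_P)_{\sigma_Q}$ becomes equivalent to $A_{M_R} \in (\u_{P \cap M_R})_{\sigma_{Q \cap M_R}}$, the $(\u_R)_{\sigma_R}$-component of $A$ being determined by $\sigma_R$ alone. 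This exhibits $\Spr_{\PQ, \unip}^\sigmaA \ustimes{\Spr_R^\sigmaA} \{\sigma_R\}$ as $\Spr_{[P \cap M_R \supseteq Q \cap M_R], \unip}^{(\sigma_{M_R}, A_{M_R})}$ for the group $M_R$; taking the colimit over $\phi_2^{-1}(R) \simeq \Tw(\Par_{M_R}^\op)$ then yields $q_R^{-1}(\sigma_R) \simeq \Spr_{\Glued, \unip}^{(\sigma_{M_R}, A_{M_R})}$, the glued unipotent Springer prestack of $M_R$ at $(\sigma_{M_R}, A_{M_R})$.

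To conclude, the maps $\mu_\PQ$ are schematic and proper (as recalled in the proof of Corollary~\ref{cor:can do pointwise}), so each $\Spr_{\PQ, \unip}^\sigmaA$ is a proper scheme and each map $\Spr_{\PQ, \unip}^\sigmaA \to \Spr_R^\sigmaA$ is proper; being a finite colimit of such maps, $q_R$ is pseudo-proper over the scheme $\Spr_R^\sigmaA$. By Proposition~\ref{prop:pseudo-proper-schemes}, $q_R^! : \Dmod(\Spr_R^\sigmaA) \to \Dmod(\Spr_{R, M_R}^\sigmaA)$ is an equivalence provided $\Vect \to \Dmod(q_R^{-1}(\sigma_R))$ is an equivalence for every $\kk$-point $\sigma_R$; by the previous paragraph this is precisely Theorem~\ref{thm:contractibility-of-twisted-prestack} for the proper Levi $M_R$ at $(\sigma_{M_R}, A_{M_R})$, which holds by hypothesis (when $R = B$ the Levi is a torus and $A_{M_R} = 0$, the trivial case). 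As the $q_R^!$ are natural in $R$, passing to the limit over $(\Par')^\op$ produces an equivalence
$$
\Dmod(\Spr_\Glued^\sigmaA) = \lim_{R \in (\Par')^\op} \Dmod(\Spr_R^\sigmaA) \;\xto{\,\sim\,}\; \lim_{R \in (\Par')^\op} \Dmod(\Spr_{R, M_R}^\sigmaA) = \Dmod(\Spr_{\Glued, \mixed}^\sigmaA),
$$
which one identifies with the pullback along $\Spr_{\Glued, \mixed}^\sigmaA \to \Spr_\Glued^\sigmaA$. The main obstacle is the fiber identification of the second paragraph — pinning down $\phi_2^{-1}(R) \simeq \Tw(\Par_{M_R}^\op)$, the relative decomposition of $\u_P$ along $R$, and the fiberwise nilpotency of $A_{M_R}$; once this picture over $\Spr_R^\sigmaA$ is in place, the remaining steps are formal.
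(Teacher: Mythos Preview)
Your proposal is correct and follows essentially the same route as the paper. The paper reduces the proposition to a lemma asserting that for each $R \in \Par'$ the pullback along $\uscolim{[\PQ] \in \Tw_R} \Spr^{\sigmaA}_{\PQ,\unip} \to \Spr_R^\sigmaA$ is an equivalence on $\fD$-modules, proved by pseudo-properness and the identification of each geometric fiber with $\Spr_{\Glued,\unip}^{\sigma_M,A_M}$ for the Levi $M$ of $R$ (citing \cite[Section~7.4.3]{AG2}); your argument is the same, only more explicit about why the two-stage colimit is legitimate (the coCartesian fibration structure of $\phi_2$) and about the fiber identification (the decomposition of $\u_P$ and the nilpotency of $A_{M_R}$), points the paper leaves implicit or to the reference.
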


\begin{thm} \label{thm:conctractibility of easier gluing}
For $A \neq 0$, the pullback functor $\Vect \to \Dmod(\Spr_\Glued^\sigmaA)$ is an equivalence.
\end{thm}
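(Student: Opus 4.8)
The plan is to follow \cite[Sections 7--8]{AG2} essentially verbatim, the only --- but crucial --- change being that, wherever \loccit{} extracts a homology-level consequence of a geometric collapsing map, we instead extract an equivalence of $\fD$-module categories by invoking \propref{prop:excision} in its full strength (\loccit{} uses only ``half'' of it). The whole package --- this theorem, \propref{prop:from-mixed-to-simpler-gluing}, and \thmref{thm:contractibility-of-twisted-prestack} --- is organized as a single induction on the semisimple rank of $G$. The cases of semisimple rank $\le 1$, as well as the case $A=0$, are immediate: a horizontal section is nonzero iff nowhere zero, so for $A \ne 0$ and $G$ of semisimple rank $1$ the section $A$ is fibrewise regular nilpotent and $\Spr_\Glued^\sigmaA = \Spr_B^\sigmaA \simeq \pt$. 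Thus from now on $A \ne 0$, $G$ has semisimple rank $\ge 2$, and all the statements of this section are assumed known for proper Levi subgroups of $G$.

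Next I would recall the geometric skeleton of the AG2 argument. Since $X$ is connected, the horizontal nilpotent $A$ has a well-defined fibrewise nilpotent type, hence a canonical relative Jacobson--Morozov datum and a canonical parabolic reduction $\sigma_{P(A)}$ of $\sigma$ with $P(A)$ \emph{proper}; using $P(A)$ and the associated Dynkin grading one filters the index poset $\Par'$ and, by parabolic induction, fibres each $\Spr_R^\sigmaA$ over a Springer-type scheme for the Levi $M_R$. Combined with the inductive hypothesis on Levis, this presents the prestack $\uscolim{R \in \Par'}\Spr_R^\sigmaA$ as the outcome of a finite, explicitly described sequence of \emph{collapsing} morphisms of projective schemes, each of which is proper and restricts to an isomorphism over the complement of a closed subscheme. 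Along the way one passes freely between prestack colimits and the corresponding scheme-theoretic unions; this is harmless by the same device used in \propref{prop:pseudo-proper-schemes} and illustrated by the $\PP^1 \vee \PP^1$ remark following the $GL_3$ example, since the comparison maps are pseudo-proper and induce equivalences on $\fD$-modules.

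The departure from \cite{AG2} is then the following. For each collapsing morphism $f\colon Y \to Y'$ above, \loccit{} only records that $f$ is a de Rham homology isomorphism --- which is what yields fully faithfulness of $\Vect \to \Dmod(\Spr_\Glued^\sigmaA)$. I would instead apply \propref{prop:excision} as stated, obtaining an \emph{equivalence}
$$
\Dmod(Y') \ustimes{\Dmod(Y'_0)} \Vect
\xrightarrow{\ \sim\ }
\Dmod(Y) \ustimes{\Dmod(Y_0)} \Vect
$$
between the relative categories attached to the collapsed loci. Threading these equivalences through the filtration --- and using at the bottom that the finest piece $\Spr_B^\sigmaA$, after the collapses coming from the larger parabolics, is contracted to a point exactly as in the $GL_3$ computation --- shows that $\Vect \to \Dmod(\Spr_\Glued^\sigmaA)$ is an equivalence. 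With \propref{prop:from-mixed-to-simpler-gluing} and \cite[Section 7.3.4]{AG2} this closes the induction and gives \thmref{thm:contractibility-of-twisted-prestack}, hence \thmref{mainthm:microlocal} and \thmref{mainthm:strong-ICOHN} via \corref{cor:can do pointwise}.

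The hard part is the bookkeeping in the middle step: one must verify, stage by stage, that AG2's combinatorial induction on $\Par'$ really can be presented as a composition of moves of the shape covered by \propref{prop:excision} --- i.e.\ at each stage the map is proper and an isomorphism over the complement of a \emph{closed} subscheme; the closed loci occurring on the various Springer schemes are mutually compatible with the structure maps of the diagram; and all of this survives the reduction to proper Levis. Once this compatibility is in place there is no new geometric content beyond \cite{AG2}, and the passage from homology-isomorphism to category-equivalence becomes formal.
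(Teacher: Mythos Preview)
Your meta-strategy is correct and matches the paper: the sole upgrade over \cite{AG2} is to invoke \propref{prop:excision} in full, turning each homology isomorphism into an equivalence of $\fD$-module categories. However, you have misidentified the inductive structure, and this is more than bookkeeping. The induction in \cite[Section 8]{AG2} --- and in the paper --- is \emph{not} over the poset $\Par'$; there is no ``finest piece $\Spr_B^\sigmaA$'' being collapsed by larger parabolics. Rather, one stratifies each $\Spr_P^\sigmaA$ by the \emph{relative position} $w \in W' \subseteq W$ of the $P$-reduction with respect to the Jacobson--Morozov parabolic $P_0$ of $A$, and inducts on the length of $w$, showing $\Vect \xto{\simeq} \Dmod(\Spr_\Glued^{\sigmaA, \leq w})$ for each $w$. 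The base case $w=1$ is $\Spr_\Glued^{\sigmaA, \leq 1} \simeq \pt$; the inductive step reduces to $\Vect \xto{\simeq} \Dmod(\Spr_\Glued^{\sigmaA, \leq w}/\Spr_\Glued^{\sigmaA, < w})$.

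Excision enters precisely here, and not where you place it. Using the partition $I = I^0_w \sqcup I^+_w \sqcup I^-_w$, one introduces a subposet $\Par'_w \subseteq \Par'$ whose inclusion admits a right adjoint $P \mapsto \wt P$. The map $\Spr_{\wt P}^{\sigmaA, \leq w} \to \Spr_P^{\sigmaA, \leq w}$ is proper and an isomorphism over the open $w$-stratum by \cite[Lemma 8.2.10(2)]{AG2}, so \propref{prop:excision} yields an equivalence on the quotients by the $<w$ loci; a formal Kan-extension argument (\cite[Lemma 8.4.3]{AG2}) then reduces the colimit from $\Par'$ to $\Par'_w$, and \cite[Section 8.5.5]{AG2} shows the latter is a point. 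Thus the collapsing maps to which excision applies are $\Spr_{\wt P}^{\sigmaA, \leq w} \to \Spr_P^{\sigmaA, \leq w}$, indexed by pairs $(w,P)$ --- not maps between Springer fibers for nested parabolics. Your sketch does not locate these maps, and the alternative picture you describe (a filtration of $\Par'$ collapsing upward from $B$) does not correspond to any argument in \cite{AG2}.
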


\begin{rem}
In both statements above, we have imposed the condition $A \neq 0$. This restriction is harmless in view of Example \ref{example: A=0}, which takes care of the case $A=0$.
\end{rem}

\sssec{}

The proof of Theorem \ref{thm:conctractibility of easier gluing} is deferred to Section \ref{ssec:weyl combinatorics}. On the other hand, Proposition \ref{prop:from-mixed-to-simpler-gluing} is an immediate consequence of the following lemma.

\begin{lem}
For any $R \in \Par'$, consider the subcategory $\Tw_R \subseteq \Tw$ consisting of pairs $[\PQ]$ with $P \subseteq R$. Then pullback along the natural functor
$$
\uscolim{[\PQ] \in \Tw_R} 
\Spr^{\sigmaA}_{\PQ, \unip}
\longto
\Spr_R^\sigmaA
$$
yields an equivalence at the level of $\fD$-modules.
\end{lem}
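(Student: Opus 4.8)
The plan is to apply \propref{prop:pseudo-proper-schemes} to the natural map
$$
\mu_R\ :\ \uscolim{[\PQ]\in\Tw_R}\Spr^\sigmaA_{\PQ,\unip}\ \longto\ \Spr^\sigmaA_R,
$$
thereby reducing the lemma to a pointwise contractibility statement which we will recognize as \thmref{thm:contractibility-of-twisted-prestack} for the Levi quotient of $R$ (available by induction on semisimple rank, exactly as in \propref{prop:from-mixed-to-simpler-gluing}). First I would check that $\mu_R$ is pseudo-proper: each $\Spr^\sigmaA_{\PQ,\unip}$ is a scheme, and the structure map $\Spr^\sigmaA_{\PQ,\unip}\to\Spr^\sigmaA_R$, which sends a $Q$-reduction $\sigma_Q$ to the $R$-reduction it induces (well-defined since $\u_P\subseteq\fr$), is proper — it is a closed subscheme of a relative partial-flag scheme over $\Spr^\sigmaA_R$. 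By \propref{prop:pseudo-proper-schemes} it then suffices to show that for every $\kk$-point $\sigma_R\in\Spr^\sigmaA_R$ the fiber $\mu_R^{-1}(\sigma_R)$ is cohomologically contractible, i.e.\ that $\Vect\to\Dmod\bigt{\mu_R^{-1}(\sigma_R)}$ is an equivalence. Since colimits of prestacks are universal, $\mu_R^{-1}(\sigma_R)\simeq\uscolim{[\PQ]\in\Tw_R}\bigt{\Spr^\sigmaA_{\PQ,\unip}\ustimes{\Spr^\sigmaA_R}\{\sigma_R\}}$.

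The main point is to rewrite this colimit in terms of the Levi. Fix $\sigma_R$, let $M$ be the Levi quotient of $R$, let $\sigma_M$ be the induced $M$-local system, and let $\bar A\in H^0(X_\dR,\fm_{\sigma_M})$ be the image of $A$ under the surjection $\fr_{\sigma_R}\twoheadrightarrow\fm_{\sigma_M}$; note that $\bar A$ is again pointwise nilpotent, so $(\sigma_M,\bar A)$ is an Arthur parameter for $M$. For $B\subseteq Q\subseteq P\subseteq R$ one has $U_R\subseteq Q$, so refining $\sigma_R$ to a $Q$-reduction is the same datum as reducing $\sigma_M$ to the parabolic $Q\cap M$ of $M$; moreover $(\u_R)_{\sigma_R}\subseteq(\u_P)_{\sigma_Q}$ with $(\u_P)_{\sigma_Q}/(\u_R)_{\sigma_R}\cong(\u_{P\cap M})_{\sigma_{Q\cap M}}$, so $(\u_P)_{\sigma_Q}$ is the preimage of $(\u_{P\cap M})_{\sigma_{Q\cap M}}$ under $\fr_{\sigma_R}\twoheadrightarrow\fm_{\sigma_M}$, and hence the condition $A\in H^0((\u_P)_{\sigma_Q})$ is equivalent to $\bar A\in H^0((\u_{P\cap M})_{\sigma_{Q\cap M}})$. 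This produces, naturally in $[\PQ]$, an isomorphism $\Spr^\sigmaA_{\PQ,\unip}\ustimes{\Spr^\sigmaA_R}\{\sigma_R\}\simeq\Spr^{\sigma_M,\bar A}_{(P\cap M)\supseteq(Q\cap M),\unip}$.

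Now $P\mapsto P\cap M$ is an isomorphism of posets from $\{P\in\Par:P\subseteq R\}$ onto the poset of all standard parabolics of $M$, so it induces an isomorphism $\Tw_R\xto{\ \sim\ }\Tw(\Par(M)^\op)$. Combining this with the previous paragraph,
$$
\mu_R^{-1}(\sigma_R)\ \simeq\ \uscolim{[P'\supseteq Q']\in\Tw(\Par(M)^\op)}\Spr^{\sigma_M,\bar A}_{P'\supseteq Q',\unip}\ =\ \Spr^{\sigma_M,\bar A}_{\Glued,\unip},
$$
the glued Springer prestack of the group $M$ attached to $(\sigma_M,\bar A)$. Since $R$ is proper, $M$ is a proper Levi of $G$, so \thmref{thm:contractibility-of-twisted-prestack} applies to $M$ and shows that $\Vect\to\Dmod\bigt{\Spr^{\sigma_M,\bar A}_{\Glued,\unip}}$ is an equivalence — which is exactly the contractibility of $\mu_R^{-1}(\sigma_R)$ required above. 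With \propref{prop:pseudo-proper-schemes} this proves the lemma.

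I expect the main obstacle to be the fiberwise identification in the second paragraph: one has to match the conditions \virg{$A$ lies in the nilradical $\u_P$} with their Levi counterparts \virg{$\bar A$ lies in $\u_{P\cap M}$}, and check that these identifications are compatible with all the transition maps of the poset $\Tw_R$, so that the colimit of fibers is genuinely the glued Springer prestack $\Spr^{\sigma_M,\bar A}_{\Glued,\unip}$ and the inductive hypothesis can be invoked. The other ingredients — pseudo-properness of $\mu_R$, universality of colimits of prestacks, and the reduction via \propref{prop:pseudo-proper-schemes} — are routine.
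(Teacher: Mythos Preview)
Your proposal is correct and follows essentially the same route as the paper: reduce via \propref{prop:pseudo-proper-schemes} to fibers over points $\sigma_R\in\Spr_R^\sigmaA$, and identify each fiber with the glued Springer prestack $\Spr_{\Glued,\unip}^{\sigma_M,A_M}$ for the Levi $M$ of $R$, whose contractibility is available by the inductive hypothesis. The paper simply cites \cite[Section 7.4.3]{AG2} for the fiberwise identification you spell out in your second paragraph, but the argument is the same.
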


\begin{proof}
Since the map in question is pseudo-proper, Proposition \ref{prop:pseudo-proper-schemes} ensures that it suffices to check the statement at the level of field-valued points of $\Spr_R^\sigmaA$. That is, for $\sigma_R \in \Spr_R^\sigmaA$, we need to show that the pullback 
$$
\Vect \longto
\Dmod
\Bigt{
\uscolim{[\PQ] \in \Tw_R} 
\Spr^{\sigmaA}_{\PQ, \unip}
\ustimes{
\Spr_R^\sigmaA
}
\sigma_R
}
$$
is an equivalence. We will use the assumption of Proposition \ref{prop:from-mixed-to-simpler-gluing} on $M$, the Levi quotient of $R$. Let $\sigma_M$ to be the $M$-local system induced from $\sigma_R$ and $A_M$ the projection of $A$ to a section of $\fm_{\sigma_M}$. As in \cite[Section 7.4.3]{AG2}, we observe that there is a tautological isomorphism
$$
\uscolim{[\PQ] \in \Tw_R} 
\Spr^{\sigmaA}_{\PQ, \unip}
\ustimes{
\Spr_R^\sigmaA
}
\sigma_R
\simeq
\Spr_{\Glued, \unip}^{\sigma_M,A_M}
$$
of prestacks.
\end{proof}

\ssec{Weyl combinatorics} \label{ssec:weyl combinatorics}

It remains to prove Theorem \ref{thm:conctractibility of easier gluing}, a task that will keep us busy until the end of the paper. We need some Weyl combinatorics: we refer to \cite[Sections 8.1-8.3]{AG2} for details and more on the notation.

\sssec{}

Let $W$ denote the Weyl group and $G$ and $I$ the set of nodes of the Dynkin diagram.
Let $P_0$ be the standard parabolic associated to $A$ by the Jacobson-Morozov theorem: this makes sense because $A \neq 0$ by assumption. Denoting by $J_0 \subset I$ the subset corresponding to $P_0$, define
$$
W' :=
\{
w \in W \, : \, 
w^{-1}(J_0) \subseteq \mathsf{R}^{+}
\}.
$$
By \cite[Section 8.2.1]{AG2}, $W'$ contains a unique maximal element, denoted by $w_0'$. 

\sssec{}

Elements of $W'$ measure the relative position of $P_0$ and other parabolics. 
In particular, each $\Spr_P^\sigmaA$ is stratified by elements of $W'$. Thus, we have closed embeddings
$$
\Spr_P^{\sigma, A, < w}
\hto
\Spr_P^{\sigma, A, \leq w}
\hto
\Spr_P^\sigmaA
$$
that are moreover functorial in $P$.
Hence, as in \cite[Section 8.3.3]{AG2}, the $W'$-stratification on each $\Spr_P^\sigmaA$  induces a stratification of $\Spr_\Glued^{\sigmaA}$ by letting
$$
\Spr_\Glued^{\sigma, A, < w} 
:= 
\uscolim {P \in \Par'}  \Spr_P^{\sigma, A, < w}, 
\hspace{.4cm}
\Spr_\Glued^{\sigma, A, \leq w} 
:= 
\uscolim {P \in \Par'}  \Spr_P^{\sigma, A, \leq w}.
$$

\sssec{}

Since $W'$ contains a maximal element $w_0'$, the following proposition is enough to prove Theorem \ref{thm:conctractibility of easier gluing}.

\begin{prop}
The pullback functor
$$
\Vect \longto \Dmod (\Spr_\Glued^{\sigma, A, \leq w} )
$$
is an equivalence for any $w \in W'$. 
\end{prop}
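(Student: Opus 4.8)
The plan is to prove this by induction on $w \in W'$ with respect to the Bruhat order, using the $\fD$-module excision result of Proposition \ref{prop:excision} at each step to pass from $\Spr_\Glued^{\sigma, A, < w}$ to $\Spr_\Glued^{\sigma, A, \leq w}$. The base case is $w = e$ (the identity element), for which $\Spr_P^{\sigma, A, \leq e}$ is a single point for every $P \in \Par'$ (or empty, but the minimal stratum is nonempty precisely for those $P$ compatible with $P_0$), and the colimit $\Spr_\Glued^{\sigma, A, \leq e}$ over $\Par'$ is easily seen to be contractible — in fact a point — so $\Vect \to \Dmod(\Spr_\Glued^{\sigma, A, \leq e})$ is an equivalence.

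For the inductive step, fix $w \in W'$ and assume the statement holds for all $w' < w$; in particular $\Vect \to \Dmod(\Spr_\Glued^{\sigma, A, < w})$ is an equivalence, since $\Spr_\Glued^{\sigma, A, < w} = \uscolim{w' < w} \Spr_\Glued^{\sigma, A, \leq w'}$ and a filtered-type colimit of contractibles along closed embeddings is contractible (this is the standard stratification argument, carried out level by level over $\Par'$ exactly as in \cite[Section 8.3]{AG2}). Now I would write, for each $P \in \Par'$, the prestack $\Spr_P^{\sigma, A, \leq w}$ as containing $\Spr_P^{\sigma, A, < w}$ as a closed subscheme with open complement the locally closed stratum $\Spr_P^{\sigma, A, = w}$; the key geometric input — available from \cite[Section 8.2]{AG2} — is that the open stratum $\Spr_P^{\sigma, A, = w}$ is either empty or an affine-space bundle over a fixed base independent of $P$ (once $P$ ranges over those parabolics for which the stratum is nonempty). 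Thus passing to $\fD$-modules, $\Dmod(\Spr_\Glued^{\sigma, A, \leq w})$ sits in an exact sequence with $\Dmod(\Spr_\Glued^{\sigma, A, < w})$ on one side and $\Dmod$ of the glued open stratum on the other, and I must show the latter is also contractible and that the gluing is trivial. The excision proposition is exactly the tool to recognize the resulting fiber product of DG categories as what it should be: applying Proposition \ref{prop:excision} (now using its full strength, not just half) with $Y = \Spr_P^{\sigma, A, \leq w}$, $Y_0 = \Spr_P^{\sigma, A, < w}$, shows that $\Dmod(\Spr_P^{\sigma, A, \leq w}) \times_{\Dmod(\Spr_P^{\sigma, A, < w})} \Vect$ only depends on the open stratum, and then one takes the colimit over $P \in \Par'$.

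The main obstacle, and the place where the full force of excision is needed, is controlling the \emph{gluing data} of the open strata as $P$ varies over $\Par'$: naively $\Spr_\Glued^{\sigma, A, \leq w}$ is a colimit over $\Par'$ of schemes that are not themselves contractible, so one cannot conclude stratum by stratum in the pointwise sense. What saves the argument is that, after applying excision, the relevant DG category becomes $\Dmod$ of the prestack $\uscolim{P \in \Par'} \Spr_P^{\sigma, A, = w}$ built only from the open strata, and this colimit — by the combinatorics of $W'$ recorded in \cite[Sections 8.1-8.3]{AG2}, specifically the fact that the poset of $P \in \Par'$ for which $\Spr_P^{\sigma, A, = w} \neq \emptyset$ has a terminal (or initial) object, or is otherwise contractible after the affine-bundle structure is factored out — is itself contractible. (This is the point where the subregular $GL_3$ example in the text is instructive: the scheme pushout $\PP^1 \vee \PP^1$ differs from the prestack pushout, but on $\fD$-modules they agree, and excision is what makes the bookkeeping uniform.) Concretely, I would reduce to showing that for each fixed $w$, pullback $\Vect \to \Dmod(\uscolim{P \in \Par'} \Spr_P^{\sigma, A, = w})$ is an equivalence, deduce the $\leq w$ statement by gluing this against the already-known $< w$ statement via a Mayer–Vietoris / recollement square made exact by Proposition \ref{prop:excision}, and close the induction. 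Combined with the existence of the maximal element $w_0' \in W'$, taking $w = w_0'$ yields $\Vect \xto{\simeq} \Dmod(\Spr_\Glued^{\sigma,A})$, which is Theorem \ref{thm:conctractibility of easier gluing}.
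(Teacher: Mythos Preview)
Your overall induction scheme matches the paper's: induct on $w$ in the Bruhat order, handle the base case $w=1$ by the observation (from \cite[Section 8.4.1]{AG2}) that $\Spr_\Glued^{\sigma,A,\leq 1}\simeq\pt$, and in the inductive step reduce to analyzing the quotient prestack $\Spr_\Glued^{\sigma,A,\leq w}/\Spr_\Glued^{\sigma,A,<w}$. However, the way you invoke excision and the combinatorial step you rely on are both off.

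First, your application of Proposition~\ref{prop:excision} is not well-posed. That proposition compares two schemes $Y \to Y'$ via a proper map that is an isomorphism on open complements; it is not a statement about a single pair $(Y,Y_0)$. You write ``applying Proposition~\ref{prop:excision} with $Y=\Spr_P^{\sigma,A,\leq w}$, $Y_0=\Spr_P^{\sigma,A,<w}$'' without specifying $Y'$, so what you are really using there is just the recollement exact sequence, not excision. In the paper, excision is used for a genuinely different purpose: for each $P\in\Par'$ one compares $\Spr_{\wt P}^{\sigma,A,\leq w}\to\Spr_P^{\sigma,A,\leq w}$ where $\wt P\subseteq P$ is the parabolic with $J_{\wt P}=J_P\setminus I_w^+$. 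This map is proper and, by \cite[Lemma~8.2.10(2)]{AG2}, is an isomorphism on the open $=w$ strata. Excision then yields that the functor $F(\wt P)\to F(P)$ is an equivalence, where $F(P)=\Dmod(\Spr_P^{\sigma,A,\leq w}/\Spr_P^{\sigma,A,<w})$.

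Second, and relatedly, the combinatorial claim you need --- that the poset of $P$ with $\Spr_P^{\sigma,A,=w}\neq\emptyset$ has a terminal object, or that the glued open stratum is contractible --- is not what is available. What the paper does instead is use the excision step above to show that the colimit $\colim_{P\in\Par'}F(P)$ can be restricted to the subposet $\Par'_w=\{P:J_P\subseteq I_w^0\sqcup I_w^-\}$ without loss: the inclusion $\Par'_w\hookrightarrow\Par'$ has a right adjoint $P\mapsto\wt P$, and the counit $F(\wt P)\to F(P)$ is the equivalence just established. Only then does one invoke \cite[Section~8.5.5]{AG2} to see that $\colim_{P\in\Par'_w}\Spr_P^{\sigma,A,\leq w}/\Spr_P^{\sigma,A,<w}\simeq\pt$ (with the case $w=w_0'$ handled separately by \cite[Section~8.5.6]{AG2}). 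Your proposal to work directly with the open strata $\Spr_P^{\sigma,A,=w}$ over all of $\Par'$ runs into the problem that these are non-proper, the transition maps between them are not controlled, and the contractibility of their colimit is not something \cite{AG2} supplies.
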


\begin{proof}
We proceed by induction on the length of $w$. The base case of $w =1$ is settled by \cite[Section~8.4.1]{AG2}, where it is proven that $\Spr_\Glued^{\sigma, A, \leq 1} $ is isomorphic to $\pt$.

\medskip

Next, let $w \neq 1$ be fixed, and assume that the assertion holds for any $w' < w$. Observe first that the pullback functor
$$
\Vect \longto \Dmod(\Spr_\Glued^{\sigma, A, < w})
$$
is an equivalence. Indeed, we tautologically have
$$
\Spr_\Glued^{\sigma, A, < w}
\simeq
\uscolim{w' < w}
\Spr_\Glued^{\sigma, A, \leq w'},
$$
so our claim follows by the induction hypothesis together with the fact that the poset indexing the colimit is contractible (as it contains the minimum element $1$).

\medskip

Consider now the prestacks
$$
\Spr_P^{\sigma, A, \leq w}/\Spr_P^{\sigma, A, < w} 
:=
\Spr_P^{\sigma, A, \leq w} 
\underset{\Spr_P^{\sigma, A, < w} }\sqcup \pt
$$
$$
\Spr_{\Glued}^{\sigma, A, \leq w}/\Spr_{\Glued}^{\sigma, A, < w} 
:=
\Spr_{\Glued}^{\sigma, A, \leq w} 
\underset{\Spr_{\Glued}^{\sigma, A, < w} }\sqcup \pt
$$
and the resulting fiber square
\begin{equation} 
\nonumber
\begin{tikzpicture}[scale=1.5]
\node (00) at (0,0) {$  \Vect $};
\node (10) at (3.2,0) {$  \Dmod(\Spr_{\Glued}^{\sigma, A, < w}) $};
\node (01) at (0,1) {$ \Dmod \bigt{ \Spr_\Glued^{\sigma, A, \leq w}/\Spr_\Glued^{\sigma, A, < w} }$};
\node (11) at (3.2,1) {$  \Dmod(\Spr_\Glued^{\sigma, A, \leq w} )$};
\path[->,font=\scriptsize,>=angle 90]
(00.east) edge node[above] {pullback} (10.west); 
\path[->,font=\scriptsize,>=angle 90]
(01.east) edge node[above] {pullback} (11.west); 
\path[->,font=\scriptsize,>=angle 90]
(01.south) edge node[right] {pullback} (00.north);
\path[->,font=\scriptsize,>=angle 90]
(11.south) edge node[right] {pullback} (10.north);
\end{tikzpicture}
\end{equation}
of DG categories. By the above observation, we know that the bottom horizontal arrow is an equivalence, hence so is the top one.
Thus, it remains to show that the left vertical arrow is an equivalence, which is the content of the next lemma.
\end{proof}

\begin{lem} \label{lem:technical-weyl-groups}
For $w \neq 1$, the pullback functor
$$
\Vect 
\longto 
\Dmod \bigt{ \Spr_\Glued^{\sigma, A, \leq w}/\Spr_\Glued^{\sigma, A, < w} }
$$
is an equivalence.
\end{lem}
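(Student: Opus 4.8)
The plan is to follow \cite[Sections 8.1--8.4]{AG2} very closely; the one genuinely new ingredient is that we use the full, two-sided force of the excision result \propref{prop:excision}, whereas \loccit\ only needs its localization half. First I would unwind the colimit: since the poset $\Par'$ has $B$ as initial object, the structure map $\uscolim{P \in \Par'}\pt \to \pt$ induces an equivalence on $\fD$-modules, and distributing the pushout defining $\Spr_\Glued^{\sigma, A, \leq w}/\Spr_\Glued^{\sigma, A, < w}$ over the colimit presentations $\Spr_\Glued^{\sigma, A, \leq w} = \uscolim{P \in \Par'}\Spr_P^{\sigma, A, \leq w}$ and $\Spr_\Glued^{\sigma, A, < w} = \uscolim{P \in \Par'}\Spr_P^{\sigma, A, < w}$ yields
$$
\Dmod\bigt{\Spr_\Glued^{\sigma, A, \leq w}/\Spr_\Glued^{\sigma, A, < w}}
\simeq
\lim_{P \in (\Par')^{\op}}
\Dmod\bigt{\Spr_P^{\sigma, A, \leq w}/\Spr_P^{\sigma, A, < w}},
$$
compatibly with the constant-sheaf functors out of $\Vect$. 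By the exact sequence exhibited in the proof of \propref{prop:excision}, each term on the right is a recollement of $\Vect$ by $\Dmod$ of the open stratum $\Spr_P^{\sigma, A, =w} := \Spr_P^{\sigma, A, \leq w} - \Spr_P^{\sigma, A, < w}$.

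Next I would reduce to affine cells and run the Weyl computation. Here one invokes the $W'$-combinatorics of \cite[Sections 8.1--8.3]{AG2}, which records the sub-poset of $P \in \Par'$ with $\Spr_P^{\sigma, A, =w} \neq \emptyset$ and describes these strata and their closures inside $\Spr_P^{\sigma, A, \leq w}$ in Schubert-type terms. Using \propref{prop:excision} --- which permits replacing the entire diagram $P \squigto (\Spr_P^{\sigma, A, \leq w}, \Spr_P^{\sigma, A, < w})$ of closed pairs by any proper-over-it diagram inducing isomorphisms on the open strata, without disturbing the limit above --- I would pass to a diagram of iterated projective-space bundles, so that the limit is computed by $\Dmod$ of an explicit colimit of projective spaces with the boundary collapsed to a single point. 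The hypothesis $w \neq 1$ then supplies, via a right descent $s$ of $w$, one uniform extra free affine coordinate, and one concludes, exactly as in \cite[Section 8.4]{AG2}, that this colimit prestack is cohomologically contractible, whence the assertion.

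The heart of the matter is this last reduction: organizing the diagram of strata-closures over the relevant sub-poset of $\Par'$ and checking that the resulting colimit genuinely collapses to a point. The place where the present argument improves on \cite{AG2} is that there one uses only the localization functor onto $\fD$-modules of the open stratum, which suffices for \emph{fully faithfulness} of the pullback; promoting this to an \emph{equivalence} forces one to track the complementary ``closed'' direction of each recollement compatibly along the diagram --- precisely what the two-sided statement of \propref{prop:excision} delivers, but the one-sided version used in \cite{AG2} does not. Verifying this compatibility, together with the explicit colimit computation of the previous paragraph, is the technical core of the proof.
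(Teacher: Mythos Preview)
Your opening move --- rewriting $\Dmod$ of the glued quotient as the limit over $(\Par')^\op$ of the individual $\Dmod\bigt{\Spr_P^{\sigma,A,\leq w}/\Spr_P^{\sigma,A,<w}}$ --- is correct and matches the paper. You also correctly identify that the two-sided excision of \propref{prop:excision} is the new ingredient relative to \cite{AG2}. But from there on the proposal drifts from the actual mechanism, and the second half is too vague to constitute a proof.

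The paper does \emph{not} replace the diagram of closed pairs by ``iterated projective-space bundles'' and then compute an explicit colimit. Instead, the argument is structural. First, the case $w = w_0'$ is handled separately (\cite[Section 8.5.6]{AG2} shows the quotient prestack is already a point), a case split you omit. For $w \notin \{1, w_0'\}$, one introduces the sub-poset $\Par'_w = \{P : J_P \subseteq I^0_w \sqcup I^-_w\}$; the inclusion $\phi: \Par'_w \hookrightarrow \Par'$ admits a \emph{right adjoint} $\psi: P \mapsto \wt P$ (with $J_{\wt P} = J_P \setminus I_w^+$). The cofinality criterion \cite[Lemma 8.4.3]{AG2} says that $\colim_{\Par'_w} F \to \colim_{\Par'} F$ is an equivalence once each counit map $F(\wt P) \to F(P)$ is an equivalence. \emph{This} is where excision enters: the map $\Spr_{\wt P}^{\sigma,A,\leq w} \to \Spr_P^{\sigma,A,\leq w}$ is proper and, by \cite[Lemma 8.2.10(2)]{AG2}, an isomorphism on the open $w$-strata, so \propref{prop:excision} gives the required equivalence on $\Dmod$ of the quotients. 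Finally, over the reduced poset $\Par'_w$ one invokes \cite[Section 8.5.5]{AG2}, which shows the colimit prestack is a point. Your ``right descent $s$ supplying a uniform affine coordinate'' does not correspond to any step here; the actual reduction is via the adjunction $(\phi,\psi)$ and the specific geometry of \cite[Lemma 8.2.10]{AG2}, none of which your proposal invokes.
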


\begin{proof}
We proceed in 5 steps.

\sssec*{Step 1.}
By  \cite[Section 8.5.6]{AG2}, the prestack
$$
\Spr_\Glued^{\sigma, A, \leq w_0'}/\Spr_\Glued^{\sigma, A, < w_0'} 
$$
is isomorphic to $\pt$. Hence, from now on we assume that $w \neq w_0'$.

\sssec*{Step 2.}

Tautologically,
$$
\Dmod \bigt{ \Spr_\Glued^{\sigma, A, \leq w}/\Spr_\Glued^{\sigma, A, < w} }
\simeq
\lim_{P \in (\Par')^\op}
\Dmod \bigt{ \Spr_P^{\sigma, A, \leq w}/\Spr_P^{\sigma, A, < w} }.
$$
Since the pullback maps forming this limit admit left adjoints, we also have
$$
\Dmod \bigt{ \Spr_\Glued^{\sigma, A, \leq w}/\Spr_\Glued^{\sigma, A, < w} }
\simeq
\uscolim{P \in \Par'} \; 
F
$$
where $F: \Par' \to \DGCat$ denotes the functor
$$
P \squigto \Dmod(\Spr_P^{\sigma, A, \leq w}/\Spr_P^{\sigma, A, < w} )
$$
induced by those left adjoints.

\sssec*{Step 3.}

Recall the partition $I = I^0_w \sqcup I^+_w \sqcup I^-_w$, with $I^0_w := I \cap w^{-1}(\mathsf{R}_{J_0})$ and $I^{\pm}_w := I \cap w^{-1}(\mathsf{R}^\pm - \mathsf{R}_{J_0})$. Let
$$
\Par'_w := \{P \in \Par' \; : \; J_P \subseteq  I^0_w \sqcup I^-_w   \}.
$$
The inclusion $\phi: \Par'_w \hto \Par'$ admits a right adjoint $\psi$ that sends $P \squigto \wt P$, where $\wt P$ is the standard parabolic with $J_{\wt P} = J_{P_0} - I_w^+$.

\medskip

According to \cite[Lemma 8.4.3]{AG2}, the tautological functor
\begin{equation} \label{eqn:techincal-counit}
\colim F \circ \phi
\longto
\colim F
\end{equation}
is an equivalence provided that, for any $P \in \Par'$, the natural arrow
$$
F \circ \phi \circ \psi (P) \longto
F(P)
$$
is an equivalence of DG categories. We will prove this statement in the next step.

\sssec*{Step 4.}

We need to show that, for $w \in W'-\{1, w_0'\}$, the pullback functor
$$
f^!:
\Dmod
\bigt{
\Spr_P^{\sigma, A, \leq w}/\Spr_P^{\sigma, A, < w}
}
\longto
\Dmod
\bigt{
\Spr_{\wt P}^{\sigma, A, \leq w}/\Spr_{\wt P}^{\sigma, A, < w}
}
$$ 
along the natural map
$$
f: 
\Spr_{\wt P}^{\sigma, A, \leq w}/\Spr_{\wt P}^{\sigma, A, < w}
\longto
\Spr_P^{\sigma, A, \leq w}/\Spr_P^{\sigma, A, < w}
$$
is an equivalence.
We will prove this by excision, that is, by invoking Proposition \ref{prop:excision}. First off, note that the map
$$
\wt f: 
\Spr_{\wt P}^{\sigma, A, \leq w}
\longto
\Spr_P^{\sigma, A, \leq w}
$$
inducing $f$ is proper. It remains to observe that $\wt f$ restricts to an isomorphism 
$$
\bigt{ 
\Spr_{\wt P}^{\sigma, A, \leq w} - \Spr_{\wt P}^{\sigma, A, < w}
}
\longto
\bigt{ 
\Spr_P^{\sigma, A, \leq w} - \Spr_P^{\sigma, A, < w}
}
$$
on the open complements. This follows from \cite[Lemma 8.2.10(2)]{AG2}, which states that the scheme of $P$-flags in position $w$ with a given $P_0$-flag is isomorphic to the scheme of $\wt P$-flags in position $w$ with the given $P_0$-flag.

\sssec*{Step 5.}

The equivalence (\ref{eqn:techincal-counit}) says (by passing to right adjoints) that the pullback functor
$$
\Dmod \bigt{ \Spr_\Glued^{\sigma, A, \leq w}/\Spr_\Glued^{\sigma, A, < w} }
\longto
\Dmod \bigt{ 
\uscolim{P \in \Par_w'} \; 
\Spr_P^{\sigma, A, \leq w}/\Spr_P^{\sigma, A, < w} }
$$
is an equivalence. To finish, we invoke \cite[Section 8.5.5]{AG2}, where it is proven that 
the prestack
$$
\uscolim{P \in \Par_w'} \; 
\Spr_P^{\sigma, A, \leq w}/\Spr_P^{\sigma, A, < w}
$$
is isomorphic to $\pt$.
\end{proof}


\end{document}